\newtheorem{prop}{Proposition}
\newtheorem{lem}{Lemma}
\newtheorem*{lemn}{Lemma}
\newtheorem{thm}{Theorem}
\newtheorem*{thmn}{Theorem}
\newtheorem{cor}{Corollary}
\theoremstyle{remark}
\newtheorem{Example}{Example}
\newtheorem*{examplen}{Example 1 continued}
\newtheorem{remark}{Remark}
\newcommand{\norm}[1]{\|#1\|}
\DeclareMathOperator*{\argmin}{argmin}
\DeclareMathOperator*{\spec}{Sp}
\def\E{\mathbb{E}}
\def\pen{{\mathrm{pen}}}
\def\1{\mathds{1}}
\begin{document}
\title[Estimation of the conditional intensity]{Adaptive estimation of
  the conditional intensity of marker-dependent counting processes}
\author[F. Comte, S. Ga\"iffas \& A. Guilloux]{F. Comte$^{(1)}$, S. Ga\"iffas$^{(2)}$ \& A. Guilloux$^{(3)}$}
\thanks{\noindent $^{(1)}$ MAP5, University
  Paris Descartes, France. email: fabienne.comte@parisdescartes.fr,\\
  $^{(2)}$ LSTA University Pierre et Marie Curie, France. email:
  stephane.gaiffas@upmc.fr,\\ $^{(3)}$ LSTA University Pierre et Marie
  Curie, France. email: agathe.guilloux@upmc.fr}

\begin{abstract}
  We propose in this work an original estimator of the conditional
  intensity of a marker-dependent counting process, that is, a
  counting process with covariates.  We use model selection methods
  and provide a non asymptotic bound for the risk of our estimator on
  a compact set. We show that our estimator reaches automatically a
  convergence rate over a functional class with a given (unknown)
  anisotropic regularity. Then, we prove a lower bound which
  establishes that this rate is optimal. Lastly, we provide a short
  illustration of the way the estimator works in the context of
  conditional hazard estimation.
\end{abstract}
\maketitle

\begin{center} \today \end{center}

\noindent {\it AMS (2000) subject classification}. 62N02, 62G05.

\noindent {\bf Keywords}. Marker-dependent counting
process. Conditional intensity. Model selection. Adaptive
estimation. Minimax and Nonparametric methods. Censored data.
Conditional hazard function.
\section{Introduction}
\label{sec:intro}

As counting processes can model a great diversity of observations,
especially in medicine, actuarial science or economics, their
statistical inference has received a continuous attention since half a
century - see \cite{ABGK} for the most detailed presentation on the
subject. In this paper, we propose a new strategy, based on model
selection, for the inference for counting processes in presence of
covariates. The model considered can be described as follows.

Let $(\Omega, \mathcal F, \mathbb P)$ be a probability space and
$(\mathcal F_t)_{t \geq 0}$ a filtration satisfying the usual
conditions. Let $N$ be a marker-dependent counting process, with
compensator $\Lambda$ with respect to $(\mathcal F_t)_{t \geq 0}$,
such that $N-\Lambda=M$, where $M$ is a $(\mathcal F_t)_{t \geq
  0}$-martingale. We assume that $N$ is a marker-dependent counting
process satisfying the Aalen multiplicative intensity model in the
sense that :
\begin{eqnarray}\label{eq:aalen}
\Lambda(t)=\int_0^t \alpha(X,z) Y(z)dz, \text{ for all } t \geq0
\end{eqnarray}
where  $X$ is a vector of covariates in $\mathbb R^d$ which is
$\mathcal F_0$-measurable, the process $Y$ is nonnegative and
predictable and $\alpha$ is an unknown deterministic function called
intensity.

The purpose of this paper is to estimate the intensity function
$\alpha$ on the basis of the observation of a $n$-sample
$(X_i,N^i(z),Y^i(z), z\leq \tau)$ for $i=1,\dots,n$, where $\tau <
+\infty$.

There are many examples, crucial in practice, which fulfill this
model. For the seek of conciseness, we restrict our presentation to
the three following ones.

\begin{Example}[Regression model for right-censored data]\label{regcens}
  Let $T$ be a nonnegative random variable (r.v.) and $X$ a vector of
  covariates in ${\mathbb R}^d$, with respective cumulative
  distribution functions (c.d.f.) $F_T$ and $F_X$. We consider in
  addition that $T$ can be censored. We introduce the nonnegative
  r.v. $C$, with c.d.f. $G$, such that the observable r.v.  are $Z=T
  \wedge C$, $\delta=\1(T \leq C)$ and $X$. We assume that:
\begin{eqnarray*}
(\mathcal C): \;T  \text{ and } C \text{ are independent conditionally to } X.
\end{eqnarray*}
In this case, the processes to consider (see e.g. \cite{ABGK}) are
given, for $i=1,\dots,n$ and $z \geq 0$, by:
\begin{eqnarray*}
  N^i (z) = \1(Z_i \leq z, \delta_i=1) \text{ and } Y^i (z) =
  \1( Z_i \geq z ).
\end{eqnarray*}
The unknown intensity function $\alpha$ to be estimated is the
conditional hazard rate of the r.v. $T$ given $X=x$ defined, for all
$z>0$ by:
\begin{eqnarray*}
\alpha(x, z)=\alpha_{T|X}(x,z) = \frac{ f_{T|X}(x,z)}{1-F_{T|X}(x,z)},
\end{eqnarray*}
where $f_{T|X}$ and $F_{T|X}$ are respectively the conditional
probability density function (p.d.f.) and the conditional c.d.f. of
$Y$ given $X$.

Nonparametric estimation of the hazard rate in presence of
covariates was initiated by \cite{Beran}. \cite{STU}, \cite{DAB87},
\cite{McKeague} and \cite{LD} extended his results. Many authors
have considered semiparametric estimation of the hazard rate,
beginning with \cite{Cox}, see \cite{ABGK} for a review of the
enormous literature on semiparametric models. We refer to
\cite{Huang} and \cite{LNVdG} for some recent developments.

As  far as we know, adaptive nonparametric estimation for censored
data in presence of covariates has only been considered in
\cite{BCL}, who constructed an optimal adaptive estimator of the
conditional density.\end{Example}

\begin{Example}[Cox processes]
  Let $\eta^i$, for $i=1,\dots,n$, be a Cox process (see~\cite{karr})
  on $\mathbb R_+$ with random mean-measure $\Lambda^i$ given by :
\begin{eqnarray*}
\Lambda^i(t) = \int_0^t \alpha(X_i,z) dz,
\end{eqnarray*}
where $X_i$ is a vector of covariates in $\mathbb R^d$. In this
context the predictable process $Y$ of Equation~\eqref{eq:aalen}
constantly equals~1. As a consequence, these processes can be seen as
generalizations of nonhomogeneous Poisson processes on $\mathbb R_+$
with random intensities. This is a particular case of longitudinal
data, see e.g. Example VII.2.15 in \cite{ABGK}. The nonparametric
estimation of the intensity of Poisson processes without covariates
has been considered in several papers. We refer to \cite{Patricia1}
and \cite{baraud-2008} for the adaptive estimation of the intensity of
nonhomogeneous Poisson processes in general spaces.
\end{Example}

\begin{Example}[Regression model for transition intensities of Markov
  processes]
  Consider a $n$-sample of nonhomogeneous time-continuous Markov
  processes $P^1,\dots,P^n$ with finite state space $\{1,\dots,k\}$
  and denote by $\alpha_{jl}$ the transition intensity from state $j$
  to state $l$. For individual $i$ with covariate $X_i$, let
  $N^i_{jl}(t)$ be the number of observed direct transitions from $j$
  to $l$ before time $t$ (we allow the possibility of right-censoring
  for example). Conditionally on the initial state, the counting
  process $N^i_{jl}$ verifies the following Aalen multiplicative
  intensity model:
  \begin{eqnarray*}
    N^i_{jl}(t) = \int_0^t \alpha_{jl}(X_i,z)Y^i_j(z)dz+M^i(t) \text{
      for all } t \geq 0,
  \end{eqnarray*}
  where $Y^i_j(t)=\1\{P^i(t-)=j\}$ for all $t \geq 0$, see \cite{ABGK}
  or \cite{Jacobsen}. This setting is discussed in \cite{ABGK}, see
  Example VII.11 on mortality and nephropathy for insulin dependent
  diabetics.\end{Example}

We finally cite three papers, where different strategies for the
estimation of the intensity of counting processes is considered,
gathering as a consequence all the previous examples, but in none of
them the presence of covariates was considered. \cite{Ramlau} proposed
a kernel-type estimator, \cite{Gregoire} studied cross-validation for
these estimators. More recently, \cite{Patricia2} considered adaptive
estimation by model selection.

Our aim in this work is to provide an optimal adaptive nonparametric
estimator of the conditional intensity. Our estimation procedure
involves the minimization of a so-called contrast. To achieve that
purpose, we proceed as follows. In Section \ref{procedure}, we
describe the estimation procedure: we explain how the contrast is
built, on which collections of spaces the estimators are defined and
how the relevant space is selected via a data driven penalized
criterion. In Section~\ref{results}, we state an oracle inequality
for our estimator (see Theorem~\ref{main}), a resulting upper bound
(see Corollary~\ref{coromain}) and a lower bound (see
Theorem~\ref{thm:lower_bound}), the latter asserts the optimality in
the minimax sense. An auxiliary estimation of the density of the
reference measure is also studied. The examples of
Section~\ref{simu} are taken in the setting of
Example~\ref{regcens}, in order to provide a short illustration of
the practical properties of our estimator. Lastly, proofs are
gathered in Sections
\ref{proofs}-\ref{sec:deviation}-\ref{auxiliary}. We mention that
the deviation inequalities proved in Section \ref{sec:deviation} may
be of intrinsic interest.

\begin{remark}
  \label{rem:aniso}
  An inherent remark about this model is that there is no reason for
  the conditional intensity $\alpha(x, z)$ to have the same behavior
  with respect to the $z$ (time) and $x$ (covariates) variables. This
  is the reason why it is mandatory in our purely nonparametric
  setting to consider anisotropic regularity for $\alpha$. Think for
  instance of the very popular case of proportional hazards Cox model,
  see~\cite{Cox}, it is assumed that $\alpha(x, z) = \alpha_0(z)
  \exp(\beta^\top x)$ for some unknown function $\alpha_0$ and unknown
  vector $\beta \in \mathbb R^d$. Of course, in this model, the
  smoothness in the $x$ direction is higher than in the $z$ direction.
\end{remark}

For the sake of simplicity, we will assume in the following that the
covariate $X$ is one-dimensional. Similar procedures and results for
multivariate covariates are an almost effortless extension, as
discussed in Remark~\ref{remRd}.

\section{Description of the procedure}
\label{procedure}

Our estimation procedure involves the minimization of a contrast. This
contrast is tuned to the problem considered in this paper, as
explained in the next section.

\subsection{Definition of the contrast}

Let $A = A_1 \times A_2$ be a compact set on $\mathbb R \times \mathbb
R_+$ on which the function $\alpha$ will be estimated. Without loss of
generality, we set $A=[0,1]\times[0,1]$, and in particular
$\tau=1$. Let $h$ be a function in $(L^2\cap L^\infty)(A)$. Define the
contrast function:
\begin{eqnarray}\label{eqn:contrast}
\gamma_n(h)= \frac 1n \sum_{i=1}^n \int_0^1 h^2(X_i,z) Y^i(z) dz- \frac
2n  \sum_{i=1}^n \int_0^1 h(X_i,z)dN^i(z).
\end{eqnarray}
This contrast is of least-squares type adapted to the problem
considered here. Since each $N^i$ admits a Doob-Meyer decomposition
($N^i=\Lambda^i+M^i$), we have:
\begin{eqnarray*}
\gamma_n(h)=\frac 1n \sum_{i=1}^n \int_0^1 h^2(X_i,z)Y^i(z) dz- \frac
2n  \sum_{i=1}^n \int_0^1 h(X_i,z)d\Lambda^i(z)- \frac
2n  \sum_{i=1}^n \int_0^1 h(X_i,z)dM^i(z), \end{eqnarray*}
so that:
\begin{eqnarray*}
  \mathbb E\big(\gamma_n(h)\big)=\mathbb E \big(  \int_0^1 h^2(X,z)Y(z) dz\big)- \mathbb E\big(
  2 \int_0^1 h(X,z)d\Lambda(z)). \end{eqnarray*}
Let $F_X$ denote the c.d.f. of the covariate $X$ and  $\|\cdot\|_{\mu}$ the norm defined by:
\begin{eqnarray*}
\|h\|^2_{\mu}&:=& \mathbb E \big( \int_0^1 h^2(X,z)Y(z) dz \big)=\iint_A h^2(x,z)d\mu(x,z), \end{eqnarray*} where
$d\mu(x,z):= {\mathbb E}(Y(z)|X=x)F_X(dx)dz.$
By the Aalen multiplicative intensity model, see Equation \eqref{eq:aalen}, we get:
\begin{eqnarray*}
\mathbb E\big(\gamma_n(h)\big)=\|h\|_{\mu}^2 -2 \iint
h(x,z)\alpha(x,z) {\mathbb E}(Y(z)|X=x)F_X(dx)dz= \|h-\alpha\|_{\mu}^2
-\|\alpha\|^2_{\mu}.
\end{eqnarray*}
This explains why minimizing $\gamma_n(\cdot)$ over an appropriate set
of functions described  below, is a relevant strategy to estimate $\alpha$.

\begin{examplen}
  In the particular case of regression for right-censored data, the
  conditional hazard function is estimated and the contrast function
  has the following form:
\begin{eqnarray*}
  \gamma_n(h)&=& \frac 1n \sum_{i=1}^n \int_0^1 h^2(X_i,z)\1(Z_i\geq z) dz- \frac 2n  \sum_{i=1}^n \delta_i h(X_i,Z_i).
\end{eqnarray*}
We have in addition an explicit formula for $d\mu(x,z)$:
$$d\mu(x,z)=(1-L_{Z|X}(z,x)) F_X(dx)dz,$$ where $$1-L_{Z|X}(z,x):=
\mathbb P( Z \geq z | X = x ) = (1 - F_{T|X}(x,z))(1-G_{C|X}(x,z))$$ and $G_{C|X}$ is the
conditional c.d.f. of $C$ given $X$.

\begin{remark}
  In our setting, it is possible to let the censoring depend on the
  covariates, as in \cite{DAB89} or, more recently
  \cite{Heuch}. Assumption $(\mathcal C)$ above is weaker than the
  assumption: $T$ and $C$ are independent and $\mathbb P(T \leq
  C|X,Y)=\mathbb P(T \leq C|Y)$ in \cite{STUT}.
\end{remark}
\end{examplen}

\subsection{Assumptions and notations}

Before defining the estimation procedure, we need to introduce some
assumptions and notations. Define the norms
\begin{equation*}
\|h\|^2 := \iint h^2(x,z)dxdz, \| h \|_A^2 := \iint_A h^2(x,z) dx dz
\text{ and } \|h\|_{\infty,A} := \sup_{(x,z)\in A} |h(x,z)|,
\end{equation*}
and assume that the following holds:
\begin{itemize}
\item ($ \mathcal A 1$) The covariates $X_i$ admit a p.d.f. $f_X$ such
  that $\sup_{A_2} |f_X | < +\infty$.
\end{itemize}
Assumption $(\mathcal A1)$ implies that $\mu$ admits a density
w.r.t. the Lebesgue measure. We denote by $f$ this density:
\begin{equation}
\label{eq:def_f}
d \mu(x, z) = f(x,z) dx dz \text{ where } f(x,z)= {\mathbb E}
(Y(z)|X=x) f_X(x).
\end{equation}
We also assume:
\begin{itemize}
\item ($\mathcal A 2$) There exists $f_0>0$, such that $\forall
(x,z)\in A_1\times A_2, \; f(x,z)\geq f_0$.
\item ($\mathcal A 3$) $\forall (x,z)\in A_1\times A_2, \;
\alpha(x,z)\leq \|\alpha\|_{\infty,A}<+\infty$.
\item $(\mathcal A4)$ $\forall i, \forall t,\; Y^i(t)\leq C_Y$ where
$C_Y$ is a known fixed constant.
\end{itemize}
Note that in the examples described in Section~\ref{sec:intro},
Assumption~$({\mathcal A}4)$ is clearly fulfilled with $C_Y=1$. We
will set $C_Y=1$ in the following for simplicity.

\subsection{Definition of the estimator}
\label{defestproj}

We use the usual model selection paradigm (see, for instance,
\cite{massart}): first minimize the contrast $\gamma_n(\cdot)$ over a
finite-dimensional function space $S_m$, then select the appropriate
space by penalization. We introduce a collection $\{S_m ,m\in
\mathcal{M}_n\}$ of projection spaces: $S_m$ is called a model and
${\mathcal M}_n$ is a set of multi-indexes (see the examples in
Section \ref{hypmo}). For each $m=(m_1,m_2)$, the space $S_m$ of
functions with support in $A=A_1\times A_2$ is defined by:
$$S_m = F_{m_1} \otimes H_{m_2} =
\Big\{ h, \quad h(x,z)=\sum_{j\in J_m}\sum_{k\in K_m} {a}_{j,k}^m
\varphi_j^m(x)\psi_k^m(z), \; a_{j,k}^m\in {\mathbb R} \Big\},$$ where
$F_{m_1}$ and $H_{m_2}$ are subspaces of $(L^2\cap
L^\infty)(\mathbb{R})$ respectively spanned by two orthonormal bases
$(\varphi_j^m)_{j\in J_m}$ with $|J_m|=D_{m_1}$ and $(\psi_k^m)_{k\in
  K_m}$ with $|K_m|=D_{m_2}$. For all $j$ and all $k$, the supports of
$\varphi_j^m$ and $\psi_k^m$ are respectively included in $A_1$ and
$A_2$. Here $j$ and $k$ are not necessarily integers, they can be
couples of integers, as in the case of a piecewise polynomial space,
see Section \ref{hypmo}.

\begin{remark}\label{remRd}
  From a theoretical point of view, we could consider that the
  covariates $X$ are in $\mathbb R^d$ and even that their density has
  an anisotropic regularity. For this end, we would have to consider
  models of the form $S_m = F_{m_1}\otimes H_{m_2}\otimes \dots
  \otimes H_{m_{d+1}}$. However, this would make the proofs more
  intricate. Notice also the convergence rate would be slower because
  of the curse of dimensionality.  For the sake of clarity, we
  deliberately restrict ourselves to $X \in \mathbb R$. \end{remark}

The first step would be to define $\hat\alpha_m=\argmin_{h\in S_m}
\gamma_n(h)$. To that end, let $h(x,y)=\sum_{j\in J_m}\sum_{k\in K_m}$ ${a}_{j,k}
\varphi_j^m(x)\psi_k^m(y)$ be a function in $S_m$. To compute
$\hat\alpha_m$, we have to solve:
$$  \forall j_0 \forall k_0, \quad
\frac{\partial \gamma_n(h)}{\partial {a}_{j_0,k_0}}=0
\Leftrightarrow G_m{A_m}= \Upsilon_m,$$ where ${A_m}$ denotes the
matrix $({a}_{j,k})_{j\in J_m, k\in K_m}$,
$$
G_m := \Big(\displaystyle
\frac{1}{n}\sum_{i=1}^n\varphi_j^m(X_i)\varphi_l^m(X_i)\int
\psi_k^m(z)\psi_p^m(z)Y^i(z) dz \Big) _{(j,k), (l,p) \in J_m\times
K_m} $$ and $$
\Upsilon_m := \Big(\displaystyle\cfrac{1}{n}\sum_{i=1}^n\varphi_j^m(X_i)
\int \psi_k^m(z)dN^i(z)\Big) _{j\in J_m, k \in K_m}. $$
Unfortunately $G_m$ may not be invertible. To overcome this problem,
we modify the definition of $\hat\alpha_m$ in the following way:
\begin{eqnarray}\label{alpha}
\hat \alpha_m := \Big\{\begin{array}{ll} \argmin_{h\in S_m}
    \gamma_n(h) & \mbox{ on } \hat \Gamma_m \\ 0 & \mbox{ on } \hat
    \Gamma_m^\complement\end{array}\Big.,
\end{eqnarray}
where
$$\hat\Gamma_m := \Big\{ \min{\rm Sp}(G_m)\geq   \max(\hat f_0/3,
n^{-1/2})\Big\}$$ where Sp$(G_m)$ denotes the spectrum of $G_m$
i.e. the set of the eigenvalues of the matrix $G_m$ (it is easy to see
that they are nonnegative). The estimator $\hat f_0$ of $f_0$ (the
minimum of the density $f$, see $({\mathcal A}2)$) is required to
fulfill the following assumption:
\begin{itemize}
\item ($\mathcal A 5$) For any integer $k\geq 1$, ${\mathbb
    P}(|\hat f_0-f_0|> f_0/2)\leq C_k/n^k$.
\end{itemize}
An estimator satisfying ($\mathcal A 5$) is defined in
Section~\ref{sec:estimation_de_f}. In fact, $k=7$ is enough for the
proofs. We refer the reader to the proof of Lemma~\ref{inclus}, see
Section~\ref{auxiliary}, for an explanation of the presence of
$n^{1/2}$ in the definition of $\hat\Gamma_m$. In practice, this
constraint is generally not used (the matrix is invertible, otherwise
another model is considered).

The final step is to select the relevant space via the penalized
criterion:
\begin{equation}\label{select}
\hat m=\argmin_{m \in {\mathcal M}_n} \Big(\gamma_n(\hat\alpha_m)+
  {\rm pen}(m)\Big),
\end{equation}
where ${\rm pen}(m)$ is defined in Theorem~\ref{main} below, see
Section~\ref{results}. Our estimator of $\alpha$ on $A$ is then
$\hat\alpha_{\hat m}$.

\subsection{Assumptions on the models and examples}\label{hypmo}

Let us introduce the following set of assumptions on the models $\{
S_m : m \in \mathcal M_n \}$, which are usual in model selection
techniques.

\begin{itemize}
\item $(\mathcal M1)$ For $i=1,2$, $\mathcal{D}_n^{(i)}:=\max_{m\in
  \mathcal{M}_n} D_{m_i}\leq n^{1/4}/ \sqrt{\log n}$.
\item $(\mathcal M2)$ There exist positive reals $\phi_1, \phi_2$ such
that, for all $u$ in $F_{m_1}$ and for all $v$ in $H_{m_2}$, we have
\begin{equation*}
  \sup_{x\in A_1}|u(x)|^2\leq \phi_1D_{m_1} \int_{A_1} u^2 \text{ and } \sup_{x\in
    A_2}|v(x)|^2\leq \phi_2D_{m_2}\int_{A_2} v^2.
\end{equation*}
By letting $\phi_0=\sqrt{\phi_1\phi_2}$, that leads to
\begin{equation}
  \forall h\in S_m \qquad \|h\|_{\infty,A}\leq
  \phi_0\sqrt{D_{m_1}D_{m_2}}\|h\|_A\label{M2}.
\end{equation}
\item $(\mathcal M3)$ Nesting condition:
  \begin{equation*}
    D_{m_1}\leq D_{m_1'}\Rightarrow F_{m_1}\subset F_{m_1'} \text{ and
    } D_{m_2} \leq D_{m_2'}\Rightarrow H_{m_2}\subset H_{m_2'}.
  \end{equation*}
  Moreover, there exists a global nesting space ${\mathcal S}_n$ in
  the collection, such that $\forall m\in {\mathcal M}_n, S_m\subset
  {\mathcal S}_n$ and dim$({\mathcal S}_n):=N_n\leq \sqrt{n/\log n}$.
\end{itemize}
Assumptions $(\mathcal M1)$--$(\mathcal M3)$ are not too
restrictive. Indeed, they are verified for the spaces $F_{m_1}$ (and
$H_{m_2}$) on $A_1 = [0, 1]$ spanned by the following bases (see
\cite{BBM}):

\begin{itemize}
\item $[T]$ Trigonometric basis: ${\rm
    span}(\varphi_0,\dots,\varphi_{m_1-1})$ with
  $\varphi_0=\1([0,1])$, $\varphi_{2j}(x)=\sqrt{2}$ $\cos(2\pi jx)$
  $\1([0,1])(x)$, $\varphi_{2j-1}(x)=$ $\sqrt{2}\sin(2\pi jx)
  \1([0,1])(x)$ for $j\geq 1$. For this model $D_{m_1}=m_1$ and
  $\phi_1=2$ hold.

\item $[DP]$ Regular piecewise polynomial basis: polynomials of degree
  $0,\dots,r$ (where $r$ is fixed) on each interval
  $[(l-1)/2^D,l/2^D[$ with $l=1,\dots,2^D$. In this case, we have
  $m_1=(D,r)$, $J_m=\{j=(l,d), \; 1\leq l\leq 2^D, 0\leq d\leq r\}$,
  $D_{m_1}=(r+1)2^D$ and $\phi_1=\sqrt{r+1}$.

\item $[W]$ Regular wavelet basis: ${\rm span}(\Psi_{lk},
  l=-1,\dots,m_1, k\in\Lambda(l))$ where $\Psi_{-1,k}$ is the
  translates of the father wavelet $\Psi_{-1}$ and
  $\Psi_{lk}(x)=2^{l/2}\Psi(2^lx-k)$ where $\Psi$ is the mother
  wavelet. We assume that the supports of the wavelets are included in
  $A_1$ and that $\Psi_{-1}$ belongs to the Sobolev space $W_2^r$, see
  \cite{HKPT}.

\item $[H]$ Histogram basis: for $A_1=[0,1]$, ${\rm
  span}(\varphi_1,\dots,\varphi_{2^{m_1}})$ with $\varphi_j
=2^{m_1/2}\1([(j-1)/2^{m_1},{j}/2^{m_1}[) $ for
$j=1,\dots,2^{m_1}$. Here $D_{m_1}=2^{m_1}$, $\phi_1=1$. Notice that $[H]$ is a particular case of both $[DP]$ and $[W]$.
\end{itemize}

\begin{remark}
  The first assumption prevents the dimension to be too large compared
  to the number of observations. We can lighten considerably this
  constraint for localized basis: for histogram basis, piecewise
  polynomial basis and wavelets, $(\mathcal M1)$ reduces to ${\mathcal
    D}_n^{(i)}\leq \sqrt{n/\log n}$.  Analogously in $(\mathcal M3)$,
  we would get $N_n\leq n/\log n$.  The condition $(\mathcal M2)$
  implies a useful link between the $L^2$ norm and the infinite norm.
  The third assumption $(\mathcal M3)$ implies in particular that
  $\forall m,m'\in {\mathcal M}_n$, $S_m+S_{m'}\subset {\mathcal
    S}_n$. This condition is useful for the chaining argument used in
  the proofs, see Section~\ref{sec:deviation}.
\end{remark}

\section{Main results}
\label{results}

\subsection{Oracle inequality}


For a function $h$ and a space $S$, let
$$d(h,S)=\inf_{g\in S}\|h-g\|=\inf_{g\in S}\Big(\iint
|h(x,y)-g(x,y)|^2dxdy\Big)^{1/2}.$$ The estimator $\hat \alpha_{\hat
m}$ where $\hat \alpha_m$ is given respectively by~\eqref{alpha} and
$\hat m$ is given by~\eqref{select} satisfies the following oracle
inequality.
\begin{thm}
  \label{main}
Let $(\mathcal A 1)$ -- $(\mathcal A 5)$ and $(\mathcal M1)$ --
$(\mathcal M3)$ hold. Define the following penalty\textup:
\begin{equation}
  \label{penalite}
  \pen(m) := K_0 (1+\| \alpha \|_{\infty,A})   \frac{D_{m_1}D_{m_2}}n,
\end{equation}
where $K_0$ is a numerical constant. We have
\begin{equation}
  \label{risk}
  \E(\|\alpha\1(A) - \hat \alpha_{\hat m} \|^2) \leq C \underset{m
    \in \mathcal{M}_n}{\inf}\{d^2(\alpha\1(A),S_m)
  +\pen(m)\}+\frac{C'}{n}
\end{equation}
where $C = C(f_0, \|f\|_{A,\infty})$ and $C'$ is a constant
depending on $\phi_1, \phi_2, \|\alpha\|_{\infty,A}, f_0.$
\end{thm}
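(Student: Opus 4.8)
The plan is to run the classical model-selection machinery (in the spirit of~\cite{massart,BBM}) adapted to the counting-process setting, the genuinely new ingredient being a Talagrand-type deviation inequality for the martingale empirical process. First I would set up the algebra of the contrast: write $\alpha_A=\alpha\1(A)$ and, for $m\in\mathcal M_n$, let $\alpha_m$ be the $L^2(A)$-orthogonal projection of $\alpha_A$ onto $S_m$, so that $\|\alpha_A-\alpha_m\|=d(\alpha_A,S_m)$. Introduce the empirical norm $\|h\|_n^2=\frac1n\sum_{i=1}^n\int_0^1 h^2(X_i,z)Y^i(z)\,dz$ and the centered martingale process $\nu_n(h)=\frac1n\sum_{i=1}^n\int_0^1 h(X_i,z)\,dM^i(z)$. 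Using $dN^i=\alpha(X_i,\cdot)Y^i\,dz+dM^i$ in~\eqref{eqn:contrast} gives, for $t,s$ supported in $A$, the identity $\gamma_n(t)-\gamma_n(s)=\|t-\alpha_A\|_n^2-\|s-\alpha_A\|_n^2-2\nu_n(t-s)$. Taking $t=\hat\alpha_{\hat m}$, $s=\alpha_m$ and combining~\eqref{select} with the fact that on $\hat\Gamma_{\hat m}$ (resp. $\hat\Gamma_m$) the estimator $\hat\alpha_{\hat m}$ (resp. $\hat\alpha_m$) actually minimizes $\gamma_n$ over $S_{\hat m}$ (resp. $S_m$), one gets the basic inequality
$$\|\hat\alpha_{\hat m}-\alpha_A\|_n^2\le\|\alpha_m-\alpha_A\|_n^2+2\nu_n(\hat\alpha_{\hat m}-\alpha_m)+\pen(m)-\pen(\hat m).$$

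Next I would isolate the good events. Since $f_0\le f\le\|f\|_{\infty,A}$ on $A$ by $(\mathcal A2)$, the norms $\|\cdot\|_A$, $\|\cdot\|_\mu$ and $\|\cdot\|$ are equivalent, with explicit constants, on functions supported in $A$. Put $\Omega_n=\{\,\big|\|h\|_n^2-\|h\|_\mu^2\big|\le\tfrac12\|h\|_\mu^2\ \text{for all }h\in\mathcal S_n\,\}$ and $\Omega_f=\{|\hat f_0-f_0|\le f_0/2\}$; by $(\mathcal A5)$ and a concentration bound for $\|h\|_n^2-\|h\|_\mu^2$ (this is where the constraint $N_n\le\sqrt{n/\log n}$ enters), $\mathbb P(\Omega_n^\complement)$ and $\mathbb P(\Omega_f^\complement)$ are $O(n^{-k})$ for every $k$. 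On $\Omega_n\cap\Omega_f$ the matrix $G_m$ is close to its deterministic mean, whose least eigenvalue is $\ge f_0$, so $\Omega_n\cap\Omega_f\subset\bigcap_{m\in\mathcal M_n}\hat\Gamma_m$ (this is Lemma~\ref{inclus}, which also explains the $n^{-1/2}$ safeguard). On the complementary event one bounds $\mathbb E(\|\hat\alpha_{\hat m}-\alpha_A\|^2\1_{(\Omega_n\cap\Omega_f)^\complement})$ by $O(1/n)$, from a crude deterministic bound on $\|\hat\alpha_{\hat m}\|$ (using $\min\mathrm{Sp}(G_{\hat m})\ge n^{-1/2}$ on $\hat\Gamma_{\hat m}$, $C_Y=1$ and moment bounds on the $N^i$) combined with $\mathbb P((\Omega_n\cap\Omega_f)^\complement)=O(n^{-k})$.

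The crux is the control of $\nu_n(\hat\alpha_{\hat m}-\alpha_m)$. Write $g=\hat\alpha_{\hat m}-\alpha_m\in S_m+S_{\hat m}\subset\mathcal S_n$, of dimension $\le D_{m_1}D_{m_2}+D_{\hat m_1}D_{\hat m_2}$, so $2\nu_n(g)\le\tfrac14\|g\|_\mu^2+4\sup_{h\in B_\mu(m,\hat m)}\nu_n(h)^2$ with $B_\mu(m,m')=\{h\in S_m+S_{m'}:\|h\|_\mu\le1\}$. The martingale $\nu_n(h)$ has predictable variation $\frac1{n^2}\sum_i\int h^2(X_i,z)\alpha(X_i,z)Y^i(z)\,dz\le\|\alpha\|_{\infty,A}\|h\|_n^2/n$ and, by $(\mathcal M2)$, jumps bounded by a multiple of $\|h\|_{\infty,A}/n\le\phi_0\sqrt{N_n}\,\|h\|_A/n$. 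A Bernstein/Talagrand-type deviation inequality for counting-process martingales — established in Section~\ref{sec:deviation}, and possibly of independent interest — then yields a term $p(m,m')$ of order $(1+\|\alpha\|_{\infty,A})\big(D_{m_1}D_{m_2}+D_{m'_1}D_{m'_2}\big)/n$ with $\sum_{m'\in\mathcal M_n}\mathbb E\big[\big(\sup_{h\in B_\mu(m,m')}\nu_n(h)^2-p(m,m')\big)_+\1_{\Omega_n}\big]=O(1/n)$, where $(\mathcal M1)$ and $(\mathcal M3)$ are used to absorb $|\mathcal M_n|$ and a logarithmic factor; choosing $K_0$ in~\eqref{penalite} large enough forces $8\,p(m,m')\le\pen(m)+\pen(m')$. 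This is the step I expect to be the main obstacle: tuning the variance and sup-norm contributions so that the penalty comes out exactly proportional to $(1+\|\alpha\|_{\infty,A})D_{m_1}D_{m_2}/n$, and verifying that the remainder terms sum to $O(1/n)$ over the anisotropic collection.

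Finally I would collect the pieces. On $\Omega_n\cap\Omega_f$, plug the previous bound into the basic inequality, use $\tfrac14\|g\|_\mu^2\le\tfrac12\|\hat\alpha_{\hat m}-\alpha_A\|_\mu^2+\tfrac12\|\alpha_m-\alpha_A\|_\mu^2$, absorb $\|\alpha_m-\alpha_A\|_n^2$ and $\tfrac12\|\alpha_m-\alpha_A\|_\mu^2$ into $d^2(\alpha_A,S_m)$ by norm equivalence, use $4p(m,\hat m)\le\tfrac12\pen(m)+\tfrac12\pen(\hat m)$ so that the $\pen(\hat m)$ terms cancel, and move a fraction of $\|\hat\alpha_{\hat m}-\alpha_A\|_\mu^2$ to the left-hand side (passing once more from $\|\cdot\|_n$ to $\|\cdot\|_\mu$ to $\|\cdot\|$). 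This gives $\mathbb E(\|\hat\alpha_{\hat m}-\alpha_A\|^2\1_{\Omega_n\cap\Omega_f})\le C(d^2(\alpha_A,S_m)+\pen(m))+C'/n$ for every $m$; adding the $O(1/n)$ bad-event bound and taking the infimum over $m$ yields~\eqref{risk}.
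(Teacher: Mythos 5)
Your proposal follows essentially the same route as the paper: the same contrast decomposition via the martingale empirical process $\nu_n$, the same pair of good events (your $\Omega_n,\Omega_f$ are the paper's $\Delta,\Omega$), the same crude moment bound on $\|\hat\alpha_{\hat m}\|$ combined with exponentially small bad-event probabilities, and the same Bernstein-plus-chaining maximal inequality (Proposition~\ref{tala}) to calibrate $p(m,m')$ and hence the penalty. The only differences are cosmetic bookkeeping (you target $\|\hat\alpha_{\hat m}-\alpha_A\|_n^2$ directly while the paper first splits off $\alpha_m$ by triangle inequality), so this is a faithful reconstruction of the proof.
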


The proof of Theorem~\ref{main} involves a deviation inequality for
the empirical process
\begin{equation*}
  \nu_n(h) :=  \frac1n \sum_{i=1}^n \int_0^1 h(X_i,z) d M^i(z),
\end{equation*}
where $M^i(t) = N^i(t) - \int_0^t \alpha(X_i, z) Y^i(z) dz$ are
martingales, see Section~\ref{sec:intro}, and a $L^2 - L^\infty$
chaining argument.

\begin{remark}
The penalty involves the unknown quantity
$\|\alpha\|_{\infty,A}$. This is a usual situation, and the
solution is to replace it by an estimator
$\|\hat\alpha_{m_n}\|_{\infty,A}$ where $\hat\alpha_{m_n}$ is an
estimator of the collection, chosen on a space $S_{m_n}$ which is
arbitrary, generally middle sized. Note that, by doing this, the
penalty function becomes random. For details, we refer to
\cite{LAC}, Theorem 2.2.
\end{remark}

\subsection{Upper bound for the rate}

From Theorem~\ref{main}, we can derive the rate of convergence of
$\hat \alpha_{\hat m}$ over anisotropic Besov spaces. We recall that
anisotropy is almost mandatory in this context, see
Remark~\ref{rem:aniso}. For that purpose, assume that $\alpha$
restricted to $A$ belongs to the anisotropic Besov space
$B_{2,\infty}^{\boldsymbol{\beta}}(A)$ on $A$ with regularity
$\boldsymbol{\beta}=(\beta_1,\beta_2)$. Let us recall the definition
of $B_{2,\infty}^{\boldsymbol{\beta}}(A)$. Let $\{ e_1, e_2 \}$ the
canonical basis of $\mathbb{R}^2$ and take $A_{h,i}^r := \{x\in
\mathbb{R}^2 ; x, x+he_i, \dots, x+rhe_i \in A\}$, for $i=1,2$.  For
$x \in A_{h,i}^r$, let
$$\Delta_{h,i}^rg(x) = \sum_{k=0}^r(-1)^{r-k}\binom{r}{k}g(x+khe_i)$$
be the $r$th difference operator with step $h$.  For $t>0$, the
directional moduli of smoothness are given by
$$\omega_{r_i,i}(g,t)=\underset{|h|\leq t}{\sup}\Big(\int_{A_{h,i}^{r_i}}
|\Delta_{h,i}^{r_i}g(x)|^2dx\Big)^{1/2}.$$ We say that $g$ is in
the Besov space $B_{2,\infty}^{\boldsymbol{\beta}}(A)$ if
$\sup_{t>0} \sum_{i=1}^2t^{-\beta_i}\omega_{r_i,i}(g,t)<\infty$ for
$r_i$ integers larger than $\beta_i$. More details concerning Besov
spaces can be found in \cite{Triebel}. The next corollary
shows that $\hat \alpha_{\hat m}$ adapts to the unknown anisotropic
smoothness of $\alpha$.

\begin{cor}
\label{coromain}
Assume that $\alpha$ restricted to $A$ belongs to the anisotropic
Besov space $B_{2,\infty}^{\boldsymbol{\beta}}(A)$ with regularity
$\boldsymbol{\beta}=(\beta_1,\beta_2)$ such that $\beta_1>1/2$ and
$\beta_2>1/2$. We consider the piecewise polynomial or wavelet
spaces described in Subsection~\ref{hypmo} \textup(with the
regularity $r$ of the polynomials and the wavelets larger than
$\beta_i-1$\textup). Then, under the assumptions of
Theorem~\ref{main}, we have
\begin{equation*}
  \E\| \alpha - \hat \alpha_{\hat m} \|_A^2 =
  O(n^{-\frac{2\bar \beta}{2\bar \beta + 2}}).
\end{equation*}
where $\bar\beta$ is the harmonic mean of $\beta_1$ and $\beta_2$
\textup(i.e. $2/\bar\beta=1/\beta_1+1/\beta_2$\textup).
\end{cor}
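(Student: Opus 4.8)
The plan is to derive Corollary~\ref{coromain} from the oracle inequality of Theorem~\ref{main} by a standard bias-variance trade-off, carried out carefully to respect the anisotropy. First I would extend $\alpha\1(A)$, which is in $B_{2,\infty}^{\boldsymbol{\beta}}(A)$ by hypothesis, to a function on $\mathbb R^2$ lying in the corresponding global anisotropic Besov space (this is possible on the cube $A=[0,1]^2$), and invoke the classical approximation result for tensor-product piecewise-polynomial or wavelet spaces: if $F_{m_1}$ has degree/regularity $r\geq \beta_1-1$ and $H_{m_2}$ has degree/regularity $r\geq \beta_2-1$, then for a suitable choice of $m=(m_1,m_2)$ with $D_{m_1}\asymp 2^{j_1}$, $D_{m_2}\asymp 2^{j_2}$ one has
\begin{equation*}
  d^2(\alpha\1(A),S_m) \leq C\bigl( D_{m_1}^{-2\beta_1} + D_{m_2}^{-2\beta_2}\bigr).
\end{equation*}
This is exactly the anisotropic analogue of the estimates in \cite{BBM,HKPT,Triebel}, and the condition $r>\beta_i-1$ in the statement is precisely what makes it applicable.

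Next I would plug this bound, together with the penalty $\pen(m)\asymp D_{m_1}D_{m_2}/n$ from \eqref{penalite} (here $\|\alpha\|_{\infty,A}$ is a fixed finite constant by $(\mathcal A3)$, hence absorbed into the constants), into the infimum in \eqref{risk}. Up to constants we must therefore minimize, over admissible $(D_{m_1},D_{m_2})$,
\begin{equation*}
  D_{m_1}^{-2\beta_1} + D_{m_2}^{-2\beta_2} + \frac{D_{m_1}D_{m_2}}{n}.
\end{equation*}
The optimal choice balances all three terms: one takes $D_{m_1}\asymp n^{\bar\beta/(\beta_1(2\bar\beta+2))}$ and $D_{m_2}\asymp n^{\bar\beta/(\beta_2(2\bar\beta+2))}$, where $\bar\beta$ is the harmonic mean, so that $D_{m_1}^{-2\beta_1}\asymp D_{m_2}^{-2\beta_2}\asymp D_{m_1}D_{m_2}/n \asymp n^{-2\bar\beta/(2\bar\beta+2)}$. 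A short computation with the relation $2/\bar\beta = 1/\beta_1+1/\beta_2$ confirms the exponents match. The condition $\beta_1,\beta_2>1/2$ guarantees $D_{m_1},D_{m_2}\leq C n^{1/4}/\sqrt{\log n}$, i.e. that this choice of $m$ is admissible under $(\mathcal M1)$ (and $D_{m_1}D_{m_2}\leq \sqrt{n/\log n}$ under $(\mathcal M3)$) for $n$ large; for localized bases the weaker form of $(\mathcal M1)$ from the Remark gives the same conclusion under the milder restriction $\beta_1\beta_2/(\beta_1+\beta_2)$-type bound, but $\beta_i>1/2$ suffices in all cases. Finally, $\E\|\alpha-\hat\alpha_{\hat m}\|_A^2 = \E\|\alpha\1(A)-\hat\alpha_{\hat m}\|^2$ since $\hat\alpha_{\hat m}$ is supported in $A$, so \eqref{risk} evaluated at this $m$ yields the claimed rate $O(n^{-2\bar\beta/(2\bar\beta+2)})$, the residual $C'/n$ being of smaller order.

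The main obstacle is the first step: establishing the anisotropic approximation bound $d^2(\alpha\1(A),S_m)\lesssim D_{m_1}^{-2\beta_1}+D_{m_2}^{-2\beta_2}$ for the tensor-product spaces in the exact form needed. One has to control the truncation to the cube (which is why a Besov extension argument is convenient), handle separately the two directions using that the one-dimensional components reproduce polynomials of degree $r\geq \beta_i-1$, and combine the two directional errors via a triangle inequality through an intermediate space $F_{m_1}\otimes L^2$ (or $L^2\otimes H_{m_2}$). Once this is in hand, everything else is the elementary optimization above; care is only needed to check that the optimal $(D_{m_1},D_{m_2})$ indeed satisfies the dimensional constraints $(\mathcal M1)$--$(\mathcal M3)$, which is where the hypotheses $\beta_i>1/2$ enter.
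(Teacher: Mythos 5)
Your proposal follows the same route as the paper: invoke an anisotropic approximation bound $d^2(\alpha\1(A),S_m)\lesssim D_{m_1}^{-2\beta_1}+D_{m_2}^{-2\beta_2}$, plug it into the oracle inequality \eqref{risk}, and optimize the sum of bias and penalty. The paper simply cites the approximation lemma of Lacour (2007) (itself based on Hochmuth and Nikol'skii) instead of re-deriving it from a Besov extension, and then makes the explicit choice $D_{m_1^*}=\lfloor n^{\beta_2/(\beta_1+\beta_2+2\beta_1\beta_2)}\rfloor$, $D_{m_2^*}=\lfloor D_{m_1^*}^{\beta_1/\beta_2}\rfloor$, which matches your exponents $D_{m_i}\asymp n^{\bar\beta/(\beta_i(2\bar\beta+2))}$.

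However, your check of the dimensional constraints is wrong. You assert that $\beta_1,\beta_2>1/2$ guarantees $D_{m_1},D_{m_2}\le C n^{1/4}/\sqrt{\log n}$, i.e. that the optimal model satisfies $(\mathcal M1)$ as stated. This is false: in the isotropic case $\beta_1=\beta_2=\beta$ the optimal choice is $D_{m_i}^*\asymp n^{1/(2(1+\beta))}$, which is $\le n^{1/4}$ only if $\beta\ge 1$. For $\beta_i$ just above $1/2$ the dimension is of order $n^{1/3}$ and violates the strict version of $(\mathcal M1)$ (and $(\mathcal M3)$). The corollary nevertheless holds under $\beta_i>1/2$ precisely because it is stated only for the localized bases [DP] and [W], for which, by the Remark following the model assumptions, $(\mathcal M1)$ relaxes to $D_{m_i}\le\sqrt{n/\log n}$ and $(\mathcal M3)$ to $N_n\le n/\log n$. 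The inequality $\beta_2/(\beta_1+\beta_2+2\beta_1\beta_2)<1/2$, equivalent to $\beta_1-\beta_2+2\beta_1\beta_2>0$, and its symmetric counterpart, are exactly what $\beta_i>1/2$ delivers. This is also why the paper notes, right after the corollary, that for trigonometric spaces the stronger requirement $\beta_i>3/2$ is needed: you cannot get away with the strict $(\mathcal M1)$ bound under $\beta_i>1/2$. You should delete the claim that $n^{1/4}/\sqrt{\log n}$ is met and instead work directly from the relaxed localized-basis constraints, verifying $\beta_2/(\beta_1+\beta_2+2\beta_1\beta_2)<1/2$ and its symmetric version.
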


The rate of convergence achieved by $\hat \alpha_{\hat m}$ in
Corollary~\ref{coromain} is optimal in the minimax sense as proved in
Theorem~\ref{thm:lower_bound} below. For trigonometric spaces, the
result also holds, but for $\beta_1>3/2$ and $\beta_2>3/2$ (because of
$(\mathcal M1)$).

Moreover, assuming for example that $\beta_2>\beta_1$, one can see in
the proof of Corollary~\ref{coromain} that the estimator chooses a
space of dimension $D_{\hat m_2}=D_{\hat
  m_1}^{\beta_1/\beta_2}<D_{\hat m_1}$. This shows that the estimator
is adaptive with respect to the approximation space for each directional
regularity.

\subsection{Lower bound}

In the next Theorem, we prove that the rate $n^{-2 \bar \beta / (2\bar
\beta + 2)}$ is optimal over $B_{2, \infty}^{\boldsymbol{\beta}}(A)$
where we recall that $2 / \bar \beta = 1/\beta_1 + 1/\beta_2$. Since
the lower bound stated in Theorem~\ref{thm:lower_bound} is uniform
over $B_{2, \infty}^{\boldsymbol{\beta}}(A)$, we need to introduce the
ball
\begin{equation*}
B_{2, \infty}^{\boldsymbol{\beta}}(A, L) = \{ \alpha \in B_{2,
  \infty}^{\boldsymbol{\beta}}(A) : \norm{\alpha}_{B_{2,
    \infty}^{\boldsymbol{\beta}}(A)} \leq L \},
\end{equation*}
where
\begin{equation}\label{eqn:normbesov}
\norm{\alpha}_{B_{2, \infty}^{\boldsymbol{\beta}}(A)} :=  \| \alpha
\|_A + | \alpha
|_{B_{2, \infty}^{\boldsymbol{\beta}}(A)}=\| \alpha
\|_A + \sup_{t>0} \sum_{i=1}^2t^{-\beta_i}\omega_{r_i,i}(g,t).
\end{equation}
Let us denote by $E_{\alpha}$ the integration w.r.t. the joint law
$P_{\alpha}^n$, when the intensity is $\alpha$, of the $n$-sample
$(X_i, N^i(z), Y^i(z); z \leq 1, i=1,\dots,n)$.

\begin{thm}
\label{thm:lower_bound}
There is a positive constant $C_L$ such that
\begin{equation*}
  \inf_{\tilde \alpha} \sup_{\alpha \in B_{2,
      \infty}^{\boldsymbol{\beta}}(A, L)}
  {\mathbb E}_{\alpha} \norm{\tilde \alpha - \alpha}_A^2 \geq C_L n^{-2 \bar
    \beta / (2\bar \beta + 2)}
\end{equation*}
for $n$ large enough, where the infimum is taken among all
estimators and where $C_L$ is a constant that depends on
$\boldsymbol{\beta}, L$ and $A$ only.
\end{thm}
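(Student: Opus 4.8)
The plan is to establish the lower bound via a standard reduction to a multiple hypothesis testing problem, using Assouad's cube lemma (a lower-bound technique by multiple hypotheses with a hypercube structure), which is well suited to anisotropic rates. First I would construct a finite family of intensities $\{\alpha_\varepsilon : \varepsilon \in \{0,1\}^{M_1 M_2}\}$ indexed by a hypercube, of the form $\alpha_\varepsilon(x,z) = \alpha_0 + \sum_{j=1}^{M_1}\sum_{k=1}^{M_2} \varepsilon_{j,k}\, \delta\, \phi_{j,k}(x,z)$, where $\alpha_0$ is a fixed smooth positive reference intensity (say a constant, chosen so that everything stays positive and bounded), and $\phi_{j,k}(x,z) = \Phi\big((x - x_j)/h_1\big)\Psi\big((z-z_k)/h_2\big)$ is a product of translated/rescaled bumps of a single smooth compactly supported $\Phi,\Psi$ (so that the pieces have disjoint supports). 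The amplitude $\delta$ and the bandwidths $h_1, h_2$ must be tuned so that each $\alpha_\varepsilon$ lies in $B_{2,\infty}^{\boldsymbol\beta}(A,L)$: the Besov seminorm contribution of a single bump scales like $\delta(h_1^{-\beta_1} + h_2^{-\beta_2})$, which forces $\delta \asymp h_1^{\beta_1} \asymp h_2^{\beta_2}$, and $M_i \asymp h_i^{-1}$. With the balance $h_1^{\beta_1} = h_2^{\beta_2} =: \rho$ one gets $M_1 M_2 \asymp h_1^{-1}h_2^{-1} = \rho^{-1/\beta_1 - 1/\beta_2} = \rho^{-2/\bar\beta}$.

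Next I would record the two quantitative ingredients of Assouad's lemma. For the \emph{separation}: for $\varepsilon, \varepsilon'$ differing in one coordinate, $\|\alpha_\varepsilon - \alpha_{\varepsilon'}\|_A^2 \asymp \delta^2 h_1 h_2 =: s^2$, since the bumps are $L^2$-normalized up to the $\sqrt{h_1 h_2}$ factor. For the \emph{affinity}: I need to control the Hellinger distance (or Kullback--Leibler divergence) between $P_{\alpha_\varepsilon}^n$ and $P_{\alpha_{\varepsilon'}}^n$ when they differ in one coordinate. This is where the counting-process structure enters. Using the Aalen model \eqref{eq:aalen}, the log-likelihood of an $n$-sample is the classical Jacod formula, and the Kullback--Leibler divergence between two such models with intensities $\alpha, \alpha'$ is
\begin{equation*}
  \mathrm{KL}(P_\alpha^n \| P_{\alpha'}^n) = n\, \E_\alpha \int_0^1 \Big( \alpha'(X,z) - \alpha(X,z) + \alpha(X,z)\log\frac{\alpha(X,z)}{\alpha'(X,z)} \Big) Y(z)\, dz,
\end{equation*}
which, since $\alpha_0 \leq \alpha_\varepsilon \leq \alpha_0 + \delta\|\Phi\Psi\|_\infty$ is bounded above and below, is $\lesssim n \iint (\alpha_\varepsilon - \alpha_{\varepsilon'})^2 \, d\mu \lesssim n\, \delta^2 h_1 h_2 = n s^2$ (using $({\mathcal A}1)$, $({\mathcal A}4)$ to bound the density of $\mu$). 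Hence the pairwise affinity stays bounded below as soon as $n s^2 \leq c$ for a small constant $c$.

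Then Assouad's lemma gives
\begin{equation*}
  \inf_{\tilde\alpha}\sup_{\varepsilon} \E_{\alpha_\varepsilon}\|\tilde\alpha - \alpha_\varepsilon\|_A^2 \;\gtrsim\; M_1 M_2 \cdot s^2 \cdot (1 - \sqrt{c}) \;\asymp\; \rho^{-2/\bar\beta}\cdot \delta^2 h_1 h_2,
\end{equation*}
and plugging in $\delta^2 h_1 h_2 = \rho^2 h_1 h_2 = \rho^2 \rho^{1/\beta_1 + 1/\beta_2} = \rho^{2 + 2/\bar\beta}$ the bound becomes $\asymp \rho^{-2/\bar\beta}\cdot\rho^{2+2/\bar\beta} = \rho^2$. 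The constraint $n s^2 = n\rho^{2+2/\bar\beta} \asymp c$ fixes $\rho \asymp n^{-1/(2 + 2/\bar\beta)} = n^{-\bar\beta/(2\bar\beta + 2)}$, whence $\rho^2 \asymp n^{-2\bar\beta/(2\bar\beta+2)}$, which is exactly the claimed rate. I would also need to check that each $\alpha_\varepsilon$ is admissible in the sense that the associated model genuinely arises from the sampling scheme (e.g. in the censored-data Example~\ref{regcens}, $\alpha_\varepsilon$ must be a bona fide conditional hazard; this is arranged by taking $\alpha_0$ large enough relative to the bump size, or by working with a fixed censoring mechanism so that $\mu$ is fixed and bounded below).

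The main obstacle I anticipate is the verification that the constructed family lies in the Besov ball $B_{2,\infty}^{\boldsymbol\beta}(A,L)$ with the correct \emph{anisotropic} seminorm, uniformly in $\varepsilon$: one must check that summing $M_1 M_2$ disjointly-supported bumps does not blow up $\sup_{t>0}\sum_i t^{-\beta_i}\omega_{r_i,i}(\cdot,t)$ beyond $L$, for both small and large $t$, and that the directional moduli of smoothness of the superposition genuinely decouple along the two axes thanks to the tensor-product/disjoint-support structure. A secondary technical point is making the Kullback--Leibler computation fully rigorous via Jacod's formula for the likelihood ratio of counting processes and handling the expectation over the random covariate $X$ and the predictable process $Y$; this is routine given assumptions $({\mathcal A}1)$--$({\mathcal A}4)$ but must be done with care to keep the constant $C_L$ depending only on $\boldsymbol\beta, L, A$ as claimed.
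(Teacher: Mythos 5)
Your plan is sound and reaches the correct rate, but it follows a genuinely different route from the paper: you invoke Assouad's cube lemma over the full hypercube $\{0,1\}^{M_1 M_2}$, whereas the paper applies Tsybakov's many-hypotheses lower bound together with the Varshamov--Gilbert bound to extract a subset of the cube with pairwise Hamming distance $\gtrsim |R_j|/8$ and cardinality $\geq 2^{|R_j|/8}$. Since the perturbations have disjoint supports and the loss is squared $L^2$, both schemes are applicable here and yield the same rate; the Assouad version is arguably more streamlined for this particular loss, while the Tsybakov/Varshamov--Gilbert scheme the paper uses is more uniform (the same machinery they would need for, say, sup-norm loss). The construction, the tuning $\delta \asymp h_1^{\beta_1} \asymp h_2^{\beta_2}$, $M_i \asymp h_i^{-1}$, and the Kullback--Leibler bound via Jacod's likelihood formula, reducing to $n\iint(\alpha-\alpha')^2 d\mu \lesssim n \delta^2 h_1 h_2$ (valid because the reference intensity $\alpha_0$ is a strictly positive constant on $A$ and the perturbations are uniformly small by your positivity constraint), are essentially identical to the paper's. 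The obstacle you flag — checking the anisotropic Besov seminorm of a superposition of $M_1 M_2$ disjointly supported bumps — is real but is resolved exactly the way the paper does it: by taking the bumps to be a genuine orthonormal wavelet system at a fixed resolution level $(j_1,j_2)$, so that Hochmuth's wavelet characterization of $B^{\boldsymbol\beta}_{2,\infty}$ (the paper's reference \cite{HOC}) gives directly $\|\sum_k \omega_k \psi_{j,k}\|_{B^{\boldsymbol\beta}_{2,\infty}(A)} \lesssim (2^{j_1\beta_1}+2^{j_2\beta_2})\|\sum_k\omega_k\psi_{j,k}\|_A$; with this, the Besov constraint reduces to exactly the relation you wrote. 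Your secondary concern about the rigor of the Jacod-formula computation is likewise handled in the paper by writing the log-likelihood ratio, using the Doob--Meyer decomposition to isolate a martingale term that vanishes in expectation, and applying the elementary bound $\Phi_a(x)\leq x+ax^2$; this is equivalent to the inequality $\alpha\log(\alpha/\alpha') - (\alpha - \alpha') \lesssim (\alpha - \alpha')^2$ implicit in your write-up.
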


\subsection{Estimation of $f$ and $f_0$}\label{sec:estimation_de_f}

We recall that $f$ is the density of $\mu$, which is defined in
Equation~\eqref{eq:def_f}.  We define
\begin{equation}
\label{repest} \hat f_m=\argmin_{h\in S_m}
\upsilon_n(h) \mbox{ where } \upsilon_n(h)=\|h\|^2-\frac 2n
\sum_{i=1}^n \int_0^1 h(X_i, z)Y^i(z) dz.
\end{equation}
This estimator admits a simple explicit formulation:
\begin{equation}
\label{eq:repest2}
\hat f_m=\sum_{(j,k)\in J_m\times K_m} \hat b_{j,k}\varphi_j^m(x)
\psi_k^m(y), \mbox{ with } \; \hat b_{j,k} = \frac 1n\sum_{i=1}^n
\varphi_j^m(X_i)\int \psi_k^m(z)Y^i(z)dz.
\end{equation}
As before, we consider estimation of $f$ over the compact set $A = [0,
1] \times [0, 1]$. We choose the space $H_{m_2}$ as the space with
maximal dimension, as explained below. Let us denote it by ${\mathcal
  H}_n$, by ${\mathcal D}_n^{(2)} = \dim(\mathcal H_n)$ its dimension
(see~$(\mathcal M1)$) and by $\ell_n$ its index so that $H_{\ell_n} =
\mathcal H_n$. Hence, we consider, instead of a general $\hat f_m$,
the estimator
\begin{equation*}
\hat f_{m_1} := \argmin_{h\in F_{m_1} \times {\mathcal
    H}_n} \upsilon_n(h).
\end{equation*}
We are now in a position to define an estimator of $f_0$ by
considering any $\inf_{(x,z)\in A} \hat f_{m_1}(x,z)$ with a given
$m_1$. Indeed, an arbitrary choice is sufficient for our estimation
problem concerning $f_0$. In our setting, only a rough estimation of
the lower bound on $f$ is useful. Therefore, for the purpose of
estimating $\alpha$, we can define
\begin{eqnarray} \label{condinf} &&\hat f_0 := \inf_{(x,z)\in A} \hat
  f_{m^*_1}(x,z) \mbox{ with } m^*_1=(D_{m_1^*}, \mathcal D_n^{(2)}).
\end{eqnarray}
Then, the following result holds:
\begin{prop} \label{Omegacomp}
Consider  $\hat f_0$ defined by  (\ref{condinf}) in the basis {\rm [T]}, with $\log n\leq
D_{m_1^*} \leq n^{1/4}/ \sqrt{\log n}$ and ${\mathcal D}_n^{(2)}= n^{1/4}/ \sqrt{\log n}$. Assume  that $f\in {\mathcal B}_{2,\infty}^{(\tilde\beta_1,\tilde\beta_2)}(A)$ with  $\bar{\tilde \beta}>1$, then
${\mathbb P}(|\hat f_0-f_0|> f_0/2)\leq C'_k / n^k$, for any
integer $k$, where $C_k$ is a constant and therefore $\hat f_0$ fulfills assumption (${\mathcal A5}$).
\end{prop}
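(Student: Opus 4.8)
The plan is to control the deviation $|\hat f_0 - f_0|$ by relating it to the supremum norm of the estimation error $\hat f_{m_1^*} - f$ on $A$, and then to show that this sup-norm error is small with overwhelming polynomial probability. Precisely, since $\hat f_0 = \inf_A \hat f_{m_1^*}$ and $f_0 = \inf_A f$ (by $(\mathcal A2)$, with the infimum attained or approached on the compact $A$), we have the elementary bound
\begin{equation*}
|\hat f_0 - f_0| \leq \sup_{(x,z)\in A} |\hat f_{m_1^*}(x,z) - f(x,z)| =: \|\hat f_{m_1^*} - f\|_{\infty, A}.
\end{equation*}
Hence it suffices to prove that $\mathbb P(\|\hat f_{m_1^*} - f\|_{\infty, A} > f_0/2) \leq C'_k/n^k$ for every integer $k$.

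The next step is the usual bias–variance split for the linear projection estimator $\hat f_{m_1^*}$. Writing $f_{m_1^*}$ for the $L^2(dx\,dz)$-orthogonal projection of $f\1(A)$ onto $S_{m_1^*} = F_{m_1^*}\otimes \mathcal H_n$, one has $\hat f_{m_1^*} - f_{m_1^*} = \sum_{(j,k)} (\hat b_{j,k} - b_{j,k})\varphi_j\psi_k$ where $b_{j,k} = \mathbb E(\hat b_{j,k}) = \iint \varphi_j\psi_k\, f\,dx\,dz$ (using the definition \eqref{eq:def_f} of $f$ and the fact that $\hat b_{j,k}$ is an empirical mean of i.i.d.\ terms $\varphi_j(X_i)\int\psi_k(z)Y^i(z)dz$, each bounded in absolute value thanks to $(\mathcal M2)$, $(\mathcal A4)$ and the trigonometric basis being uniformly bounded). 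For the deterministic bias term, the assumption $f\in \mathcal B_{2,\infty}^{(\tilde\beta_1,\tilde\beta_2)}(A)$ with $\bar{\tilde\beta}>1$ and the lower bound $D_{m_1^*}\geq \log n$, $\mathcal D_n^{(2)} = n^{1/4}/\sqrt{\log n}$ give, by standard anisotropic approximation estimates (as in the proof of Corollary~\ref{coromain}), that $\|f\1(A) - f_{m_1^*}\|_{\infty,A} \to 0$ faster than any prescribed rate — here one checks that with $D_{m_1^*}$ at least logarithmic and $\mathcal D_n^{(2)}$ polynomial, the approximation error is eventually smaller than $f_0/4$; in particular for $n$ large the bias alone is negligible compared to $f_0/2$. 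It then remains to handle the stochastic term.

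For the stochastic term, I would use a Bernstein-type concentration inequality. Expand
\begin{equation*}
(\hat f_{m_1^*} - f_{m_1^*})(x,z) = \frac1n\sum_{i=1}^n \Big( U_i(x,z) - \mathbb E\,U_i(x,z)\Big), \quad U_i(x,z) := \sum_{(j,k)}\varphi_j(X_i)\Big(\int\psi_k\, Y^i\Big)\varphi_j(x)\psi_k(z),
\end{equation*}
and note that by $(\mathcal M2)$ and the boundedness of the trigonometric/localized bases, for each fixed $(x,z)\in A$ the summands are bounded by $c\,D_{m_1^*}\mathcal D_n^{(2)} \leq c\sqrt{n/\log n}$ and have variance bounded by $c\,D_{m_1^*}\mathcal D_n^{(2)}$ as well (using $(\mathcal A4)$ and $\sup_{A_2}f_X<\infty$ from $(\mathcal A1)$). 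Bernstein's inequality then yields, for fixed $(x,z)$,
\begin{equation*}
\mathbb P\big(|(\hat f_{m_1^*} - f_{m_1^*})(x,z)| > f_0/4\big) \leq 2\exp\Big(-c'\,\frac{n}{D_{m_1^*}\mathcal D_n^{(2)}}\Big) \leq 2\exp(-c''\sqrt{n\log n}),
\end{equation*}
which already beats $n^{-k}$. To pass from a pointwise bound to a sup-norm bound over $A$, I would use the Lipschitz control coming from $(\mathcal M2)$: functions in $S_{m_1^*}$ have derivatives bounded by a power of $D_{m_1^*}\mathcal D_n^{(2)}\leq n^{1/2}$ times their $L^\infty$ norm, so a net of polynomial cardinality $n^{O(1)}$ in $A$ suffices, and a union bound over the net costs only a polynomial factor — harmless against the super-polynomial (indeed sub-exponential in $\sqrt{n\log n}$) decay. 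Combining the bias and stochastic estimates gives $\mathbb P(\|\hat f_{m_1^*} - f\|_{\infty,A} > f_0/2) \leq \mathbb P(\text{stochastic part} > f_0/4) \leq C'_k/n^k$ for $n$ large, and adjusting $C'_k$ covers small $n$.

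The main obstacle is the sup-norm (rather than $L^2$) control: one must get a deviation bound uniform over $(x,z)\in A$ with the right dependence on the dimensions, and simultaneously verify that the deterministic approximation error in sup-norm is genuinely below the $f_0$-level threshold under the stated lower bound $D_{m_1^*}\geq\log n$ together with the regularity hypothesis $\bar{\tilde\beta}>1$. The choice of the trigonometric basis [T] in the statement is what makes the uniform bias control clean, since smooth trigonometric approximation of a Besov function converges uniformly at a controlled rate; the condition $\bar{\tilde\beta}>1$ ensures the anisotropic rate is fast enough for the sup-norm approximation error to vanish despite the possibly slowly growing $D_{m_1^*}$.
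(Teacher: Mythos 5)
Your reduction $|\hat f_0 - f_0| \leq \|\hat f_{m_1^*} - f\|_{\infty,A}$, the bias-plus-variance split, and the observation that the deterministic approximation error falls below $f_0/4$ for large $n$ all match the paper. For the bias, the paper is slightly more explicit: it invokes Nikol'skii's embedding theorem to pass from $f \in B_{2,\infty}^{(\tilde\beta_1,\tilde\beta_2)}(A)$ to $f \in B_{\infty,\infty}^{(\beta_1^*,\beta_2^*)}(A)$ with $\beta_i^* = \tilde\beta_i(1-1/\bar{\tilde\beta}) > 0$ (this is exactly where $\bar{\tilde\beta}>1$ is used), and then applies the sup-norm version of the approximation lemma for trigonometric spaces; you gesture at ``standard anisotropic approximation estimates,'' which is correct in spirit but elides the embedding step that makes the $L^\infty$ bias bound licit from an $L^2$-Besov hypothesis.

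For the stochastic term you take a genuinely different route. You prove a pointwise Bernstein bound at each $(x,z) \in A$ and then propose to pass to the supremum over $A$ via a polynomial-cardinality net, exploiting the derivative control that trigonometric polynomials of bounded degree afford. The paper instead sidesteps any continuity or net argument by invoking the inverse inequality \eqref{M2} from $(\mathcal M2)$, which gives $\|\hat f_{m_1^*} - f_{m_1^*}\|_{\infty,A} \leq \sqrt{\phi_1\phi_2 D_{m_1^*}\mathcal D_n^{(2)}}\,\|\hat f_{m_1^*} - f_{m_1^*}\|$, then expands $\|\hat f_{m_1^*} - f_{m_1^*}\|^2 = \sum_{j,k}\vartheta_n^2(\varphi_j^{m_1^*}\otimes\psi_k^{m_1^*})$ and applies Bernstein to each scalar $\vartheta_n(\varphi_j\otimes\psi_k)$ with a union bound over the $D_{m_1^*}\mathcal D_n^{(2)}$ coefficient pairs. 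Both approaches deliver the required super-polynomial decay since $(D_{m_1^*}\mathcal D_n^{(2)})^2 \leq n/\log^2 n$, but the paper's version is cleaner here because the estimator lives in a fixed finite-dimensional space and $(\mathcal M2)$ is already available, so the union is over a discrete finite index set rather than a spatial net; your version would additionally require a deterministic polynomial a priori bound on $\|\hat f_{m_1^*} - f_{m_1^*}\|_{\infty,A}$ (easily obtained from the crude coefficient bounds) to avoid circularity in choosing the net mesh, a step you allude to but do not carry out.
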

The proof of this result is given in Section \ref{auxiliary}.

\medskip

Hereafter, we develop a remark concerning the estimation of $f$ in
order to explain why we have selected the second dimension $D_{m_2}$
the largest as possible.  Let $f_{m_1}$ be the orthogonal
projection of the restriction of $f$ to $A$ on the space $F_{m_1}
\times \mathcal H_n$, i.e. for $m_n=(m_1, \ell_n)$, $f_{m_1} =
\sum_{(j,k)\in J_{m_1} \times {\mathcal K}_n} b_{j,k}
\varphi_j^{m_n}\psi_k^{m_n}$, with $|J_{m_1}|=D_{m_1}$ and $|{\mathcal
  K}_n|={\mathcal D}_n^{(2)}$. We obtain the following bias-variance
decomposition.
\begin{prop}
\label{riskf}
Under $(\mathcal M1)$, $(\mathcal M2)$, $(\mathcal A1)$ and
$(\mathcal A4)$, we have
\begin{equation}\label{riskdef} {\mathbb E}( \|\hat f_{m_1}-f
  \|^2_A) \leq \|f_{m_1}-f\|^2_A +  \frac{\ell(A_2) \phi_1
    D_{m_1}}n,\end{equation}
where $\ell(A_2)$ is the Lebesgue measure of $A_2$.
\end{prop}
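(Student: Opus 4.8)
The plan is to exploit the explicit formula \eqref{eq:repest2} for $\hat f_{m_1}$, which shows that $\hat b_{j,k}$ is an empirical mean of i.i.d.\ terms whose expectation is exactly the coefficient $b_{j,k}$ of the projection $f_{m_1}$ of $f$ onto $F_{m_1}\times\mathcal H_n$. Indeed, since $d\mu(x,z)=f(x,z)\,dx\,dz$ and $b_{j,k}=\iint_A f\,\varphi_j^{m_n}\psi_k^{m_n}$, one checks that $\E(\hat b_{j,k})=\E\big(\varphi_j^{m_n}(X)\int\psi_k^{m_n}(z)Y(z)\,dz\big)=\iint_A \varphi_j^{m_n}(x)\psi_k^{m_n}(z)f(x,z)\,dx\,dz=b_{j,k}$. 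Hence $\E(\hat f_{m_1})=f_{m_1}$, and by orthogonality of the basis together with the Pythagorean identity,
\begin{equation*}
\E(\|\hat f_{m_1}-f\|_A^2)=\|f_{m_1}-f\|_A^2+\E(\|\hat f_{m_1}-f_{m_1}\|_A^2)
=\|f_{m_1}-f\|_A^2+\sum_{(j,k)\in J_{m_1}\times\mathcal K_n}\mathrm{Var}(\hat b_{j,k}).
\end{equation*}

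Next I would bound the variance term. Writing $\hat b_{j,k}-b_{j,k}=\frac1n\sum_{i=1}^n U_i^{(j,k)}$ with $U_i^{(j,k)}:=\varphi_j^{m_n}(X_i)\int\psi_k^{m_n}(z)Y^i(z)\,dz-b_{j,k}$, independence gives $\mathrm{Var}(\hat b_{j,k})=\frac1n\mathrm{Var}(U_1^{(j,k)})\leq\frac1n\E\big((U_1^{(j,k)})^2\big)$ because the variance of a mean of i.i.d.\ terms is the individual variance over $n$; dropping the subtracted mean only decreases the second moment. Summing over $(j,k)$,
\begin{equation*}
\sum_{(j,k)}\mathrm{Var}(\hat b_{j,k})\leq\frac1n\,\E\Big(\sum_{j\in J_{m_1}}\varphi_j^{m_n}(X_1)^2\sum_{k\in\mathcal K_n}\Big(\int_0^1\psi_k^{m_n}(z)Y^1(z)\,dz\Big)^2\Big),
\end{equation*}
where I have used that the double sum factorizes as a product over the two bases once the square is expanded and the cross terms are handled via Cauchy–Schwarz, or more directly that for fixed $X_1,Y^1$ the vector $(U_1^{(j,k)}+b_{j,k})_{j,k}$ are the coefficients of the function $(x,z)\mapsto\varphi\cdot(\int\psi Y)$ — but it is cleanest to bound $\sum_{k\in\mathcal K_n}(\int\psi_k^{m_n}(z)Y^1(z)\,dz)^2\leq\int_0^1(Y^1(z)\1(z\in A_2))^2\,dz\leq\ell(A_2)$ by Bessel's inequality (the $\psi_k^{m_n}$ are orthonormal in $L^2$) together with $(\mathcal A4)$ giving $Y^1\leq C_Y=1$. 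Then $(\mathcal M2)$ — more precisely the first half of it, $\sup_{x\in A_1}\sum_{j\in J_{m_1}}\varphi_j^{m_n}(x)^2\leq\phi_1 D_{m_1}$, which is the standard reformulation of the stated inequality for an orthonormal basis — yields $\sum_{j\in J_{m_1}}\varphi_j^{m_n}(X_1)^2\leq\phi_1 D_{m_1}$ pointwise. Combining, $\sum_{(j,k)}\mathrm{Var}(\hat b_{j,k})\leq \phi_1 D_{m_1}\ell(A_2)/n$, which is exactly \eqref{riskdef}.

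The only slightly delicate point is the factorization/Cauchy–Schwarz step that decouples the $j$-sum from the $k$-sum inside the expectation; the clean way around it is to note that for fixed realizations of $X_1$ and $Y^1(\cdot)$ we have $\sum_{(j,k)}(U_1^{(j,k)}+b_{j,k})^2=\|g\|^2$ where $g(x,z)=\big(\sum_j\varphi_j^{m_n}(X_1)\varphi_j^{m_n}(x)\big)\big(\int\psi Y^1\big)$ is not quite what we want, so instead one simply expands the square of $\frac1n\sum_i U_i^{(j,k)}$, uses independence and mean-zero to kill cross terms in $i$, and then for each fixed $i$ bounds $\sum_{(j,k)}(\varphi_j^{m_n}(X_i)\int\psi_k^{m_n}Y^i)^2=\big(\sum_j\varphi_j^{m_n}(X_i)^2\big)\big(\sum_k(\int\psi_k^{m_n}Y^i)^2\big)$ — which genuinely does factorize since it is a product of a function of the $j$-index alone and a function of the $k$-index alone — and applies $(\mathcal M2)$ and Bessel as above. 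I expect the bookkeeping to separate bias and variance to be entirely routine; the main (mild) obstacle is just being careful that $\mathcal H_n$ has the maximal second dimension, so that $f_{m_1}$ and $\hat f_{m_1}$ live in the same space $F_{m_1}\times\mathcal H_n$ and the Pythagorean decomposition is exact with no extra bias in the $z$-direction beyond what is already absorbed in $\|f_{m_1}-f\|_A^2$.
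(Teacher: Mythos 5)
Your proposal is correct and follows essentially the same route as the paper's proof: the Pythagorean bias--variance split using unbiasedness of $\hat b_{j,k}$, the Bessel/projection bound $\sum_k\big(\int\psi_k^{m_n}Y^1\big)^2\leq\int_{A_2}(Y^1)^2\leq\ell(A_2)$ under $(\mathcal A4)$, and the pointwise bound $\sum_j\varphi_j^{m_n}(x)^2\leq\phi_1 D_{m_1}$ from $(\mathcal M2)$. The ``delicate factorization point'' you worry about is a non-issue, as you yourself conclude: $\sum_{j,k}a_jb_k=(\sum_j a_j)(\sum_k b_k)$ holds identically, so no Cauchy--Schwarz is needed and the argument matches the paper's term for term.
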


\begin{proof}
We clearly have
\begin{equation}\|\hat f_{m_1}-f\|^2_A =\|f_{m_1}-f\|^2_A + \|\hat f_{m_1}-f_{m_1}\|^2_A,\end{equation}
where the first term is the bias term and
$\|\hat f_{m_1}-f_{m_1}\|^2_A= \sum_{(j,k)\in J_{m_1} \times {\mathcal K}_n} (\hat
b_{j,k}-b_{j,k})^2$ is the variance term. In view
of~\eqref{eq:repest2}, we have $\mathbb E ( \hat b_{j, k} ) = b_{j,
k}$, and, as a consequence:
\begin{eqnarray*}
{\mathbb E}(\|\hat f_{m_1}-f_{m_1}\|^2_A) &=& \sum_{(j,k)\in
J_{m_1} \times {\mathcal K}_n }
{\rm Var}(\hat b_{j,k})
\\ &=&
\sum_{(j,k)\in J_{m_1} \times {\mathcal K}_n } \frac 1n {\rm
Var}\Big(\varphi_j^{m_n}(X_1) \int_{A_2} \psi_k^{m_n}(z)Y^1(z)dz \Big)
\\ &\leq & \sum_{(j,k)\in J_{m_1} \times {\mathcal K}_n } \frac 1n
{\mathbb E}\Big([\varphi_j^{m_n}(X_1)]^2 \Big[\int_{A_2}
\psi_k^{m_n}(z)Y^1(z) dz\Big]^2 \Big)
\end{eqnarray*}
Now, we note that for any $A_2$-square integrable function $\xi$,
$$\sum_{k \in {\mathcal K}_n} \Big[\int_{A_2} \psi_k^{m_n}(z)\xi(z) dz\Big]^2
\leq \int_{A_2} \xi^2(z)dz$$ by a simple projection argument (the left-hand-side term is the squared norm of the projection of $\xi$ on ${\mathcal H}_n$), and thus under assumption $({\mathcal A}4)$,
\begin{eqnarray*}
{\mathbb E}(\|\hat f_{m_1}-f_{m_1}\|^2_A) &\leq & \frac{\ell(A_2)}n
\sum_{j\in J_{m_1}}{\mathbb E}\Big([\varphi_j^{m_1}(X_1)]^2 \Big)
\leq \frac{\ell(A_2)\phi_1 D_{m_1}}n.
\end{eqnarray*}
Gathering the terms, the risk of the estimator is bounded as
in~\eqref{riskdef}.
\end{proof}

Let us discuss the asymptotic rate of estimation of $f_A$, the
restriction of $f$ to $A$, using the above procedure. For that
purpose, assume that $f_A$ belongs to
$B_{2,\infty}^{\boldsymbol{\tilde\beta}}(A)$ with regularity
$\boldsymbol{\tilde\beta}=(\tilde\beta_1,\tilde\beta_2)$.  Now,
consider the collection of trigonometric polynomials for $\varphi_j,
\psi_k$, and apply lemma of \cite{LAC} (see Section~\ref{proofs}
below). The bias term is bounded by
\begin{equation*}
\|f_{m_1}-f\|^2_A \leq C\{D_{m_1}^{-2 \tilde\beta_1} + [{\mathcal
  D}_n^{(2)}]^{-2 \tilde\beta_2}\}.
\end{equation*}
It is worth noticing that the variance term (i.e. the last term of
\eqref{riskdef}) does not depend on $\ell_n$ nor on ${\mathcal
  D}_n^{(2)}$. This explains why the size of the projection space in
the $z$-direction must be chosen the largest as possible, when the
mean square risk is under study. Take ${\mathcal D}_n^{(2)} = \sqrt{n
  / \log n}$ and assume that $\tilde\beta_2 > 1$, then (\ref{riskdef})
becomes
\begin{equation*}
{\mathbb E}( \|\hat f_{m_1}-f\|^2_A) \leq C[ D_{m_1}^{-2 \tilde\beta_1} +
\frac{\ell(A_2) D_{m_1}}n] + \frac{C' \log n} n.
\end{equation*}
Therefore, choosing $D_{m_1^*} = n^{1/(2 \tilde\beta_1+1)}$ gives the rate
$${\mathbb E}( \|\hat f_{m_1}-f_A\|^2) \leq C''n^{-2 \tilde\beta_1/(2 \tilde\beta_1+1)}$$
which is the standard asymptotic rate for a single variable function
with regularity $\tilde\beta_1$. We could study a model selection
procedure and find a penalty function of order $D_{m_1} / n$, so that
a relevant space is chosen in an automatic way. We do not go into
further details since a rough estimation of $f_0$ is sufficient to
estimate the conditional intensity $\alpha$.

\section{Illustration}
\label{simu}

\begin{figure}[htpb]
\includegraphics[scale=0.8]{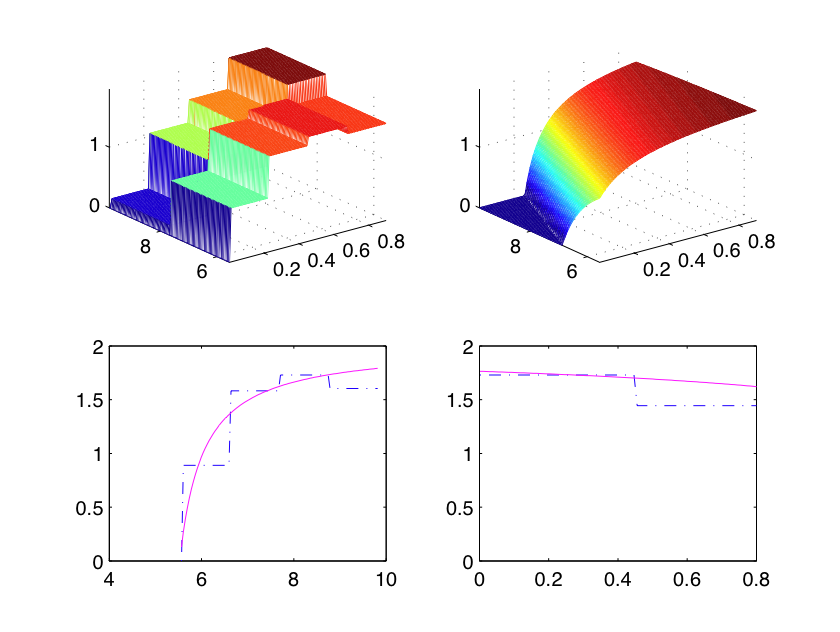}
\caption{Case (NL) Estimated (top left) and true (top right)
  conditional hazard rates and example of sections (bottom) for a
  fixed value of $x$ (left) or $y$ (right).}
\label{fig1}
\end{figure}

In this section, we give a numerical illustration of the adaptive
estimator $\hat \alpha_{\hat m}$, defined in
Section~\ref{procedure}, computed with the dyadic histogram basis
$[H]$. We sample i.i.d. data $(X_1, T_1), \ldots, (X_n, T_n)$ in
three particular cases of the regression model of
Example~\ref{regcens} from Section~\ref{regcens}. For the sake of
simplicity, we simulate the covariates $X_i$ with the uniform
distribution on $[0, 1]$. The size of the data set is $n = 1000$.
\begin{itemize}
\item Case (NL). Non-Linear regression:
\begin{equation*}
  T_i = b(X_i) + \sigma \varepsilon_i.
\end{equation*}
We simulate $\varepsilon_i$ with a $\chi^2(4)$ distribution and
$b(x) = 2x + 5$. Note that in this case, the hazard function to be
estimated is
\begin{equation*}
  \alpha_{\rm NL}(x, t) = \frac{1}{\sigma} \alpha_\varepsilon \Big(
  \frac{t - b(x)}{\sigma} \Big),
\end{equation*}
where $\alpha_\varepsilon$ denotes the hazard function of
$\varepsilon$.
\item Case (AFT). Accelerated Failure Time model:
$$\log(T_i)=a+b X_i + \varepsilon_i,$$ where the $\varepsilon_i$
are standard normal and $a = 5$ and $b = 2$. The hazard function to
be estimated is then:
\begin{equation*}
  \alpha_{AFT}(x,t) = \frac{\alpha_\varepsilon( \log(t) - (a + b
    x))}{t}.
\end{equation*}
\item Case (PH). Proportional Hazards model: in this case, the hazard
  writes
\begin{equation*}
  \alpha(x,t) = \exp(b x) \alpha_0(t).
\end{equation*}
We take $b = 0.4$ and $\alpha_0(t) = a \lambda t^{a-1}$, which is a
Weibull hazard function with $a = 3$ and $\lambda = 1$.
\end{itemize}
The penalty is taken as $$\widehat{{\rm pen}}(m_1,m_2) = 5
\widehat{\|\alpha\|_{\infty,A}} \frac{2^{m_1+m_2}}n,$$ where
$\widehat{\|\alpha\|_{\infty,A}}$ is estimated as the maximal of the
estimated histogram coefficients ($\max_{j,k} \hat a_{j,k}$) on the
largest space which is considered (taken with dimension
$\sqrt{n}$).

We can see from Figures~\ref{fig1}-\ref{fig3} that the algorithm
exploits the opportunity (Figures~\ref{fig1} and~\ref{fig3}) of
choosing different dimensions in the two directions, and that it
captures well the general form of the surfaces.

\begin{figure}[htpb]
\includegraphics[scale=0.8]{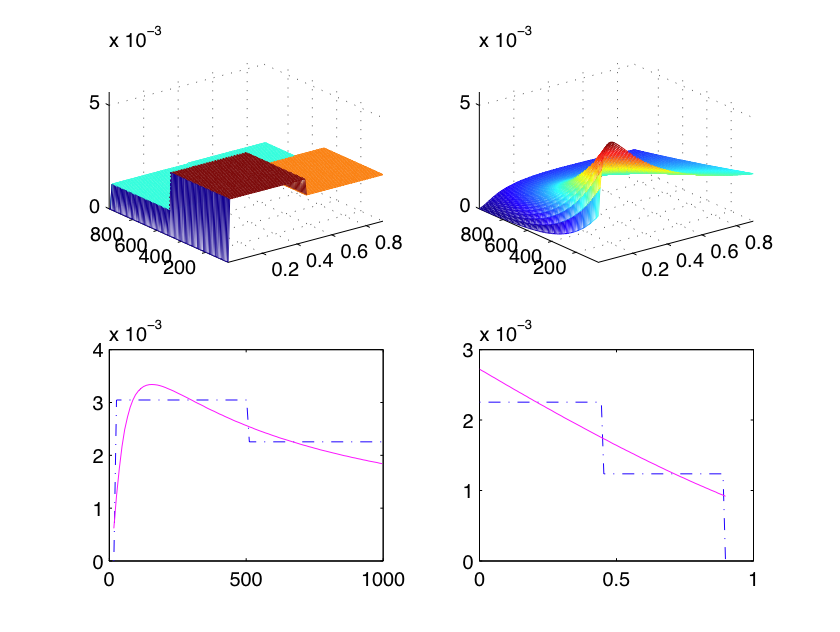}
\caption{Case (AFT) Estimated (top left) and true (top right)
  conditional hazard rates and example of sections (bottom) for a
  fixed value of $x$ (left) or $y$ (right).}
\label{fig2}
\end{figure}

\begin{figure}[htpb]
\includegraphics[scale=0.8]{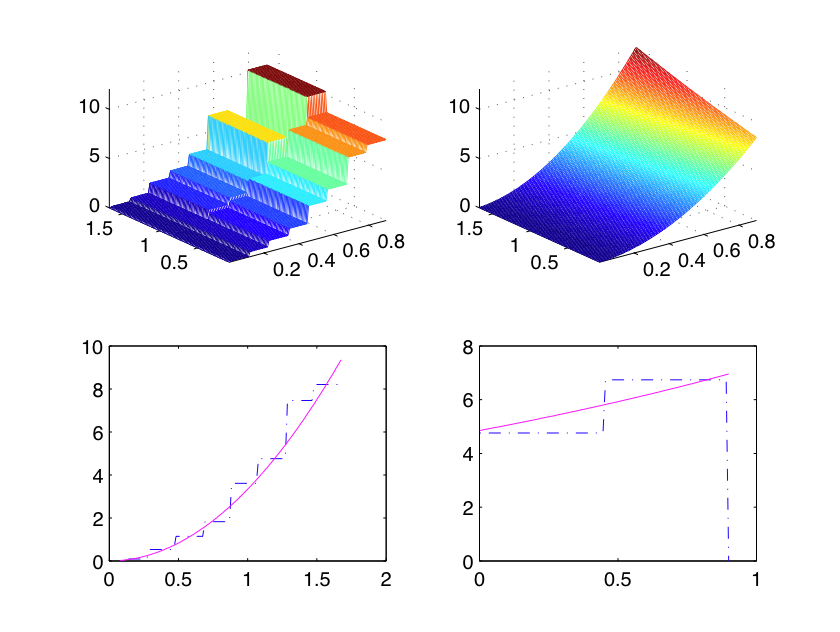}
\caption{Case (PH) Estimated (top left) and true (top right)
  conditional hazard rates and example of sections (bottom) for a
  fixed value of $x$ (left) or $y$ (right).}
\label{fig3}
\end{figure}

\section{Proofs of the main results}
\label{proofs}

\subsection{Proof of Theorem \ref{main}}

We define, for $h_1,h_2$ in $L^2 \cap L^{\infty}(A)$,
the empirical scalar product
\begin{eqnarray}
  \langle h_1,h_2 \rangle_n=\frac 1n \sum_{i=1}^n
  \int_0^1 h_1(X_i,z)h_2(X_i,z)Y^i(z) dz \1( X_i \in [0, 1] )
\end{eqnarray}
and the associated empirical norm $\| h_1 \|_n^2=\langle h_1,h_1 \rangle_n$
which is such that $${\mathbb E}(\|h_1\|_n^2)= \iint_A h_1^2(x,y)d\mu(x,y)
=\iint_A h_1^2(x,y)f(x,y)dxdy=\|h_1\|^2_{\mu}$$ where we recall that $f$
denotes the density of $\mu$ w.r.t. the Lebesgue measure on $A$. We
shall use the following sets:
\begin{eqnarray}
&& \hat\Gamma_m=\{\min{\rm Sp}(G_m)\geq \max(\hat{f}_0
/3,n^{-1/2})\}, \;\; \hat \Gamma := \bigcap_{m\in {\mathcal M}_n}
\hat\Gamma_m,\nonumber \\
&& \label{deltaset} \Delta : =\Big\{ \forall h \in {\mathcal
    S_n} : \Big|\frac{\|h\|_n^2}{\|h\|_{\mu}^2}-1\Big|  \leq \frac 12
\Big\},\text{ and }
\Omega := \Big\{\Big|\frac{\hat f_0}{f_0}-1\Big| \leq \frac
  12\Big\}.
\end{eqnarray}
For $m \in {\mathcal M}_n$, we denote by $\alpha_m$ the orthogonal
projection on $S_m$ of $\alpha$ restricted to $A$. The following
bounds hold:
\begin{eqnarray}\nonumber
{\mathbb E}(\| \hat \alpha_{\hat m} - \alpha \|^2_A) &\leq &
2\|\alpha - \alpha_m \|_A^2 + 2{\mathbb E}(\|\hat\alpha_{\hat
  m}-\alpha_m\|^2_A \1(\Delta \cap \Omega))\\ \nonumber
&  +&2 {\mathbb E}(\|\hat\alpha_{\hat m}
-\alpha_m\|_A^2\1(\Delta^\complement\cap \Omega )) +
2{\mathbb E}(\|\hat\alpha_{\hat m}-\alpha_m\|_A^2\1(\Omega^\complement))
\\ \nonumber &\leq &
2\|\alpha-\alpha_m\|_A^2 + 2{\mathbb E}(\|\hat\alpha_{\hat m}
-\alpha_m\|_A^2\1(\Delta\cap \Omega))\\ \label{etap1}
&  +&4 {\mathbb E}((\|\hat\alpha_{\hat m}\|^2
+\|\alpha\|^2_A)\1(\Delta^\complement\cap \Omega))
+4 {\mathbb E}((\|\hat\alpha_{\hat m}\|^2 +\|\alpha\|^2_A)\1(\Omega^\complement)).
\end{eqnarray}
\noindent We use the following results, whose proofs can be found in
Sections~\ref{proptala} and~\ref{auxiliary}.
\begin{prop}\label{alpha4}
We have ${\mathbb E}(\|\hat \alpha_{\hat m}\|^4)\leq C' n^5,$
where $C'$ is a constant.
\end{prop}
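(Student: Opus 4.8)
The plan is to bound $\E(\|\hat\alpha_{\hat m}\|^4)$ by a crude but sufficient polynomial in $n$, using only elementary linear algebra and moment bounds. The quantity $\|\hat\alpha_{\hat m}\|^2$ equals $\|\hat\alpha_{\hat m}\|_A^2$ since $\hat\alpha_{\hat m}$ is supported on $A$, and on the event $\hat\Gamma_{\hat m}$ we have $\hat\alpha_{\hat m} = \sum_{(j,k)} \hat a_{j,k}^{\hat m}\varphi_j^{\hat m}\psi_k^{\hat m}$ with coefficient matrix $\hat A_{\hat m} = G_{\hat m}^{-1}\Upsilon_{\hat m}$; off $\hat\Gamma_{\hat m}$ it is zero, so we may restrict to $\hat\Gamma_{\hat m}$ throughout. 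Since the bases are orthonormal, $\|\hat\alpha_{\hat m}\|_A^2 = \sum_{(j,k)} (\hat a_{j,k}^{\hat m})^2 = \|\hat A_{\hat m}\|_{\mathrm{Frob}}^2 \leq \|G_{\hat m}^{-1}\|_{\mathrm{op}}^2 \|\Upsilon_{\hat m}\|_{\mathrm{Frob}}^2$. On $\hat\Gamma_{\hat m}$, by definition $\min\mathrm{Sp}(G_{\hat m}) \geq n^{-1/2}$, hence $\|G_{\hat m}^{-1}\|_{\mathrm{op}} \leq n^{1/2}$, giving $\|\hat\alpha_{\hat m}\|_A^2 \leq n\,\|\Upsilon_{\hat m}\|_{\mathrm{Frob}}^2$ on $\hat\Gamma_{\hat m}$.

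Next I would control $\|\Upsilon_{\hat m}\|_{\mathrm{Frob}}^2 = \sum_{(j,k)} \big(\frac1n\sum_{i=1}^n \varphi_j^{\hat m}(X_i)\int \psi_k^{\hat m}(z)\,dN^i(z)\big)^2$. To avoid dealing with the random index $\hat m$, I would bound this by a sum over the fixed global nesting space $\mathcal S_n$: writing $\Upsilon_{\mathcal S_n}$ for the analogous vector built from an orthonormal basis of $\mathcal S_n$, the nesting condition $(\mathcal M3)$ gives $\|\Upsilon_{\hat m}\|_{\mathrm{Frob}}^2 \leq \|\Upsilon_{\mathcal S_n}\|_{\mathrm{Frob}}^2$ for every realization of $\hat m$ (a projection-coefficient inequality since $S_{\hat m}\subset \mathcal S_n$). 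Using the Cauchy--Schwarz inequality on the time integral, $\big(\int \psi_k(z)\,dN^i(z)\big)^2 \leq N^i(1)\int (\psi_k)^2(z)\,dN^i(z)$, and summing over $k$ via the localization-type inequality $\sum_k (\psi_k(z))^2 \leq \phi_2 D^{(2)}_n$ from $(\mathcal M2)$ — or, more directly, bounding each basis function through $(\mathcal M2)$ — yields a bound of the form $\|\Upsilon_{\mathcal S_n}\|_{\mathrm{Frob}}^2 \leq C\,\phi_0^2 N_n\cdot \frac1n\sum_{i=1}^n (N^i(1))^2$ for a numerical constant $C$, since $\dim\mathcal S_n = N_n \leq \sqrt{n/\log n}$.

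It then remains to take fourth moments. We get $\E(\|\hat\alpha_{\hat m}\|^4) \leq n^2\,\E(\|\Upsilon_{\mathcal S_n}\|_{\mathrm{Frob}}^4) \leq C\,\phi_0^4 N_n^2 n^2\,\E\big((\frac1n\sum_i (N^i(1))^2)^2\big) \leq C\,\phi_0^4 N_n^2 n^2\cdot \frac1n\sum_i \E((N^i(1))^4)$ by Jensen (or convexity of $t\mapsto t^2$). The $N^i(1)$ are i.i.d.\ (equal in law for all $i$), so this is $C\,\phi_0^4 N_n^2 n^2\,\E((N^1(1))^4)$. Since $N_n^2 \leq n/\log n \leq n$, this is at most $C\,\phi_0^4\,\E((N^1(1))^4)\,n^3$, which is even sharper than $n^5$; the statement with $n^5$ is comfortably true. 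The one point needing a word of justification is that $\E((N^1(1))^4) < \infty$: this follows from the Aalen model, because $N(1) - \Lambda(1) = M(1)$ is a martingale with $\Lambda(1) = \int_0^1 \alpha(X,z)Y(z)\,dz \leq \|\alpha\|_{\infty,A}$ bounded by $(\mathcal A3)$--$(\mathcal A4)$, so the predictable quadratic variation $\langle M\rangle(1) = \Lambda(1)$ is bounded, and the Burkholder--Davis--Gundy inequality (or iterated use of Doob's inequality, noting the jumps of $M$ are of size one) bounds all moments of $M(1)$, hence of $N(1)$, by constants depending only on $\|\alpha\|_{\infty,A}$.

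The only genuine obstacle is bookkeeping: keeping the random index $\hat m$ under control, which I handle by dominating everything by the fixed nesting space $\mathcal S_n$ via $(\mathcal M3)$, and checking that the factor $n$ from $\|G_{\hat m}^{-1}\|_{\mathrm{op}}^2 \leq n$ (the whole point of the $n^{-1/2}$ threshold in the definition of $\hat\Gamma_m$) combines with $\dim\mathcal S_n \leq \sqrt{n/\log n}$ to leave a polynomial power of $n$. No sharp constants are needed; $n^5$ is a deliberately generous target, which is all that the subsequent use of Proposition~\ref{alpha4} (controlling the contribution of the small-probability events $\Delta^\complement$ and $\Omega^\complement$ in \eqref{etap1}) requires, given the super-polynomial decay of those probabilities guaranteed by $(\mathcal A5)$ and the deviation inequalities.
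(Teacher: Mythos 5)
Your proof is correct, and it takes a genuinely different (and cleaner) route than the paper, arriving at the sharper bound $Cn^3$ rather than the paper's $Cn^{4.5}$.

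The paper works directly with the random index $\hat m$: it bounds $\|\Upsilon_{\hat m}\|^2 \le \phi_1\mathcal D_n^{(1)} \cdot \frac1n\sum_i\sum_k\bigl(\int\psi_k\,dN^i\bigr)^2$, squares, passes through $\sum_k(\cdot)^2\bigr)^2 \le \mathcal D_n^{(2)}\sum_k(\cdot)^4$ by Cauchy--Schwarz, and then controls $\E\bigl(\int\psi_k\,dM^i\bigr)^4$ term by term via the B\"urkholder inequality for martingales (Liptser--Shiryayev), carrying the two dimensions $\mathcal D_n^{(1)}, \mathcal D_n^{(2)}$ explicitly to the end. You instead dispose of the randomness of $\hat m$ at the outset by the Riesz/projection argument: since the entries of $\Upsilon_m$ are the $L^2(A)$-coefficients along $S_m$ of the linear functional $L(h)=\tfrac1n\sum_i\int h(X_i,z)\,dN^i(z)$ restricted to the finite-dimensional space $\mathcal S_n$, the nesting $S_{\hat m}\subset\mathcal S_n$ gives $\|\Upsilon_{\hat m}\|^2 = \|\Pi_{S_{\hat m}}\ell_L\|^2 \le \|\ell_L\|^2 = \|\Upsilon_{\mathcal S_n}\|^2$ deterministically. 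You then use the Cauchy--Schwarz inequality with respect to $dN^i$ plus the reproducing-kernel bound $\sum_k\psi_k^2 \le \phi_2\mathcal D_n^{(2)}$ to reduce everything to a single scalar moment $\E\bigl(N^1(1)^4\bigr)$, which you control exactly as the paper would (B\"urkholder/BDG with bounded jumps and bounded predictable variation). Your approach buys a simpler bookkeeping (one fixed space rather than a random one), the sharper power $n^3$, and it isolates the probabilistic input to a single, easily stated moment bound on $N^1(1)$; the paper's approach keeps the per-basis-function structure, which is slightly more in the spirit of its other computations but costs extra powers of the dimensions. Both proofs share the same implicit integrability assumption (finiteness of moments of $N^1(1)$, equivalently boundedness of the compensator on $[0,1]$), which the paper also uses without further comment, so this is not a gap specific to you. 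Since the statement only needs some fixed polynomial power, $n^5$ is comfortably established either way.
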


\begin{prop}
\label{deltacomp}
If $(\mathcal M_1)$ is fulfilled, we have ${\mathbb P}(\Delta^\complement)\leq
C_k / n^k$ for any $k \geq 1$, when $n$ is large enough, where $C_k$ is a constant.
\end{prop}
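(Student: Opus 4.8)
The plan is to translate the event $\Delta^\complement$ into a deviation of the operator norm of a centered sum of i.i.d.\ random matrices, to bound its variance and uniform-norm parameters using $(\mathcal M2)$ and $(\mathcal A2)$, and to conclude by a matrix Bernstein inequality together with the smallness of $\dim\mathcal S_n$ imposed by $(\mathcal M1)$.

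First, fix an $L^2(dx\,dz)$-orthonormal basis $(\phi_\lambda)_{\lambda\in\Lambda_n}$ of $\mathcal S_n$, built from the tensor products $\varphi_j^m\otimes\psi_k^m$, and set $N_n=|\Lambda_n|=\dim\mathcal S_n$. For $h=\sum_\lambda a_\lambda\phi_\lambda\in\mathcal S_n$ one has $\|h\|_n^2=\transpose a\widehat G a$ and $\|h\|_\mu^2=\transpose a G a$, where $\widehat G=(\langle\phi_\lambda,\phi_{\lambda'}\rangle_n)_{\lambda,\lambda'}$ and $G=\E[\widehat G]=(\iint_A\phi_\lambda\phi_{\lambda'}f)_{\lambda,\lambda'}$. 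Since the $\phi_\lambda$ are supported in $A$ and orthonormal for Lebesgue measure, $(\mathcal A2)$ yields $\transpose a G a=\|h\|_\mu^2\ge f_0\|h\|_A^2=f_0|a|^2$, so $G$ is invertible with $\lambda_{\min}(G)\ge f_0$. Hence
\[
\Delta^\complement=\Big\{\sup_{0\neq h\in\mathcal S_n}\Big|\frac{\|h\|_n^2}{\|h\|_\mu^2}-1\Big|>\tfrac12\Big\}
=\Big\{\big\|G^{-1/2}(\widehat G-G)G^{-1/2}\big\|_{\mathrm{op}}>\tfrac12\Big\}.
\]

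Second, write $G^{-1/2}(\widehat G-G)G^{-1/2}=\frac1n\sum_{i=1}^nY_i$ with $Y_i=G^{-1/2}(W_i-\E W_1)G^{-1/2}$ and $W_i=\big(\int\phi_\lambda(X_i,z)\phi_{\lambda'}(X_i,z)Y^i(z)\,dz\big)_{\lambda,\lambda'}\succeq0$, the $Y_i$ being i.i.d.\ and centered. Using the tensor structure of the basis together with $(\mathcal M2)$ (which gives $\sum_j\varphi_j(x)^2\le\phi_1 D$ and $\sum_k\psi_k(z)^2\le\phi_2 D'$ pointwise) and $\int_{A_2}Y^i\le1$, one gets $\mathrm{tr}(W_i)\le\phi_1\phi_2 N_n=\phi_0^2 N_n$; since $W_i\succeq0$ this gives $\|W_i\|_{\mathrm{op}}\le\phi_0^2 N_n$ and $W_i^2\preceq\phi_0^2 N_n\,W_i$, whence $\|Y_i\|_{\mathrm{op}}\le2\phi_0^2 N_n/f_0$ and $\E[Y_1^2]\preceq f_0^{-1}G^{-1/2}\E[W_1^2]G^{-1/2}\preceq(\phi_0^2 N_n/f_0)\,I$, so $\|\E[Y_1^2]\|_{\mathrm{op}}\le\phi_0^2 N_n/f_0$. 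A matrix Bernstein inequality then yields
\[
\mathbb P(\Delta^\complement)\le2N_n\exp\!\Big(-\frac{c\,n}{N_n}\Big)
\]
for a constant $c=c(\phi_0,f_0)>0$. By $(\mathcal M1)$ one has $N_n\le\mathcal D_n^{(1)}\mathcal D_n^{(2)}\le\sqrt n/\log n$, hence $n/N_n\ge\sqrt n\log n$ and
\[
\mathbb P(\Delta^\complement)\le\frac{2\sqrt n}{\log n}\,\exp\!\big(-c\sqrt n\log n\big),
\]
which tends to $0$ faster than any negative power of $n$: for every $k\ge1$ there are $C_k$ and $n_0(k)$ with $\mathbb P(\Delta^\complement)\le C_k/n^k$ for $n\ge n_0(k)$, which is the assertion.

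The main obstacle is obtaining a \emph{super-polynomial} bound rather than a merely polynomial one, i.e.\ controlling the deviation \emph{uniformly} over the $N_n$-dimensional space $\mathcal S_n$ with an exponential rate that beats the entropy cost $\exp(O(N_n))$ of covering the unit sphere; this is exactly what forces the constraint $N_n^2=o(n)$ coming from $(\mathcal M1)$, and is what the matrix Bernstein bound makes quantitative. An equivalent route avoiding matrix concentration is a chaining argument for $\E\sup_{h\in\mathcal S_n,\,\|h\|_\mu=1}|\|h\|_n^2-1|$ using the $L^2$–$L^\infty$ link $\|h\|_{\infty,A}\le\phi_0\sqrt{N_n/f_0}$ from $(\mathcal M2)$–$(\mathcal A2)$ (showing this expectation is $o(1)$, hence $\le1/4$ for $n$ large), followed by Talagrand's inequality for the supremum of the bounded empirical process $h\mapsto\|h\|_n^2-\|h\|_\mu^2$, whose variance and sup-norm parameters are both $O(N_n)$. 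In either route one must check that the variance parameter is of order $N_n$ and not $N_n^2$ (where the inequality $W_1^2\preceq\mathrm{tr}(W_1)W_1$ is used), and that the supremum defining $\Delta^\complement$ is measurable, which holds since it equals the operator norm of a continuous function of $\widehat G$.
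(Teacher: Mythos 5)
Your proof is correct, but it takes a genuinely different route from the paper's. The paper writes the supremum of $\big|\|h\|_n^2/\|h\|_\mu^2-1\big|$ over $h\in\mathcal S_n$ as a quadratic form $\sum_{\lambda,\lambda'}a_\lambda a_{\lambda'}\vartheta_n(\phi_\lambda\phi_{\lambda'})$ on the coefficient vector, controls the basis through a spectral quantity $L(\varphi)$ borrowed from a lemma of Baraud, Comte and Viennet (distinguishing between general and localized bases to sharpen the constant), applies a scalar Bernstein inequality to each of the $\sim N_n^2$ random entries $\vartheta_n(\phi_\lambda\phi_{\lambda'})$, and finishes with a union bound, getting a tail like $n^2\exp(-c\log^2 n)$ under $(\mathcal M1)$. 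You instead recognize the same supremum as the operator norm $\|G^{-1/2}(\widehat G-G)G^{-1/2}\|_{\mathrm{op}}$ of a normalized sum of i.i.d.\ centered PSD-shifted random matrices and invoke a matrix Bernstein inequality in one shot. Your operator-norm and variance bounds $\|Y_i\|\lesssim N_n/f_0$ and $\|\E Y_1^2\|\lesssim N_n/f_0$ are obtained cleanly from the trace bound $\mathrm{tr}(W_i)\le\phi_0^2 N_n$ (via $(\mathcal M2)$ and $Y^i\le1$), the PSD inequality $W_i^2\preceq\|W_i\|W_i$, and $G\succeq f_0 I$ from $(\mathcal A2)$, yielding the sharper tail $N_n\exp(-cn/N_n)\lesssim\sqrt n\exp(-c\sqrt n\log n)$. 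What your approach buys is a cleaner, shorter argument with a dimension prefactor $N_n$ rather than $N_n^2$ and an exponent $n/N_n$ rather than $n/N_n^2$, and it dispenses with the $\|f_X\|_{\infty,A}$ factor that enters the paper's variance control; what it costs is reliance on the matrix Bernstein inequality, a more modern and less elementary tool than the scalar Bernstein-plus-entropy machinery the paper uses, which the authors likely preferred for self-containedness and for consistency with the rest of their chaining-based proofs. One small presentational point: like the paper's own argument, yours implicitly uses $(\mathcal A2)$, $(\mathcal M2)$ and $(\mathcal A4)$ in addition to $(\mathcal M1)$; the proposition's statement names only $(\mathcal M1)$, but this is an imprecision in the paper, not a gap in your proof.
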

Moreover, $({\mathcal A 5})$ ensures that ${\mathbb P}(\Omega^\complement)\leq C_k/n^k$ for any integer $k$.
Thus, using Propositions~\ref{alpha4} and~\ref{deltacomp} and Assumption $({\mathcal A 5})$, we get
\begin{eqnarray}
\nonumber && {\mathbb E}((\|\hat\alpha_{\hat m}\|^2
+\|\alpha\|^2_A)\1(\Delta^\complement\cap \Omega)) + {\mathbb
  E}((\|\hat\alpha_{\hat m}\|^2 + \|\alpha\|^2_A) \1(\Omega^\complement)) \\
\nonumber & \leq & \|\alpha\|_A^2({\mathbb
  P}(\Omega^\complement)+ {\mathbb P}(\Delta^\complement)) + {\mathbb E}^{1/2}(\|\hat
\alpha_{\hat m}\|^4)({\mathbb P}^{1/2}(\Omega^\complement)+ {\mathbb
  P}^{1/2}(\Delta^\complement)) \\ \label{etap2}&\leq & C_2/n.
\end{eqnarray}
Thus it remains to study ${\mathbb E}(\|\hat\alpha_{\hat m}
-\alpha_m\|_A^2\1(\Delta\cap \Omega))$. We state the following Lemma:
\begin{lem}
\label{inclus}
The following embedding holds:
\begin{equation*}
  \Delta \cap \Omega \subset \hat
  \Gamma \cap \Omega.
\end{equation*}
As a consequence, for all $m \in \mathcal M_n$, the matrices $G_m$ are invertible  on $\Delta\cap \Omega$.
\end{lem}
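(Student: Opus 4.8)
The plan is to translate the event $\Delta$ into a deterministic lower bound on the smallest eigenvalue of every Gram matrix $G_m$, and then to check that this bound dominates $\max(\hat f_0/3,\,n^{-1/2})$ on $\Omega$. First I would record the algebraic identity relating $G_m$ to the empirical norm. If $h=\sum_{j\in J_m}\sum_{k\in K_m} a_{j,k}\varphi_j^m(x)\psi_k^m(z)\in S_m$ and $\bs a=(a_{j,k})_{j,k}$ denotes its coefficient vector, then, expanding $h^2$ and using the definition of $G_m$,
\[
\|h\|_n^2=\frac1n\sum_{i=1}^n\int_0^1 h^2(X_i,z)Y^i(z)\,dz\,\1(X_i\in[0,1])=\transpose{\bs a}\,G_m\,\bs a,
\]
the indicator being harmless since each $\varphi_j^m$ is supported in $A_1=[0,1]$. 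Because $(\varphi_j^m)_j$ and $(\psi_k^m)_k$ are orthonormal with supports in $A_1$ and $A_2$, one also has $\|h\|_A^2=\sum_{j,k}a_{j,k}^2=\|\bs a\|^2$. Since $G_m$ is symmetric and nonnegative, the Rayleigh quotient characterization gives $\min{\rm Sp}(G_m)=\inf_{\bs a\neq 0}\transpose{\bs a}G_m\bs a/\|\bs a\|^2=\inf_{h\in S_m\setminus\{0\}}\|h\|_n^2/\|h\|_A^2$.

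Next I would invoke the nesting condition $(\mathcal M3)$, namely $S_m\subset\mathcal S_n$, together with the event $\Delta$. For every nonzero $h\in S_m$ we have $\|h\|_\mu^2\geq f_0\|h\|_A^2>0$ by $(\mathcal A2)$, so the ratio $\|h\|_n^2/\|h\|_\mu^2$ is well defined, and on $\Delta$ it is at least $1/2$; hence
\[
\|h\|_n^2\geq \tfrac12\|h\|_\mu^2\geq \tfrac{f_0}{2}\|h\|_A^2\qquad\text{for all }h\in S_m.
\]
Combined with the previous step, this gives $\min{\rm Sp}(G_m)\geq f_0/2$ on $\Delta$, for every $m\in\mathcal M_n$. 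It then remains to compare $f_0/2$ with $\max(\hat f_0/3,\,n^{-1/2})$: on $\Omega$ we have $\hat f_0\leq \tfrac32 f_0$, hence $\hat f_0/3\leq f_0/2$; and for $n$ large enough (namely $n\geq 4/f_0^2$) one also has $n^{-1/2}\leq f_0/2$. Therefore, on $\Delta\cap\Omega$, $\min{\rm Sp}(G_m)\geq\max(\hat f_0/3,\,n^{-1/2})$ for every $m$, i.e. $\Delta\cap\Omega\subset\bigcap_{m\in\mathcal M_n}\hat\Gamma_m=\hat\Gamma$, so that $\Delta\cap\Omega\subset\hat\Gamma\cap\Omega$. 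Finally, since on $\Delta\cap\Omega$ all eigenvalues of $G_m$ are bounded below by $f_0/2>0$, each $G_m$ is invertible there, which is the announced consequence.

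I do not expect a serious obstacle: the argument is essentially the eigenvalue reformulation of $\Delta$ through the identity $\|h\|_n^2=\transpose{\bs a}G_m\bs a$ and the pointwise lower bound $f\geq f_0$. The only point deserving a word of care is the $n^{-1/2}$ floor in the definition of $\hat\Gamma_m$: it is irrelevant on the good event $\Delta\cap\Omega$, where the much stronger bound $f_0/2$ holds once $n$ is large; its presence is justified rather by what happens off that event, since it is precisely this floor that later guarantees the deterministic control $\|G_m^{-1}\|_{\mathrm{op}}\leq n^{1/2}$ on $\hat\Gamma_m$ used in the crude polynomial bound of Proposition~\ref{alpha4}.
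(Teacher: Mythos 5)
Your proof is correct and follows essentially the same route as the paper: identify $\min\mathrm{Sp}(G_m)$ with the Rayleigh quotient $\inf_{h\in S_m\setminus\{0\}}\|h\|_n^2/\|h\|_A^2$, use $\Delta$ together with $(\mathcal A2)$ to obtain $\min\mathrm{Sp}(G_m)\geq f_0/2$, and then use $\Omega$ to dominate $\max(\hat f_0/3,n^{-1/2})$. Your handling of the $\max$ (bounding both terms by $f_0/2$ directly, yielding the threshold $n\geq 4/f_0^2$) is a minor streamlining of the paper's two-step comparison, which first argues $\max(\hat f_0/3,n^{-1/2})=\hat f_0/3$ for $n\geq 36/f_0^2$; the substance is the same.
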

Let us now define the centered empirical process
\begin{align}
 \nonumber \nu_n(h) &= \frac 1n\sum_{i=1}^n \Big(\int h(X_i,z)
 dN^i(z) - \int
 h(X_i,z) \alpha(X_i,z) Y^i(z) dz \Big) \\
 &= \frac 1n\sum_{i=1}^n \int h(X_i,z) d M^i(z),
 \label{eq:empirical_process}
\end{align}
where we use the Doob-Meyer decomposition. For any $h_1, h_2 \in
(L^2 \cap L^\infty)(A)$, we have
\begin{eqnarray*}
\gamma_n(h_1)-\gamma_n(h_2)&=& \|h_1-h_2\|_n^2
+2\langle h_1-h_2, h_2\rangle_n-\frac 2n\sum_{i=1}^n \int
(h_1-h_2)(X_i,z)dN^i(z) \\ &=&  \|h_1-h_2\|_n^2 +2\langle h_1-h_2,
h_2-\alpha \rangle_n -2\nu_n(h_1-h_2).
\end{eqnarray*}
Now, as on $\Delta\cap\Omega$ we have
$$\gamma_n(\hat\alpha_{\hat m})+{\rm pen}(\hat
m)\leq \gamma_n(\alpha_m)+{\rm pen}(m).$$ It follows, from the inequality $2xy\leq x^2/\theta^2 + \theta^2 y^2$, with $x,y,\theta \in \mathbb R^+$, that, on
$\Delta\cap\Omega$,
\begin{eqnarray*}
\|\hat\alpha_{\hat m}-\alpha_m\|_n^2 &\leq & 2\langle
\hat\alpha_{\hat m}-\alpha_m, \alpha  -\alpha_m\rangle_n + {\rm pen}(m)
+ 2\nu_n(\hat\alpha_{\hat m}-\alpha_m)-{\rm pen}(\hat m)\\ &\leq
& \frac 14 \|\hat\alpha_{\hat m}-\alpha_m\|_n^2
+4 \| \alpha -\alpha_m\|_n^2 + {\rm pen}(m) \\ && + \frac 14
\|\hat\alpha_{\hat m}-\alpha_m\|_{\mu}^2 +
4\sup_{h\in B_{m,\hat m}^{\mu}(0,1)}\nu_n^2(h) -{\rm pen}(\hat m),
\end{eqnarray*}
where $B_{m,m'}^{\mu}(0,1) := \{ h\in S_m+S_{m'} : \|h\|_{\mu} \leq
1\}$. This yields
\begin{eqnarray*}
\frac 34\|\hat\alpha_{\hat m}-\alpha_m\|_n^2 &\leq &
4 \| \alpha -\alpha_m\|_n^2 + {\rm pen}(m) + \frac 14
\|\hat\alpha_{\hat m}-\alpha_m\|_{\mu}^2 \\ && +
4 \Big( \sup_{h\in B_{m, \hat m}^{\mu}(0,1)}\nu_n^2(h) - p(m,\hat
m) \Big) + 4p(m,\hat m) - {\rm pen}(\hat m).
\end{eqnarray*}
Now, let us choose the penalty such that \begin{eqnarray}
\forall m, m', \;
4p(m,m')\leq {\rm pen}(m)+ {\rm pen}(m'),
\end{eqnarray}
and use the definition of
$\Delta$. We obtain on $\Delta \cap \Omega$:
\begin{eqnarray*}
\frac 12\|\hat\alpha_{\hat
  m}-\alpha_m\|_{\mu}^2 & \leq &
4 \| \alpha  - \alpha_m\|_n^2 + 2{\rm pen}(m) \\ && + \frac 14
\|\hat\alpha_{\hat m} - \alpha_m \|_{\mu}^2 + 4 \sum_{m' \in {\mathcal
    M}_n} \Big(\sup_{h\in B_{m, m'}^{\mu}(0,1)}\nu_n^2(h) - p(m, m')
\Big)
\end{eqnarray*}
and thus on $\Delta \cap \Omega$:
\begin{eqnarray*}
\frac 14 \|\hat \alpha_{\hat m} - \alpha_m \|_{\mu}^2  &\leq &
4 \| \alpha - \alpha_m \|_n^2 + 2{\rm pen}(m) \\ && +
4 \sum_{m' \in {\mathcal M}_n} \Big( \sup_{h \in B_{m,
    m'}^{\mu}(0,1)} \nu_n^2(h) - p(m, m') \Big).
\end{eqnarray*}
Using the following proposition, we can achieve the proof of
Theorem~\ref{main}.
\begin{prop}
\label{tala}
Let
\begin{equation*}
  p(m,m') = \kappa (1 + \| \alpha \|_{\infty,A} )\frac{D_m +
    D_{m'}}{n}
\end{equation*}
where $C_0$ is a numerical constant.  Under the assumptions of
Theorem~\ref{main}, we have
\begin{equation*}
  \sum_{m'\in {\mathcal M}_n}{\mathbb E} \Big( \sup_{h\in
    B_{m,m'}^{\mu}(0,1)} (\nu_n^2(h) - p(m,m'))_+ \1(\Delta) \Big)
  \leq \frac{C_1} n.
\end{equation*}

\end{prop}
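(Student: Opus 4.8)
The plan is to control the supremum of the squared martingale empirical process $\nu_n^2(h)$ over the unit $\mu$-ball of $S_m + S_{m'}$ by applying a Talagrand-type concentration inequality, combined with a chaining argument in the $L^2 - L^\infty$ geometry. First I would fix $m, m'$ and set $S := S_m + S_{m'} \subset \mathcal S_n$, so that $D := D_m + D_{m'}$ bounds $\dim(S)$ up to a constant, and note that $\nu_n$ is linear in $h$. The quantity $\sup_{h \in B^\mu_{m,m'}(0,1)} \nu_n^2(h)$ is therefore the squared norm of a random linear functional on a finite-dimensional space, and a standard step is to rewrite it (using an orthonormal basis of $S$ for the $\|\cdot\|_\mu$ inner product) as $\sum_\lambda \nu_n^2(\phi_\lambda)$, which gives a first crude bound on the expectation of order $D/n$, but with the wrong (non-exponential) tail. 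On the event $\Delta$, the empirical norm $\|\cdot\|_n$ and the $\mu$-norm are equivalent up to a factor $2$, which is what lets us pass freely between the two geometries.

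Next I would apply Talagrand's inequality for suprema of empirical processes — more precisely its version for martingales / point processes, e.g. the Bernstein-type inequality for $\nu_n(h) = \frac1n\sum_i \int h(X_i,z)\,dM^i(z)$ — to the centered process $\sup_{h \in B} (\nu_n(h) - \mathbb E[\cdot])$. This requires the three usual ingredients: (i) a bound $H$ on $\mathbb E \sup_{h\in B} |\nu_n(h)|$, obtained by the chaining argument alluded to in the text and controlled by something like $\sqrt{D/n}$ times a factor involving $\|\alpha\|_{\infty,A}$; (ii) a bound $v$ on the variance term $\sup_h \frac1n \sum_i \mathrm{Var}(\int h\,dM^i)$, which by the predictable variation of $M^i$ and $(\mathcal A3)$–$(\mathcal A4)$ is of order $(1+\|\alpha\|_{\infty,A})/n$ uniformly over $B$; and (iii) a bound $b$ on $\sup_h \|h\|_{\infty}$, which by $(\mathcal M2)$ (relation \eqref{M2}) and the norm equivalence on $\Delta$ is of order $\sqrt{D_{m_1} D_{m_2}}$, hence at most $\sqrt{n/\log n}$ by $(\mathcal M1)$–$(\mathcal M3)$. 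Plugging these into Talagrand's inequality yields, for a suitable numerical $\kappa$, a bound of the form $\mathbb P(\sup_{h\in B}\nu_n^2(h) > p(m,m') + u) \leq C_1 \exp(-c_1 n u / (1+\|\alpha\|_{\infty,A})) + C_2\exp(-c_2\sqrt{u}\,\sqrt{n\log n})$, where $p(m,m') = \kappa(1+\|\alpha\|_{\infty,A})(D_m + D_{m'})/n$ absorbs the main term $H^2$.

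Finally I would integrate the tail: $\mathbb E[(\sup_h \nu_n^2(h) - p(m,m'))_+ \1(\Delta)] \leq \int_0^\infty \mathbb P(\sup_h \nu_n^2(h) > p(m,m') + u,\ \Delta)\,du$, which the two exponential terms bound by $C/n$ times factors decaying in $D_m + D_{m'}$ — the key point being that the constant in front of $D$ inside $p(m,m')$ is chosen large enough (this fixes $\kappa$) that the linear exponential term contributes a summable series $\sum_{m'} e^{-c D_{m'}}$ of total order $1/n$, and the subexponential term, thanks to the $\sqrt{n\log n}$ factor from $(\mathcal M1)$, is negligible. Summing over $m' \in \mathcal M_n$ then gives the stated $C_1/n$. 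The main obstacle is step (i): obtaining a sharp enough bound on $\mathbb E\sup_{h\in B}|\nu_n(h)|$ requires the $L^2$–$L^\infty$ chaining (Dudley-type entropy integral against both the $\mu$-metric and the sup-metric), which is exactly the delicate deviation computation deferred to Section~\ref{sec:deviation}; everything else is bookkeeping with the constants to make sure the numerical $\kappa$ in $p(m,m')$ dominates the chaining constant and that the summability over $\mathcal M_n$ goes through.
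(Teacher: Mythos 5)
You correctly identify the right ingredients: a Bernstein-type deviation inequality for the martingale process $\nu_n$, an $L^2$--$L^\infty$ chaining in the space $S_m + S_{m'}$, control of the $L^\infty/L^2$ norm connection via $(\mathcal M1)$--$(\mathcal M3)$, and tuning $\kappa$ so that $p(m,m')$ dominates the chaining constant and makes $\sum_{m'} e^{-cD_{m'}}$ summable. But the way you propose to assemble them does not work, and it is precisely the issue the paper flags at the start of Section~\ref{sec:deviation}: one \emph{cannot} directly invoke Talagrand's concentration inequality (or an off-the-shelf variant of it) for this process. The functional $\nu_n(h) = \frac{1}{n}\sum_i \int h(X_i,z)\,dM^i(z)$ is a sum of stochastic integrals against the martingales $M^i$, not a sum of i.i.d.\ bounded functions of $(X_i, N^i)$, and no Talagrand-type supremum concentration with the usual $(H, v, b)$ triple is available in this form. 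Your step (i) (bound $\mathbb E\sup_h |\nu_n(h)|$ by chaining) followed by ``feed into Talagrand'' is therefore a step that would fail, not mere bookkeeping.

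The paper's route is more elementary and avoids concentration-of-measure entirely: Lemma~\ref{martingale} gives a Bennett/Bernstein tail bound for a \emph{single} function $h$ (on the event $\|h\|_n \leq \delta$), obtained via the Cram\'er--Chernoff method and a supermartingale argument in the spirit of van de Geer. Lemma~\ref{propsup} then builds the \emph{maximal} deviation inequality directly: using the Barron--Birg\'e--Massart covering lemma it constructs nested $\delta_k$-nets $T_k$ of the unit $\mu$-ball, writes $h = h_0 + \sum_{k\geq 1}(h_k - h_{k-1})$, and applies the pointwise Bernstein inequality~\eqref{bernineg2} to each chain increment $h_k - h_{k-1}$. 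The key quantity $\bar r_{m,m'}$ in~\eqref{rbar}, which depends on the basis ([DP], [T], [W]) and involves $N_n$ and $f_0$, controls the $L^\infty$ size of the increments; under $(\mathcal M1)$ this contribution is of lower order than $D(m,m')/n$. Summing the tails along the chain (with well-chosen weights $x_k$) yields a deviation probability $\lesssim e^{-D_{m'}-u}$ \emph{directly} for the supremum, whose integral over $u$ and sum over $m'$ gives $C_1/n$. In short: chaining is not a preliminary step to prepare a Talagrand application, it \emph{replaces} Talagrand here. You should reorganize your argument so that the pointwise Bernstein inequality is applied to chain increments and the union bound over nets produces the exponential tail, rather than trying to invoke a concentration theorem for the supremum.
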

This proposition entails:
\begin{equation}
\label{fin}
\frac 14 {\mathbb E}(\|\hat\alpha_{\hat
  m}-\alpha_m\|_{\mu}^2 \1(\Delta\cap\Omega)) \leq
4 \| \alpha -\alpha_m\|_{\mu}^2 + 2{\rm pen}(m) + \frac{C_1}n.
\end{equation}
Gathering~\eqref{etap1}, \eqref{etap2} and~\eqref{fin} leads to
\begin{eqnarray}
  \nonumber
  {\mathbb E}(\| \hat \alpha_{\hat m} - \alpha \|^2_A) &\leq &
  2 \| \alpha_m - \alpha \|_A^2 + \frac{8}{f_0}\Big(4 \| \alpha  -
  \alpha_m \|_{\mu}^2 + 2{\rm pen}(m) +  \frac{C_1}n \Big)  +
  \frac{C_2}n  \\ &\leq & 2 \Big(1 + \frac{16 \|f_X\|_{A,\infty}}{f_0} \Big)
  \|\alpha_m - \alpha \|_A^2 +\frac{16}{f_0}{\rm pen}(m) + \frac{C_3}n
\end{eqnarray}
for any $m \in \mathcal M_n$. This concludes the proof of Theorem~\ref{main}. \qed

\subsection{Proof of Corollary \ref{coromain}}\label{coro}

To control the bias term, we state the following lemma proved in \cite{LAC} and following from \cite{HOC} and \cite{NIK}:

\begin{lemn}\cite{LAC} Let  $s$ belong to $B_{2,\infty}^{\boldsymbol{\beta}}(A)$ where $\boldsymbol{\beta}=(\beta_1,\beta_2)$.
We consider that $S_m'$ is one of the following spaces on $A$ of dimension $D_{m_1}D_{m_2}$
:\begin{itemize} \item a space of piecewise polynomials of degrees
bounded by $s_i>\beta_i -1$ ($i=1,2$) based on a partition with
rectangles of sidelengthes $1/D_{m_1}$ and $1/D_{m_2}$, \item a
linear span of $\{\phi_\lambda\psi_\mu, \lambda\in
\cup_0^{m_1}\Lambda(j), \mu\in \cup_0^{m_2}M(k)\}$ where
$\{\phi_\lambda\}$ and $\{\psi_\mu\}$ are orthonormal wavelet
bases of respective regularities $s_1>\beta_1 -1$ and
$s_2>\beta_2 -1$ (here $D_{m_i}=2^{m_i}, i=1,2$), \item the space
of trigonometric polynomials with degree smaller than $D_{m_1}$ in
the first direction and smaller than $D_{m_2}$ in the second
direction.
\end{itemize}
Let $s_m$ be the orthogonal projection of $s$ on $S_m'$.
Then, there exists a positive constant $C_0$ such that
\begin{eqnarray*}
\|s-s_m\|_A=\Big(\int_A|s-s_m|^2\Big)^{1/2}\leq C_0 [ D_{m_1}^{-\beta_1} +D_{m_2}^{-\beta_2} ].
\end{eqnarray*}
\end{lemn}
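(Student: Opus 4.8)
The plan is to deduce the estimate from the tensor-product structure of the three families of spaces together with a one-directional (anisotropic) Jackson inequality, the membership of $s$ in $B_{2,\infty}^{\boldsymbol{\beta}}(A)$ entering only at the very end. First I would record that, in each of the three cases, $S_m'$ is a tensor product $F_{m_1}\otimes H_{m_2}$ of univariate spaces of respective dimensions $D_{m_1}$ and $D_{m_2}$ --- univariate piecewise polynomials of degree $\le s_i$ on a regular partition of $A_i$, univariate wavelet spaces, or univariate trigonometric polynomials --- so that, the univariate bases being orthonormal, the $L^2(A)$-orthogonal projection onto $S_m'$ factorizes as $s_m=P^{(1)}_{m_1}P^{(2)}_{m_2}s$, where $P^{(1)}_{m_1}$ and $P^{(2)}_{m_2}$ are the orthogonal projections acting respectively on the first variable only (onto $F_{m_1}$) and on the second variable only (onto $H_{m_2}$). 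Writing $s-s_m=(s-P^{(1)}_{m_1}s)+P^{(1)}_{m_1}(s-P^{(2)}_{m_2}s)$ and using that $P^{(1)}_{m_1}$ is a contraction on $L^2(A)$, the triangle inequality gives
\[
\|s-s_m\|_A\le \|s-P^{(1)}_{m_1}s\|_A+\|s-P^{(2)}_{m_2}s\|_A,
\]
so it suffices to bound each one-directional error by $C_0 D_{m_i}^{-\beta_i}$.

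Next I would invoke the directional Jackson inequality. Fixing $i=1$ (the case $i=2$ is symmetric) and choosing for $r_1$ the smallest integer larger than $\beta_1$ allowed in the definition of the Besov space, the hypothesis $s_1>\beta_1-1$ forces $s_1\ge r_1-1$, that is, $F_{m_1}$ reproduces the univariate polynomials of degree $\le r_1-1$; the analogous reproduction property holds for the wavelet spaces (through the number of vanishing moments) and for the trigonometric spaces. This is precisely what is needed for the classical direct estimate
\[
\|s-P^{(1)}_{m_1}s\|_A\le C\,\omega_{r_1,1}(s,1/D_{m_1}),
\]
valid for each of the three families; see Nikol'skii \cite{NIK} for the piecewise polynomial and trigonometric cases and Hochmuth \cite{HOC} for the wavelet case, these estimates being gathered in the present tensorized form in \cite{LAC}. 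Finally, since $s\in B_{2,\infty}^{\boldsymbol{\beta}}(A)$, the definition of this space yields $\omega_{r_i,i}(s,t)\le |s|_{B_{2,\infty}^{\boldsymbol{\beta}}(A)}\,t^{\beta_i}$ for $i=1,2$; taking $t=1/D_{m_i}$ and combining with the two displays above gives $\|s-s_m\|_A\le C_0\big(D_{m_1}^{-\beta_1}+D_{m_2}^{-\beta_2}\big)$ with $C_0$ proportional to $|s|_{B_{2,\infty}^{\boldsymbol{\beta}}(A)}$.

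The step I expect to be the real obstacle is the directional Jackson inequality itself. The naive slicing argument --- apply the one-variable Jackson inequality to each section $s(\cdot,x_2)$ and integrate in $x_2$ --- only bounds $\|s-P^{(1)}_{m_1}s\|_A$ by $\big(\int_{A_2}\omega_{r_1}(s(\cdot,x_2),1/D_{m_1})_{L^2(A_1)}^2\,dx_2\big)^{1/2}$, and this quantity in general only majorizes (rather than being majorized by) the directional modulus $\omega_{r_1,1}(s,1/D_{m_1})$ that appears in the paper's definition of $B_{2,\infty}^{\boldsymbol{\beta}}(A)$, because there the supremum over the step is taken inside the spatial integral. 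Reversing the comparison requires passing through averaged moduli of smoothness or $K$-functionals, for which the supremum and the integral may safely be exchanged; this is exactly the analysis carried out in \cite{NIK} and \cite{HOC}, so it is legitimate for the paper simply to quote the ready-made tensorized statement from \cite{LAC}.
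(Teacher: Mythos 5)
The paper does not prove this lemma: it is quoted verbatim from Lacour (2007), with the remark that it ``follows from \cite{HOC} and \cite{NIK},'' so there is no in-paper argument to compare against. Your reconstruction is nonetheless the right one and matches what those references actually do. The factorization $s_m=P^{(1)}_{m_1}P^{(2)}_{m_2}s$ with the telescoping decomposition and the contractivity of the orthogonal projections correctly reduces the bivariate problem to two univariate (directional) Jackson estimates, and the reproduction of low-degree polynomials by each of the three families (degree $\le s_i$ for piecewise polynomials, vanishing moments for wavelets, truncation of the Fourier series for trigonometric polynomials) is precisely the hypothesis under which those directional estimates hold. You also put your finger on the only genuinely delicate step: the crude slice-and-integrate argument produces $\big(\int_{A_2}\sup_{|h|\le t}\|\Delta^{r_1}_{h}s(\cdot,x_2)\|_{L^2(A_1)}^2\,dx_2\big)^{1/2}$, which dominates the paper's $\omega_{r_1,1}(s,t)$ (supremum outside the integral) rather than the other way round, so the direct estimate cannot be obtained by slicing alone. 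The standard way round it is, as you say, to work with averaged moduli or with the equivalent $K$-functional (which is an infimum and therefore commutes well with integration over the transverse variable); this is the content of the Nikol'skii and Hochmuth arguments that \cite{LAC} packages into the tensorized statement. In short, your sketch is correct, and your identification of the sup-versus-integral obstruction is the honest assessment of where the cited references do the real work.
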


If we choose for $S_m$ as one of the $S_m'$s, we can apply the above lemma to the function $\alpha_A$,
the restriction of $\alpha$ to $A$. As $\alpha_m$ has been defined as the orthogonal projection of $\alpha_A$ on $S_m$, we get:
\begin{eqnarray*}\|\alpha-\alpha_m\|_A\leq C_0 [ D_{m_1}^{-\beta_1} +D_{m_2}^{-\beta_2} ].\end{eqnarray*}
Now, according to  Theorem \ref{main}, we obtain:
\begin{eqnarray*}\mathbb{E}\|\hat\alpha_{\hat m}-\alpha\|_A^2 \leq C''\underset{m\in\mathcal{M}_n}{\inf}\Big\{D_{m_1}^{-2\beta_1}
  +D_{m_2}^{-2\beta_2}+\frac{D_{m_1}D_{m_2}}{n}\Big\}.\end{eqnarray*}
In particular, if $m^*=(m_1^*,m_2^*)$ is such that
\begin{eqnarray*}
D_{m_1^*}=\lfloor n^{\frac{\beta_2}{\beta_1+\beta_2+2\beta_1\beta_2}}\rfloor \text{ and }
D_{m_2^*}=\lfloor (D_{m_1^*})^{\frac{\beta_1}{\beta_2}}\rfloor
\end{eqnarray*}
then
\begin{eqnarray*}
\mathbb{E}\|\hat\alpha_{\hat m}-\alpha\|_A^2 \leq
C'''\Big\{D_{m_1^*}^{-2\beta_1} +\frac{D_{m_1^*}^{1+\beta_1/\beta_2}}{n}\Big\}
=O\Big(n^{-\frac{2\beta_1\beta_2}{\beta_1+\beta_2+2\beta_1\beta_2}}\Big)=O(n^{-\frac{2\bar\beta}{2\bar\beta+2}}),\end{eqnarray*}
where the harmonic mean of $\beta_1$ and $\beta_2$ is
$\bar\beta={2\beta_1\beta_2}/({\beta_1+\beta_2}).$
The condition $D_{m_1}\leq n^{1/2}/\log n$  allows this choice of $m$ only if
$\beta_2/(\beta_1+\beta_2+2\beta_1\beta_2)<1/2$ i.e. if $\beta_1-\beta_2+2\beta_1\beta_2>0$.
In the same manner, the condition $\beta_2-\beta_1+2\beta_1\beta_2>0$ must be verified.
Both conditions hold if $\beta_1>1/2$ and $\beta_2>1/2$.

\subsection{Proof of Theorem \ref{thm:lower_bound}}

In order prove Theorem~\ref{thm:lower_bound}, we use the following
theorem from~\cite{tsybakov03}, which is a standard tool for the proof
of such a lower bound. We say that $\partial$ is a
\emph{semi-distance} on some set $\Theta$ if it is symmetric and if it
satisfies the triangle inequality and $\partial(\theta, \theta) = 0$
for any $\theta \in \Theta$. We consider $K(P, Q) := \int \log (dP/dQ)
dP$ the Kullback-Leibler divergence between probability measures $P$
and $Q$ such that $P \ll Q$.

\begin{thmn}[\cite{tsybakov03}]
Let $(\Theta, \partial)$ be a set endowed with a semi-distance
$\partial$. We suppose that $\{ P_\theta : \theta \in \Theta \}$ is
a family of probability measures on a measurable space $(\mathcal X,
\mathcal A)$ and that $v > 0$. If there exist
$\{ \theta_0, \ldots, \theta_M \} \subset \Theta$, with $M \geq 2$,
such that
\begin{enumerate}
\item $\partial(\theta_j, \theta_k) \geq 2 v \quad \forall\; 0 \leq j
  < k \leq M$
\item $P_{\theta_j} \ll P_{\theta_0} \quad \forall\; 1 \leq j \leq M$,
\item $\frac{1}{M} \sum_{j=1}^M K(P_{\theta_j}, P_{\theta_0})\leq a
  \log (M)$ for some $a \in (0, 1/8)$,
\end{enumerate}
then
\begin{equation*}
  \inf_{\hat \theta }\sup_{\theta \in \Theta} E_\theta
  [  ( v^{-1} \partial( \hat \theta, \theta) )^2 ] \geq
  \frac{\sqrt{M}}{1 + \sqrt{M}} \bigg( 1 - 2 a - 2
  \sqrt{ \frac{a}{\log (M)}} \bigg),
\end{equation*}
where the infimum is taken among all estimators.
\end{thmn}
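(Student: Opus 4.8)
The plan is to use the classical reduction of estimation to multiple hypothesis testing, followed by a Fano-type (information-theoretic) lower bound on the minimax testing error controlled by the average Kullback--Leibler divergence; since this statement is the theorem of~\cite{tsybakov03} quoted above, I only outline the argument.

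First I would reduce to a testing problem. Given an arbitrary estimator $\hat\theta$, associate the minimum-distance test $\psi^\ast:=\argmin_{0\le j\le M}\partial(\hat\theta,\theta_j)$, with ties broken arbitrarily, valued in $\{0,\dots,M\}$. If $\partial(\hat\theta,\theta_j)<v$ then, for every $k\ne j$, the triangle inequality and hypothesis~(1) give $\partial(\hat\theta,\theta_k)\ge\partial(\theta_j,\theta_k)-\partial(\hat\theta,\theta_j)>2v-v>\partial(\hat\theta,\theta_j)$, so $\psi^\ast=j$; hence $\{\psi^\ast\ne j\}\subset\{\partial(\hat\theta,\theta_j)\ge v\}$. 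By Markov's inequality, $E_{\theta_j}\bigl[(v^{-1}\partial(\hat\theta,\theta_j))^2\bigr]\ge P_{\theta_j}(\partial(\hat\theta,\theta_j)\ge v)\ge P_{\theta_j}(\psi^\ast\ne j)$. Taking the maximum over $0\le j\le M$ and then the infimum over $\hat\theta$, the claim reduces to
\begin{equation*}
  p_{e,M}:=\inf_{\psi}\ \max_{0\le j\le M}P_{\theta_j}(\psi\ne j)\ \ge\ \frac{\sqrt M}{1+\sqrt M}\Bigl(1-2a-2\sqrt{a/\log M}\Bigr),
\end{equation*}
the infimum running over all tests $\psi\colon\mathcal X\to\{0,\dots,M\}$.

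To bound $p_{e,M}$ from below, fix a test $\psi$, set $A_j=\{\psi=j\}$ (a measurable partition of $\mathcal X$), and note that its maximal error equals $1-\min_{0\le j\le M}P_{\theta_j}(A_j)$, which I now bound. For $j\ge1$, hypothesis~(2) makes $L_j:=\log(dP_{\theta_j}/dP_{\theta_0})$ well defined with $E_{\theta_j}[L_j]=K(P_{\theta_j},P_{\theta_0})=:K_j$. For a truncation level $\tau>1$ I would split
\begin{equation*}
  P_{\theta_j}(A_j)=\int_{A_j}e^{L_j}\,dP_{\theta_0}\ \le\ \tau\,P_{\theta_0}(A_j)+P_{\theta_j}(L_j>\log\tau),
\end{equation*}
average over $j=1,\dots,M$ and use $\sum_{j=1}^M P_{\theta_0}(A_j)\le 1-P_{\theta_0}(A_0)$. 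The tail term is handled by applying Markov to the positive part of $L_j$, together with a Jensen (convexity) bound on its negative part, so that $E_{\theta_j}[(L_j)_+]$ is at most $K_j$ plus a numerical constant; carrying this out with $\bar K:=\tfrac1M\sum_j K_j\le a\log M$ and the choice $\tau=\sqrt M$ yields $\bar\varepsilon:=\tfrac1M\sum_{j=1}^M P_{\theta_j}(L_j>\log\tau)\le 2a+2\sqrt{a/\log M}$. Consequently $\min_j P_{\theta_j}(A_j)\le\min\bigl(P_{\theta_0}(A_0),\ \tfrac{\tau}{M}(1-P_{\theta_0}(A_0))+\bar\varepsilon\bigr)$; the two branches being monotone in opposite directions in the free quantity $P_{\theta_0}(A_0)\in[0,1]$, the worst case is at their crossing, giving $\min_j P_{\theta_j}(A_j)\le\frac{\tau+M\bar\varepsilon}{M+\tau}$, and with $\tau=\sqrt M$,
\begin{equation*}
  p_{e,M}\ \ge\ 1-\frac{\sqrt M+M\bar\varepsilon}{M+\sqrt M}\ =\ \frac{\sqrt M}{1+\sqrt M}\,(1-\bar\varepsilon)\ \ge\ \frac{\sqrt M}{1+\sqrt M}\Bigl(1-2a-2\sqrt{a/\log M}\Bigr).
\end{equation*}
Since $\psi$ was arbitrary, this is the desired bound on $p_{e,M}$, and combined with the first step it proves the theorem; the hypotheses $a<1/8$ and $M\ge2$ make the bracket meaningful (it is positive once $M$ is large).

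The delicate point—hence what I would write most carefully—is the tail estimate for $L_j$: the log-likelihood ratio is neither bounded nor almost surely nonnegative, so a naive Markov inequality does not apply, and one must split $(L_j)_+$ from $(L_j)_-$, control $E_{\theta_j}[(L_j)_-]$ by convexity, and then tune the truncation level $\tau\asymp\sqrt M$ so that the $\tfrac{\tau}{M}$ loss from the change of measure balances the tail loss. This balancing is what produces both the $\sqrt M/(1+\sqrt M)$ prefactor and the $2\sqrt{a/\log M}$ correction; everything else—the minimum-distance reduction, Markov's inequality, and the elementary optimisation over $P_{\theta_0}(A_0)$—is routine.
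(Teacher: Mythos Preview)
The paper does not prove this theorem; it is quoted from \cite{tsybakov03} and used as a black box in the proof of Theorem~\ref{thm:lower_bound}. Your outline---reduction to multiple testing via the minimum-distance rule, then a truncated change-of-measure argument averaged over the hypotheses and optimised in $P_{\theta_0}(A_0)$---is precisely the structure of Tsybakov's own proof and is correct.

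One small imprecision: the tail estimate as you describe it (Markov on $(L_j)_+$ after bounding $E_{\theta_j}[(L_j)_-]$ by a numerical constant via $\log x\le x-1$) yields $\bar\varepsilon\le 2a+O(1/\log M)$, not $2a+2\sqrt{a/\log M}$. The exact $\sqrt{a/\log M}$ form in the stated inequality requires a slightly sharper argument than plain Markov on the positive part. This does not matter for the paper's application, where one only needs the bracket $1-2a-\ldots$ to be bounded away from zero for fixed $a<1/8$ and large $M$; either residual term works.
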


We construct a family of functions $\{ \alpha_0, \ldots, \alpha_M \}$
that satisfies points~(1)--(3). Let $\alpha_0(x, t) = |B|^{-1} \1(t
\in B)$ where $B$ is a compact set such that $A=A_1 \times A_2 \subset
B \times B$ and $|B| \geq 2 |A|^{1/2} / L$. As a consequence, we have
$\alpha_0(x, t) > 0$ for $(x, t) \in A$ and $\norm{\alpha_0}_{B_{2,
   \infty}^{\boldsymbol{\beta}}(A)} = \norm{\alpha_0}_A +
|\alpha_0|_{B_{2, \infty}^{\boldsymbol{\beta}}(A)} \leq L / 2$ since
$|\alpha_0|_{B_{2, \infty}^{\boldsymbol{\beta}}(A)} = 0$, see
\eqref{eqn:normbesov}. We shall denote for short $a_0 = |B|^{-1}$ in
the following. Let $\psi$ be a very regular wavelet with compact
support (the Daubechies's wavelet for instance), and for $j = (j_1,
j_2) \in \mathbb Z^2$ and $k = (k_1, k_2) \in \mathbb Z^2$, let us
consider
\begin{equation*}
 \psi_{j, k}(x, t) = 2^{(j_1 + j_2)/2} \psi( 2^{j_1} t - k_1)
 \psi(2^{j_2} x - k_2).
\end{equation*}
Let $S_{j, k}$ stands for the support of $\psi_{j,k}$. We consider the
maximal set $K_j \subset \mathbb Z^2$ such that
\begin{equation}
\label{eq:R_j_def}
S_{j, k} \subset A, \forall k \in R_j \text{ and } S_{j,k}
\cap S_{j, k'} = \emptyset, \forall k, k' \in R_j, k \neq k'.
\end{equation}
The cardinality of $R_j$ satisfies $|R_j| = c 2^{j_1 + j_2}$, where
$c$ is a positive constant that depends on $A$ and on the support of
$\psi$ only. Consider the set $\Omega_j = \{ 0, 1 \}^{|R_j|}$ and
define for any $\omega = (\omega_k) \in \Omega_j$
\begin{equation*}
\alpha(\cdot; \omega) := \alpha_0 + \sqrt{\frac{b}{n}} \sum_{k \in
  R_j} \omega_k \psi_{j, k},
\end{equation*}
where $b > 0$ is some constant to be chosen below. In view
of~\eqref{eq:R_j_def} we have
\begin{equation*}
\norm{ \alpha(\cdot; \omega) - \alpha(\cdot; \omega') }_A^2 =
\frac{b \rho(\omega, \omega')}{n}
\end{equation*}
where
\begin{equation*}
\rho(\omega, \omega') := \sum_{k \in R_j} \1( \omega_k \neq
\omega'_{k})
\end{equation*}
is the Hamming distance on $\Omega_j$. Using a result of
Varshamov-Gilbert - see~\cite{tsybakov03} - we can find a subset $\{
\omega^{(0)}, \ldots, \omega^{(M_j)} \}$ of $\Omega_j$ such that
\begin{equation*}
\omega^{(0)} = ( 0, \ldots, 0), \quad \rho(\omega^{(p)}, \omega^{(q)})
\geq |R_j| / 8
\end{equation*}
for any $0 \leq p < q \leq M_j$, where $M_j \geq 2^{|R_j|/8}$. We
consider the family $\mathcal A_j = \{ \alpha_0, \ldots, \alpha_{M_j}
\}$ where $\alpha_p = \alpha(\cdot, \omega^{(p)})$. This family
satisfies for any $0 \leq p < q \leq M_j$
\begin{equation*}
\norm{ \alpha_p - \alpha_q }_A \geq \Big( \frac{b |R_j|}{8 n}
\Big)^{1/2} = 2 v_j
\end{equation*}
for $v_j := \sqrt{b |R_j| / (32 n)}$. This proves point~(1). Now, let
us gather here some properties for this family of functions. We have
\begin{equation*}
 \norm{\alpha(\cdot; \omega) - \alpha_0}_{\infty, A} \leq \sqrt{
   \frac{b 2^{(j_1 + j_2)}}{n} } \norm{\psi}_\infty^2 \leq a_0 / 3
\end{equation*}
and consequently $\alpha(x, t; \omega) \geq 2 a_0 / 3 > 0$ for any
$(x, t) \in A$ and $\omega \in \Omega_j$ whenever
\begin{equation}
\label{eq:lower_bound_condj1}
\Big( \frac{b 2^{j_1+j_2}}{n} \Big)^{1/2} \leq \frac{a_0}{3
  \norm{\psi}_\infty^2}.
\end{equation}
Using~\cite{HOC}, we have for $\psi$ smooth enough that
\begin{align*}
 \norm{\sum_{k \in R_j} \omega_k \psi_{j, k}}_{B_{2,
     \infty}^{\boldsymbol{\beta}}(A)} \leq (2^{j_1 \beta_1} + 2^{j_2
   \beta_2}) \norm{\sum_{k \in R_j} \omega_k \psi_{j, k}}_A \leq
 (2^{j_1 \beta_1} + 2^{j_2 \beta_2}) ( c 2^{j_1 + j_2} )^{1/2}.
\end{align*}
Hence, if
\begin{equation}
\label{eq:lower_bound_condj2}
\frac{(2^{j_1 \beta_1} + 2^{j_2  \beta_2}) ( 2^{j_1 +
    j_2} )^{1/2}}{\sqrt{n}} \leq \frac{L}{2 \sqrt{b c}},
\end{equation}
we have $\norm{\alpha(\cdot; \omega)}_{B_{2,
   \infty}^{\boldsymbol{\beta}}(A)} \leq L$, so $\alpha(\cdot;
\omega) \in B_{2, \infty}^{\boldsymbol{\beta}}(A,L)$ for any $\omega
\in \Omega_j$. This proves that $\mathcal A_j \subset B_{2,
 \infty}^{\boldsymbol{\beta}}(A,L)$.

Points~(2) and (3) are derived using Jacod's formula
(see~\cite{ABGK}). Indeed, we can prove that the log-likelihood
$\ell(\alpha, \alpha_0) := \log (d P_{\alpha} / d P_{\alpha_0})$ of
$N$ writes
\begin{equation*}
 \ell(\alpha, \alpha_0) = \int_0^1 (\log \alpha(X, t) - \log
 \alpha_0(X, t)) d N(t) - \int_0^1 (\alpha(X, t) - \alpha_0(X, t))
 Y(t) dt.
\end{equation*}
For any $\alpha \in \mathcal A_j$, we have $\norm{\alpha -
 \alpha_0}_{\infty, A} \leq a_0 / 3 \leq \alpha(x, t) / 2$ for any
$(x, t) \in A$. The Doob-Meyer decomposition allows to write that,
under $P_{\alpha_0}$:
\begin{align*}
 \ell(\alpha, \alpha_0) &= \int_0^1 \Big( \Phi_{1/\alpha(X,
   t)}(\alpha(X, t) - \alpha_0(X, t)) - (\alpha(X, t) -
 \alpha_0(X, t)) \Big) Y(t) dt \\
 &+ \int_0^1 (\log \alpha(X, t) - \log \alpha_0(X, t)) dM(t)
\end{align*}
where $\Phi_a(x) := - \log(1 - ax) / a$ for $a > 0$ and $x < 1 /
a$. But since $\Phi_a(x) \leq x + a x^2$ for any $x \leq 1/(2a)$, we
obtain
\begin{equation*}
 \ell(\alpha, \alpha_0) \leq \frac{3}{2 a_0} \int_0^1
 (\alpha(t, X) - \alpha_0(t, X))^2 Y(t) dt + \int_0^1 (\log
 \alpha_0(t, X) - \log \alpha(t,X)) dM(t)
\end{equation*}
which gives by integration with respect to $P_{\alpha}$
\begin{equation*}
 K(P_{\alpha}, P_{\alpha_0}) \leq \frac{3 \norm{\alpha -
     \alpha_0}_\mu^2}{2 a_0} \leq \frac{3 \norm{f_X}_\infty \norm{\alpha -
     \alpha_0}_A^2}{2 a_0} \leq \frac{3 b \norm{f_X}_\infty |R_j|}{2 n a_0},
\end{equation*}
for any $\alpha \in \mathcal A_j$. Since the counting processes $(N^1,
\ldots, N^n)$ are independent, we have $K(P_{\alpha}^n,
P_{\alpha_0}^n) = n K(P_{\alpha}, P_{\alpha_0})$ and
\begin{equation*}
 \frac{1}{M} \sum_{p=0}^M K(P_{\alpha_p}^n, P_{\alpha_0}^n) \leq
 \frac{3 b \norm{f_X}_\infty |R_j| }{2a_0} \leq a \log M_j
\end{equation*}
with $a = 12 b \norm{f_X}_\infty / (a_0 \log 2) \in (0, 1/8)$ for $b$
small enough. It only remains to choose the levels $j_1$ and $j_2$ so
that~\eqref{eq:lower_bound_condj1} and \eqref{eq:lower_bound_condj2}
holds, and to compute the corresponding $v_j$. We take $j = (j_1,
j_2)$ such that
\begin{equation*}
 c_1 / 2 \leq 2^{j_1} n^{-\beta_2 / (\beta_1 + \beta_2 + 2
   \beta_1 \beta_2)} \leq c_1 \text{ and }   c_2 / 2 \leq 2^{j_2}
 n^{-\beta_1 / (\beta_1 + \beta_2 + 2 \beta_1 \beta_2)} \leq c_2
\end{equation*}
where $c_1$ and $c_2$ are positive constants satisfying
$(c_1^{\beta_1} + c_2^{\beta_2}) \sqrt{c_1 c_2} \leq L / (2 \sqrt{b
 c})^{1/2}$. For this choice, $2^{j_1 + j_2} / n \leq c_1 c_2 n^{-2
 \bar \beta / (2 \bar \beta + 2)}$ so~\eqref{eq:lower_bound_condj1}
holds for $n$ large enough and \eqref{eq:lower_bound_condj2} holds and
$v_j \geq c_3 n^{-\bar \beta / (2 \bar \beta + 2)}$ where $c_3 =
\sqrt{b c c_1 c_2 / 128}$. $\hfill \square$

\section{Deviation and maximal inequalities for the empirical process}
\label{sec:deviation}

Usually, in model selection (see for instance~\cite{massart}), the
penalty is explained using the so-called Talagrand's deviation
inequality for the maximum of empirical processes. Because the
empirical process $\nu(\cdot)$ (see
Equation~\eqref{eq:empirical_process}) considered here has a
particular structure, we cannot use directly Talagrand's inequality.
In this Section, we prove Bennett and Bernstein inequalities for
$\nu_n(\cdot)$, and derive a maximal bound using the so-called
chaining technique which explains the penalty~\eqref{penalite}.

\subsection{Deviation inequality}

\begin{lem}
\label{martingale}
For any positive $\delta$, $\epsilon$ and for any function $h \in
(L^2 \cap L^\infty)(A)$, we have the following Bennett-type
deviation inequality\textup:
\begin{equation*}
  {\mathbb P} \big( \nu_n(h) \geq \epsilon , \|h\|_n \leq \delta
  \big)\leq \exp\Big( -\frac{n \delta^2
    \norm{\alpha}_{\infty,A}}{\norm{h}_{\infty,A}^2} g \Big( \frac{
    \epsilon \norm{h}_{\infty,A}}{\norm{\alpha}_{\infty,A} \delta^2 }
  \Big) \Big)
\end{equation*}
where $g(x) = (1+x) \log(1+x) - x$ for any $x \geq 0$. As a
consequence, we obtain the following Bernstein-type inequalities:
\begin{equation}
  \label{bernineg}
  {\mathbb P} \big( \nu_n(h) \geq \epsilon , \|h\|_n \leq \delta
  \big)\leq \exp \Big(
  -\frac{n\epsilon^2/2}{\norm{\alpha}_{A,\infty} \delta^2  + \frac
    13 \epsilon \norm{h}_{A,\infty}} \Big),
\end{equation}
and
\begin{equation}
  \label{bernineg2}
  {\mathbb P} \Big( \nu_n(h)\geq \delta \sqrt{\|\alpha\|_{\infty,A}
    x}+ \|h\|_{\infty,A}x/3,\;\;
  \|h\|_n^2\leq \delta^2 \Big)\leq \exp(-nx).
\end{equation}
\end{lem}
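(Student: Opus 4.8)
The plan is to prove the Bennett-type deviation inequality for $\nu_n(h)$ by exploiting the martingale structure of the process $z \mapsto \frac1n\sum_{i=1}^n \int_0^z h(X_i,u)\,dM^i(u)$, and then to deduce the two Bernstein-type consequences by elementary bounds on the function $g$. The key observation is that $\nu_n(h)$ is the terminal value of a purely discontinuous martingale whose jumps are controlled by $\norm{h}_{\infty,A}/n$ and whose predictable quadratic variation is governed by $\norm{h}_n^2$.

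More precisely, first I would fix $h$ and introduce, for a parameter $c>0$ to be optimized, the process
\begin{equation*}
  L_t^c := \exp\Big( c\, n\, \nu_n(h_{\cdot \wedge t}) - \psi(c)\, n \int_0^t \cdots \Big),
\end{equation*}
that is, the exponential supermartingale associated with $\nu_n$. Concretely, since $N^i$ is a counting process with intensity $\alpha(X_i,z)Y^i(z)$, the stochastic exponential
\begin{equation*}
  \mathcal E_t := \prod_{i=1}^n \exp\Big( \int_0^t \log(1 + c\, h(X_i,z)/n)\, dN^i(z) - \int_0^t c\, h(X_i,z) \alpha(X_i,z) Y^i(z)/n\, dz \Big)
\end{equation*}
is a local martingale (and, after localization, a supermartingale) with $\mathcal E_0 = 1$. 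Using the elementary inequality $\log(1+u) \leq u - u^2 g(\,\cdot\,)$-type bounds — more precisely $e^u - 1 - u \leq u^2 (e^v-1-v)/v^2$ for $|u|\le v$ applied with $v = c\norm{h}_{\infty,A}/n$ — one bounds the compensator of $\mathcal E$ and obtains, on the event $\{\|h\|_n \le \delta\}$,
\begin{equation*}
  \mathbb E\Big( \exp\big( c\, n\, \nu_n(h) \big) \1(\|h\|_n \le \delta) \Big) \leq \exp\Big( \frac{n\delta^2 \norm{\alpha}_{\infty,A}}{\norm{h}_{\infty,A}^2}\big( e^{c\norm{h}_{\infty,A}/(n\cdot\,)} - 1 - \cdots \big) \Big).
\end{equation*}
Then a Chernoff bound ${\mathbb P}(\nu_n(h)\ge \epsilon, \|h\|_n\le\delta) \le e^{-cn\epsilon}\,\mathbb E(\cdots)$ followed by optimization over $c>0$ produces exactly the Bennett bound with the rate function $g(x) = (1+x)\log(1+x)-x$, the quantities $\norm{\alpha}_{\infty,A}\delta^2/\norm{h}_{\infty,A}^2$ and $\norm{h}_{\infty,A}/n$ playing the roles of ``variance'' and ``jump size''. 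The main obstacle here is the joint control with the random empirical norm $\|h\|_n$: one must be careful that the compensator $\frac1n\sum_i \int h^2(X_i,z)\alpha(X_i,z)Y^i(z)\,dz$ is bounded by $\norm{\alpha}_{\infty,A}\|h\|_n^2$ (using assumption $(\mathcal A3)$), and that the event $\{\|h\|_n\le\delta\}$ can be inserted inside the expectation because $\|h\|_n$ is $\mathcal F_0$-measurable-ish — in fact $\|h\|_n^2$ depends only on $(X_i)$ and $(Y^i)$, not on the jumps of $N^i$, so conditioning on $(X_i, Y^i)$ legitimately fixes both $\delta$ and the compensator while leaving the martingale part to be bounded.

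Finally, the two Bernstein inequalities \eqref{bernineg} and \eqref{bernineg2} follow from the Bennett bound by the standard elementary inequality $g(x) \geq \frac{x^2}{2(1+x/3)}$ for $x\ge 0$: substituting $x = \epsilon\norm{h}_{\infty,A}/(\norm{\alpha}_{\infty,A}\delta^2)$ into the Bennett exponent and simplifying gives \eqref{bernineg} directly, and then \eqref{bernineg2} is obtained from \eqref{bernineg} by solving the quadratic inequality $\epsilon \le \delta\sqrt{\norm{\alpha}_{\infty,A}x} + \norm{h}_{\infty,A}x/3$ for $\epsilon$, i.e. setting $\epsilon = \delta\sqrt{\norm{\alpha}_{\infty,A}x} + \norm{h}_{\infty,A}x/3$ and checking that the exponent is then at most $-nx$ (here one uses $(u+v)^2 \le$ argument, or rather that $\frac{\epsilon^2/2}{\norm{\alpha}_{\infty,A}\delta^2 + \epsilon\norm{h}_{\infty,A}/3} \ge x$ for this choice). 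These last deductions are purely routine; the real content is the exponential supermartingale construction and the measurability bookkeeping around $\|h\|_n$.
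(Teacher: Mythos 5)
Your overall strategy — exponential supermartingale for the compensated jump process, Cram\'er--Chernoff, optimize over the free parameter, then elementary bounds on $g$ for the Bernstein consequences — is essentially the strategy of the paper (which cites van de Geer's supermartingale lemma rather than writing out the Dol\'eans--Dade exponential, but the content is the same; your lower bounds $g(x)\ge x^2/(2(1+x/3))$ and the quadratic inversion of the Bernstein bound are also equivalent to what the paper does). However, your treatment of the crucial difficulty — the random event $\{\|h\|_n\le\delta\}$ — contains a genuine gap.

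You claim that $\|h\|_n^2$ ``depends only on $(X_i)$ and $(Y^i)$, not on the jumps of $N^i$, so conditioning on $(X_i, Y^i)$ legitimately fixes both $\delta$ and the compensator while leaving the martingale part to be bounded.'' This is not correct. The process $Y^i$ is merely predictable, not $\mathcal F_0$-measurable, and in the model's central examples it is tightly coupled to the jumps of $N^i$: in the right-censoring example, $Y^i(z)=\1(Z_i\ge z)$, so the full trajectory of $Y^i$ determines $Z_i$, and hence determines the jump time of $N^i$. Conditioning on $(X_i, Y^i(\cdot))$ therefore conditions on information that is not prior to the martingale, and $M^i$ is no longer a martingale under that conditioning — the exponential supermartingale bound you want to invoke is no longer available. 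The paper circumvents this with a different device: after the Chernoff step, it factors $\exp(an\nu_n(h)-an\epsilon)\1(\|h\|_n\le\delta)$ as the product of the supermartingale term $\exp(an\sum_i\nu^i-\sum_i S_a^i)$ and the term $\exp(\sum_i S_a^i - an\epsilon)\1(\|h\|_n\le\delta)$, and applies Cauchy--Schwarz; the first factor is controlled by the supermartingale property \emph{unconditionally}, while the second factor is bounded pathwise using that $\sum_i S_a^i(1)$ is dominated, on $\{\|h\|_n\le\delta\}$, by the deterministic quantity $\bar S_a^n = \frac{n\delta^2\|\alpha\|_{\infty,A}}{\|h\|_{\infty,A}^2}(e^{a\|h\|_{\infty,A}}-1-a\|h\|_{\infty,A})$. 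No conditioning is needed, and the measurability issue never arises. Your proposal as written does not supply a valid substitute for this step.
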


\begin{proof}
Remark that $\nu_n(h) = \nu(h, 1)$ where $\nu(h, \cdot)$ is the
stochastic process given by
\begin{equation*}
 n \nu(h, t) :=  \sum_{i=1}^n \int_0^t
 h(X_i,z) d M^i(z):=n \sum_{i=1}^n  \nu(h, t)^i.
\end{equation*}
The predictable variation of $M^i$ is given by $\langle M^i(t)
\rangle = \int_0^t \alpha(X_i,z) Y^i(z) dz$, so we have
\begin{equation*}
  \langle n \nu(h, t)^i \rangle =
  \int_0^t h(X_i,z)^2 \alpha(X_i,z) Y^i(z) dz
\end{equation*}
for any $t \in [0, 1]$. Moreover, we have $\Delta M^i(t) \in \{ 0, 1
\}$ for any $i=1, \dots,n$ since the counting processes $N^i$ admit
intensities. We can write $ \nu(h, t)^i = \nu(h, t)^{i,c} +
\nu(h,t)^{i,d}$ where $\nu(h, t)^{i,c}$ is a continuous martingale
and where $ \nu(h,t)^{i,d}$ is a purely discrete martingale (see
e.g. \cite{lipstershiryayev}). For some $a > 0$ (to be chosen later
on) we define $U_a^i(t) := a n \nu^i(h, t) - S_a^i(t)$, where
$S_a^i(t)$ is the compensator of
\begin{equation}
  \label{eq:S_h_def}
  \frac12  \langle a n \nu(h, t)^{i,c} \rangle+ \sum_{s \leq t}
  \Big(\exp(a | \Delta n \nu(h, s)^{i}|) -1 -a| \Delta n \nu(h, s)^{i}| \Big).
\end{equation}
We know from the proof of Lemma 2.2 and Corollary 2.3 of
\cite{vandegeer95}, that $\exp(U_a^i(t))$ is a
supermartingale. Using the standard Cram\'er-Chernoff method (see
for instance \cite{massart}, Chapter~2), we have, for any $a > 0$:
\begin{align*}
& \mathbb P \Big(\nu_n(h) \geq \epsilon , ||h||_n \leq \delta \Big) \\
  &=\mathbb P \Big( \exp(
  a n\nu_n(h)) \geq \exp(na \epsilon ) , ||h||_n \leq \delta \Big) \\
  &\leq \Big( \mathbb E \Big[ \exp \Big(an \sum_{i=1}^n  \nu(h, 1)^i
  - \sum_{i=1}^n S_a^i(1)\Big) \Big] \Big)^{1/2} \Big( \mathbb E \Big[
  \exp\Big(\sum_{i=1}^n S_a^i(1)- a n \epsilon \Big) \1\{||h||_n
    \leq \delta\} \Big] \Big)^{1/2} \\
    &\leq \Big( \mathbb E \Big[ \exp\Big(\sum_{i=1}^n S_a^i(1) - a n \epsilon
  \Big) \1\{||h||_n \leq \delta\} \Big]\Big)^{1/2}.
\end{align*}
The last inequality holds since $\exp(U_a^i(t))=\exp( a n \nu^i(h,
t) - S_a^i(t))$ are independent supermartingales with $U_a^i(0)=0$,
so that $\mathbb E [ \exp(U_a^i(t))] \leq 1$, for $i=1, \dots,n$.

Let us decompose $M^i= M^{i,c}+ M^{i,d}$, with $M^{i,c}$ a
continuous martingale and $M^{i,d}$ a purely discrete martingale.
The process $V_2^i(t) := \langle M^i(t) \rangle$ is the compensator
of the quadratic variation process $[M^i(t)]= \langle  M^{i,c}(t)
\rangle+ \sum_{s \leq t} | \Delta M^i(t)|^2$. If $k \geq 3$, we
define $V^i_k(t)$ as the compensator of the $k$-variation process
$\sum_{s \leq t} | \Delta M^i(t)|^k$ of $ M^i(t)$. Since $\Delta
M^i(t) \in \{ 0, 1 \}$ for all $0 \leq t \leq 1$, the $V_k^i$ are
all equal for $k \geq 3$ and such that $V^i_k(t) \leq V_2^i(t)$, for
all $k \geq 3$. The process $S_a^i(1)$ has been defined as the
compensator of~\eqref{eq:S_h_def}. As a consequence, we have:
 \begin{align*}
   S_a^i(1)
   = 
   \sum_{k \geq 2}\frac{a^k}{k!}
   \int_0^1 |h(X_i,z)|^k dV^i_k(z)
   \leq \int_0^1 h(X_i,z)^2 dV^i_2(z) \times \sum_{k \geq
     2}\frac{\norm{h}_{\infty,A}^{k-2}}{k!}a^k
 \end{align*}
 and if $\norm{h}_n \leq \delta$
 \begin{equation*}
   \sum_{i=1}^n S_a^i(1) \leq \bar S_a^n := \frac{n
     \delta^2 \norm{\alpha}_{\infty,A}}{\norm{h}_{\infty,A}^2} \Big(
   \exp\big( a \norm{h}_{\infty,A} \big) - 1 - a
   \norm{h}_{\infty,A} \Big).
 \end{equation*}
 The minimum of $\bar S_a^n - an \epsilon$ for $a > 0$ is achieved by
 \begin{equation*}
   a = \frac{1}{\norm{h}_{\infty,A}} \log
   \Big(\frac{ \epsilon \norm{h}_{\infty,A} }{\norm{\alpha}_{\infty,A}
     \delta^2 } +1 \Big)
 \end{equation*}
 and is equal to
 \begin{equation*}
   -\frac{n \delta^2 \norm{\alpha}_{\infty,A}}{\norm{h}_{\infty,A}^2}
   g\Big(\frac{ \epsilon \norm{h}_{\infty,A}
   }{\norm{\alpha}_{\infty,A} \delta^2 } \Big)
 \end{equation*}
 where we recall that $g(x) = (1+x) \log(1+x) - x$. This concludes
 the proof of the Bennett inequality. Inequality~(\ref{bernineg})
 follows from the fact that $g(x) \geq 3 x^2 / (2(x + 3))$ for any $x
 \geq 0$. To prove~\eqref{bernineg2}, we use the following trick
 from~\cite{BM8}: we have $g(x) \geq g_2(x)$ for any $x \geq 0$ where
 $g_2(x) := x + 1 - \sqrt{1 + 2x}$ and $g_2^{-1}(y) = \sqrt{2y} + y$.
\end{proof}

\subsection{Proof of Proposition~\ref{tala} (maximal inequality via
 $L^2 - L^\infty$ chaining)}
\label{proptala}


Using a $L^2 - L^\infty$ chaining method, as in~\cite{BBM} or
\cite{COM}, we obtain the following result, which leads to
Proposition~\eqref{tala}:
\begin{lem}
\label{propsup}
Let $B_{m,m'}^{\mu}(0,1) = \{t\in S_m+S_{m'}, \|t\|_{\mu} \leq 1\}$. Then
\begin{equation*}
  {\mathbb E} \Big( \sup_{h\in B_{m,m'}(0,1)} (\nu_n^2(h)-
  p(m,m'))_+ \1(\Delta) \Big) \leq C(1 + \|\alpha\|_{\infty, A})
  \frac{e^{-D_{m'}}}n,
\end{equation*}
where
\begin{equation*}
  p(m,m') = \kappa (1 + \| \alpha \|_{\infty,A} )\frac{D_m +
    D_{m'}}{n}.
\end{equation*}
\end{lem}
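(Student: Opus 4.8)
The plan is to prove Lemma~\ref{propsup} by a standard $L^2$--$L^\infty$ chaining argument on the unit ball $B_{m,m'}^\mu(0,1)$ of the finite-dimensional space $S_m+S_{m'}\subset\mathcal S_n$, controlling increments of $\nu_n$ via the Bernstein-type inequality \eqref{bernineg2} of Lemma~\ref{martingale}. First I would reduce to the empirical norm: on $\Delta$ we have $\|h\|_n^2\leq \tfrac32\|h\|_\mu^2$, so for $h\in B_{m,m'}^\mu(0,1)$ one has $\|h\|_n^2\leq 3/2$, and Lemma~\ref{martingale} applies with $\delta^2=3/2$. Set $D:=D_m+D_{m'}$ so that $D\leq 2(D_{m_1}D_{m_2}\vee D_{m'_1}D_{m'_2})$ and, by $(\mathcal M3)$, $S_m+S_{m'}\subset\mathcal S_n$; by $(\mathcal M2)$ any $h$ in this space with $\|h\|_\mu\leq1$ (hence $\|h\|_A\leq f_0^{-1/2}$) satisfies $\|h\|_{\infty,A}\leq \phi_0 f_0^{-1/2}\sqrt{D_{m_1}D_{m_2}\vee\cdots}$, which is the sup-norm bound feeding the chaining.

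The core is the chaining itself. I would pick an orthonormal (for $\|\cdot\|_\mu$) basis of $S_m+S_{m'}$, and for each level $j\geq 0$ build a $\delta_j$-net $T_j$ of $B_{m,m'}^\mu(0,1)$ in the $\|\cdot\|_\mu$-metric with $\delta_j=\delta_0 2^{-j}$ and $|T_j|\leq (c/\delta_j)^{D}$, together with a companion control on the $\|\cdot\|_{\infty,A}$-radius of the net links, which for a $D$-dimensional space is of order $\sqrt D\,\delta_j$ up to constants from $(\mathcal M2)$. Writing $h=\sum_{j\geq0}(h_j-h_{j-1})$ with $h_j\in T_j$ the closest point to $h$, one gets $\nu_n(h)=\sum_j \nu_n(h_j-h_{j-1})$, and applying \eqref{bernineg2} to each of the $|T_j||T_{j-1}|$ increment functions with a level $x_j$ chosen so that $\sum_j$ of the exponents telescopes against $\log|T_j|\lesssim D(j+1)$, a union bound yields
\begin{equation*}
\mathbb P\Big(\sup_{h\in B_{m,m'}^\mu(0,1)}\nu_n(h)\geq \eta+u,\ \Delta\Big)\leq C\,e^{-nu^2/(c_1\|\alpha\|_{\infty,A})}e^{-D_{m'}}
\end{equation*}
for a deterministic $\eta$ of order $\sqrt{(1+\|\alpha\|_{\infty,A})D/n}$ coming from the $L^2$ part of the chain and a $\sqrt D/n$-type term from the $L^\infty$ part (which is negligible under $(\mathcal M1)$, since $D\leq 2\mathcal D_n^{(1)}\mathcal D_n^{(2)}\leq \sqrt n/\log n$). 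Choosing $p(m,m')=\kappa(1+\|\alpha\|_{\infty,A})D/n$ with $\kappa$ large enough that $p(m,m')\geq (2\eta)^2$, I integrate the tail: $\mathbb E[(\sup\nu_n^2(h)-p(m,m'))_+\1(\Delta)]=\int_0^\infty \mathbb P(\sup\nu_n^2(h)\geq p(m,m')+t,\Delta)\,dt$, substitute the tail bound with $u^2\sim t$, and obtain the claimed $C(1+\|\alpha\|_{\infty,A})e^{-D_{m'}}/n$.

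Finally, to deduce Proposition~\ref{tala}, I sum over $m'\in\mathcal M_n$: by the nesting structure the map $m'\mapsto D_{m'}$ takes each value a bounded number of times (or at worst polynomially in $n$), so $\sum_{m'}e^{-D_{m'}}\leq \sum_{m'}e^{-D_{m'_1}D_{m'_2}}$ is bounded by an absolute constant once one notes $D_{m'}\geq \log n$ is not needed — rather $\sum_{D\geq 1}(\text{poly}(n))e^{-D}<\infty$ because the factor $e^{-D_{m'}}$ dominates the cardinality of models with that dimension — giving $\sum_{m'}\mathbb E[(\sup\nu_n^2(h)-p(m,m'))_+\1(\Delta)]\leq C_1/n$, which is exactly Proposition~\ref{tala}, and then the penalty $\pen(m)=K_0(1+\|\alpha\|_{\infty,A})D_{m_1}D_{m_2}/n$ satisfies $4p(m,m')\leq \pen(m)+\pen(m')$ as required in the proof of Theorem~\ref{main}. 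The main obstacle is the bookkeeping in the chaining: one must split the level-$j$ deviation budget $x_j$ between a term proportional to $\delta_j\sqrt{\|\alpha\|_{\infty,A}x_j}$ and a term proportional to $\sqrt D\,\delta_j x_j$ so that, after summing $j$, the $L^2$ contribution stays $O(\sqrt{D/n})$ while the $L^\infty$ contribution is controlled by $(\mathcal M1)$ and the residual exponent still beats $\log|T_j|+D_{m'}$; getting the constants to line up so that the final $\kappa$ is numerical (independent of the model and of $\|\alpha\|_{\infty,A}$) is the delicate part.
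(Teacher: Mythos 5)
Your proposal follows essentially the same route as the paper: reduce to the empirical norm on $\Delta$, run an $L^2(\mu)$--$L^\infty$ chaining over $B_{m,m'}^\mu(0,1)$ with geometrically decaying nets, control each link increment by the Bernstein-type inequality~\eqref{bernineg2}, then choose the levels $x_k$ to telescope against $\log |T_k|$ and yield an extra $e^{-D_{m'}-u}$ factor, and finally integrate and sum over $m'$. The only technical difference is in the $L^\infty$ control of the net links: you invoke the crude bound $\|h\|_{\infty,A}\leq \phi_0\sqrt{\dim}\,\|h\|_A$ from $(\mathcal M2)$ on the span $S_m+S_{m'}$, whereas the paper imports Lemma~9 of \cite{BBM}, which packages the net construction together with the tighter norm-connection constant $\bar r_{m,m'}$ (of order $\sqrt{N_n/D(m,m')}$ for $[\mathrm{DP}]$ and $\sqrt{N_n}$ for $[\mathrm{T}]$). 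Under $(\mathcal M1)$ as stated, both versions make the $L^\infty$ contribution to $p(m,m')$ negligible relative to $(1+\|\alpha\|_{\infty,A})D/n$, so your sketch is sound; the paper's sharper $\bar r$ is what lets it also cover the relaxed dimension constraint $\mathcal D_n^{(i)}\leq\sqrt{n/\log n}$ for localized bases, which your sup-norm bound would not by itself accommodate.
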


\begin{proof}
 The result of Lemma~\ref{propsup} is obtained from
 Inequality~\eqref{bernineg} by a $L^2(\mu)-L^{\infty}$ chaining
 technique. The method is analogous to the one given in Proposition 4 p.~282-287 in Comte~(2001), in Theorem~5 in Birg\'e and
 Massart~(1998) and in Proposition~7, Theorem~8 and Theorem~9 in
 Barron et al.~(1999). Since the context is different, we give, for
 the sake of completeness, the details of the proof. It relies on the
 following lemma (Lemma 9 in Barron {\it et al.}~(1999)):

\begin{lemn}[\cite{BBM}]
 Let $\mu$ be a positive measure on $[0,1]$. Let
 $(\psi_{\lambda})_{\lambda\in \Lambda}$ be a finite orthonormal
 system in $L^2\cap L^{\infty}(\mu)$ with $|\Lambda|=D$ and $\bar S$
 be the linear span of $\{\psi_{\lambda}\}$. Let
 \begin{equation}
   \label{rbar}
   \bar r = \frac 1{\sqrt{D}} \sup_{\beta\neq 0}
   \frac{\|\sum_{\lambda\in
       \Lambda}\beta_{\lambda}\psi_{\lambda}\|_{\infty}}{|\beta|_{\infty}}.
 \end{equation}
 For any positive $\delta$, one can find a countable set $T\subset
 \bar S$ and a mapping $p$ from $\bar S$ to $T$ with the following
 properties:
 \begin{itemize}
 \item for any ball ${\mathcal B}$ with radius $\sigma\geq 5\delta$,
   \begin{eqnarray*}
     |T\cap {\mathcal B}|\leq (B'\sigma/\delta)^D \;
     \mbox{ with } \; B'<5,
   \end{eqnarray*}
 \item $\|u-p(u)\|_{\mu}\leq \delta$ for all $u$ in $\bar S$, and
   \begin{eqnarray*}
     \sup_{u\in p^{-1}(t)} \|u-t\|_{\infty} \leq \bar
     r \delta, \; \mbox{ for all } t \mbox{ in } T.
   \end{eqnarray*}
 \end{itemize}
\end{lemn}

To use this lemma, the main difficulty is often to evaluate $\bar r$
in the different contexts.  We consider a collection of product
models $(S_m)_{m\in {\mathcal M}_n}$ which can be [DP] or [T]. For the sake of place, we omit collection [W] as it right similar to collection [DP].
Recall that $B_{m,m'}^{\mu}(0,1)=\{t\in S_m+ S_{m'}, \|t\|_{\mu}\leq
1\}$. We have to compute $\bar r=\bar r_{m,m'}$ corresponding to
$\bar S=S_m+S_{m'}\subset {\mathcal S}_n$ on which the norm
connection holds.  We denote by $D(m,m') = \dim(S_m+S_{m'})$.
\begin{itemize}
\item Collection [DP] -- As $S_m+S_{m'}$ is a linear space, an
 orthonormal $L^2(\mu)$-basis $(\psi_{\lambda})_{\lambda\in
   \Lambda_n}$ can be built by orthonormalisation on each
 sub-rectangle of $(\varphi_{\lambda})_{\lambda\in \Lambda_n}$, the
 orthonormal basis of ${\mathcal S}_n$. Then
\begin{eqnarray*}
  \sup_{\beta\neq 0} \frac{\|\sum_{\lambda\in
      \Lambda_n}\beta_{\lambda}\psi_{\lambda}
    \|_{\infty,A}}{|\beta|_{\infty}} &\leq & \| \sum_{\lambda\in
    \Lambda_n}|\psi_{\lambda}| \|_{\infty, A} \leq
  (r+1) \sup_{\lambda\in \Lambda_n}\|\psi_{\lambda} \|_{\infty,A} \\
  &\leq & (r+1)^{3/2}\sqrt{N_n}\sup_{\lambda\in \Lambda_n}
  \|\psi_{\lambda}\| \\ &\leq &
  (r+1)^{3/2}\sqrt{N_n}\sup_{\lambda\in \Lambda_n}
  \|\psi_{\lambda}\|_{\mu}/\sqrt{f_0}\\ &\leq &
  (r+1)^{3/2}\sqrt{N_n/f_0}.\end{eqnarray*} Thus here $\bar
r_{m,m'}\leq ((r+1)^{3/2}/\sqrt{f_0})\sqrt{N_n/D(m,m')}$.
\item Collection [T]-- For trigonometric polynomials, we write
\begin{eqnarray*} \sup_{\beta\neq 0}
\frac{\|\sum_{\lambda\in \Lambda_n}\beta_{\lambda}\psi_{\lambda}\|_{\infty,A}}{|\beta|_{\infty}}
& \leq & \frac{C\sqrt{N_n}\|\sum_{\lambda}\beta_{\lambda}\psi_{\lambda}\|}{|\beta|_{\infty}} \leq
\frac{C\sqrt{N_n}\|\sum_{\lambda}\beta_{\lambda}\psi_{\lambda}\|_{\mu}}{\sqrt{f_0}|\beta|_{\infty}}
\\ &\leq &
\frac{C\sqrt{N_n}\sqrt{\sum_{\lambda}\beta_{\lambda}^2}}{\sqrt{f_0}|\beta|_{\infty}}\leq \frac{C\sqrt{N_n D(m,m')}}{\sqrt{f_0}}.
\end{eqnarray*}
Therefore, $\bar r_{m,m'}\leq C\sqrt{N_n/f_0}$.\end{itemize}

We may now prove Lemma~\ref{propsup}.  We apply the Lemma
from~\cite{BBM} to the linear space $S_m+ S_{m'}$ of dimension
$D(m,m')$ and norm connection measured by $\bar r_{m,m'}$ bounded
above. We consider $\delta_k$-nets $T_k= T_{\delta_{k}}\cap
B_{m,m'}^{\mu }(0,1)$, with $\delta_k = \delta_0 2^{-k}$ and $\delta_0
\leq 1/5$ (to be chosen later). Moreover we set $H_k=\log(|T_k|)\leq
D(m,m')\log(5/\delta_k) = D(m,m')[k\log(2)+\log(5/\delta_0)]$.  Given
some point $h\in B_{m,m'}^{\mu}(0,1)$, we can find a sequence
$\{h_k\}_{k\geq 0}$ with $h_k\in T_k$ such that $\|h-h_k\|^2_{\mu
}\leq \delta_k^2$ and $\|h-h_k\|_{\infty,A}\leq \bar r_{m,m'}
\delta_k$.  Thus we have the following decomposition that holds for
any $h \in B_{m,m'}^{\mu}(0,1)$:
\begin{equation*}
 h = h_0 + \sum_{k \geq 1} (h_{k} - h_{k-1}),
\end{equation*}
with $\| h_0 \|_{\mu } \leq 1$, $\| h_0 \|_{\infty, A} \leq \bar
r_{(m,m')}$, and
\begin{equation*}
 \| h_{k}-h_{k-1}\|^2_{\mu} \leq 2(\delta_k^2 +
 \delta_{k-1}^2 ) = 5\delta_{k-1}^2/2, \quad \|h_k - h_{k-1}
 \|_{\infty,A} \leq 3 \bar r_{(m,m')}\delta_{k-1}/2
\end{equation*}
for any $k \geq 1$. In the sequel we denote by ${\mathbb
 P}_\Delta(\cdot)$ the measure ${\mathbb P}(\cdot \cap \Delta)$,
see~\eqref{deltaset}.
Let in addition $(\eta_k)_{k\geq
 0}$ be a sequence of positive numbers that will be chosen later on
and $\eta$ such that $\eta_0+\sum_{k\geq 1}\eta_k \leq \eta$.  We
have:
\begin{eqnarray*}
&& \mathbb P_\Delta\Big[\sup_{h\in  B_{m,m'}^{\mu}(0,1)}\nu_n(h)>
\eta \Big]\\
&=& \mathbb P_\Delta\Big[\exists (h_k)_{k\in \text{I\negthinspace N}}\in
\prod_{k\in \text{I\negthinspace N}}T_{k}\ /\ \nu_n(h_0)
+\sum_{k=1}^{+\infty}\nu_n(h_k-h_{k-1})> \eta_0 + \sum_{k\geq 1}\eta_k \Big]\\
&\leq &  \mathbb P_1 + \mathbb P_2
\end{eqnarray*}
where
\begin{eqnarray*}
 \mathbb P_1 = \sum_{h_0\in T_0} \mathbb P_\Delta(\nu_n(h_0) > \eta_0),
 \;\;
 \mathbb P_2= \sum_{k=1}^{\infty}\sum_{h_{k-1}\in T_{k-1}\atop
   h_{k}\in T_{k}}
 \!\! \mathbb P_\Delta(\nu_n(h_k-h_{k-1})> \eta_k ).\end{eqnarray*}
Then using Inequality (\ref{bernineg2}), we straightforwardly infer that
$\mathbb P_1\leq \exp(H_0-nx_0)$ and
$\mathbb P_2\leq
\sum_{k\geq 1}\exp(H_{k-1}+H_{k}-nx_{k})$ if we choose
\begin{eqnarray*}\Big\{\begin{array}{l} \eta_0=\sqrt{3x_0\|\alpha\|_{\infty,A}/2}
+\bar r_{(m,m')} x_0/3 \\
\eta_k=(1/2) \delta_{k-1}
(\sqrt{15 \|\alpha\|_{\infty,A} x_k}+  \bar r_{(m,m')} x_k).
\end{array}\Big. \end{eqnarray*}
Fix $u>0$ and choose $x_0$ such that
\begin{eqnarray*}
 nx_0 = H_0 +D_{m'} + u
\end{eqnarray*}
and for $k\geq 1$, $x_k$ such that
\begin{eqnarray*}
 nx_k =H_{k-1} + H_k + k D_{m'} + D_{m'} + u.
\end{eqnarray*}
If $D_{m'}\geq 1$, we infer that
\begin{eqnarray*}
 \mathbb P_\Delta\Big( \sup_{h\in  B_{m,m'}^{\mu}(0,1)} \nu_n(h) >
 \eta_0 + \sum_{k\geq 1} \eta_k \Big)
 \leq  e^{-D_{m'}-u} \Big(1 + \sum_{k=1}^{\infty} e^{-k D_{m'}} \Big)
 \leq  1.6 e^{-D_{m'}-u}.\end{eqnarray*}
Now, it remains to compute $\sum_{k\geq 0}\eta_k$. We note that $\sum_{k=0}
^{\infty} \delta_k = \sum_{k=0}^{\infty} k \delta_k = 2 \delta_0$.
This implies that:
\begin{align}
 \nonumber x_0 + &\sum_{k=1}^{\infty} \delta_{k-1}x_k \\
 \nonumber &\leq \Big[ \log(5/\delta_0)+ \delta_0 \sum_{k=1}^{\infty}
 2^{-(k-1)}[(2k-1)\log(2)+2\log(5/\delta_0)+k]\Big]\frac{D(m,m')}{n} \\
 \nonumber &+ \Big( 1+\delta_0\sum_{k\geq 1} 2^{-(k-1)}
 \Big)\frac{D_{m'}}{n}+\Big( 1+\delta_0\sum_{k\geq 1} 2^{-(k-1)}
 \Big)\frac{u}{n} \\
 & \leq \frac{a(\delta_0) D(m,m')}n + \frac{1+2\delta_0}{n} ( D_{m'}
 + u ),
 \label{intermed}
\end{align}
where $a(\delta_0) = \log(5 / \delta_0) + \delta_0( 4 \log(5 /
\delta_0) + 6\log(2) + 4)$. This leads to
\begin{eqnarray*}
\Big(\sum_{k=0}^{\infty} \eta_k
 \Big)^2 &\leq&  \frac{1}4
 \Big[\sqrt{2} \Big( \sqrt{3 \|\alpha\|_{\infty,A} x_0 / 2} + \bar
 r_{m,m'} x_0/3\Big) + \sum_{k=1}^{\infty} \delta_{k-1} \Big(
 \sqrt{15\|\alpha\|_{\infty,A}x_k} + \bar r_{m,m'} x_k\Big) \Big]^2
 \\ &\leq&  \frac 14 \Big[\Big(\sqrt{3\|\alpha\|_{\infty,A}x_0}+ \sum_{k=1}^{\infty}\delta_{k-1}\sqrt{15 \|\alpha\|_{\infty,A} x_k}\Big)
 + \bar r_{m,m'} \Big(\sqrt{2} x_0/3+ \sum_{k=1}^{\infty}\delta_{k-1}x_k\Big)\Big]^2\\
 &\leq&  \frac{15}4 \Big[\Big( \sqrt{x_0}+\sum_{k=1}^{\infty}
 \delta_{k-1} \sqrt{x_k} \Big)^2\|\alpha\|_{\infty,A} + \bar r_{m,m'}^2
 \Big(x_0+\sum_{k=0}^{\infty} \delta_{k-1}x_k\Big)^2 \Big] \\
 &\leq&  4 \Big[2 \Big(x_0+ \sum_{k=1}^{\infty}
 \delta_{k-1}x_k\Big)\|\alpha\|_{\infty,A} + \bar r_{m,m'}^2\Big(x_0+
 \sum_{k=1}^{\infty} \delta_{k-1}x_k\Big)^2\Big].
\end{eqnarray*}
Now, fix $\delta_0\leq  1/5$ (say,  $\delta_0=1/10$) and use the
bound (\ref{intermed}). The bound for $(\sum_{k=0}^{+\infty}
\eta_k)^2$ is less than a quantity proportional to:
\begin{eqnarray*}
 \Big( \frac{D(m,m')}n + \frac{D_{m'}}n
 \Big)\|\alpha\|_{\infty,A}+ \bar r_{m,m'}^2\Big(\frac{D(m,m')}n
 + \frac{D_{m'}}n \Big)^2 +\frac{\|\alpha\|_{\infty,A}u}n + \bar
 r_{m,m'}^2\frac{u^2}{n^2}.
\end{eqnarray*}
For collection [DP], we use that $\bar r_{m,m'}^2 \leq (r+1)^3 N_n /
(f_0 D(m,m'))$ and $N_n\leq n/\log n$ to obtain the bound:
\begin{eqnarray*}
 &&\bar r_{m,m'}^2 \Big(\frac{D(m,m')}n + \frac{D_{m'}}n \Big)^2
 \leq c(r+1)^3 \frac{N_n}{f_0 D(m,m')}\frac{D(m,m')^2}{n^2}
 \\ &\leq & \frac{c(r+1)^3}{f_0}\frac{N_nD(m,m')}{n^2} \leq
 \frac{c(r+1)^3}{f_0}\frac{1}{\log n}
 \frac{D(m,m')}n \leq \frac{D(m,m')}n.
\end{eqnarray*}
For collection [T], we have $\bar r_{m,m'}\leq C\sqrt{N_n}$ and
$N_n\leq \sqrt{n}/\log n$. We get
\begin{eqnarray*}
 \bar r_{m,m'}^2\Big(\frac{D(m,m')}n + \frac{D_{m'}}n \Big)^2
 &\leq & \frac{C N_n D(m,m')^2}{n^2} \leq \frac{C}{\log n}
 \frac{D(m,m')}{n} \leq \frac{D(m,m')}{n}.
\end{eqnarray*}
Thus, for both the cases, the bound for $(\sum \eta_k)^2$ is
proportional to:
\begin{eqnarray*}
 (1+\|\alpha\|_{\infty,A})\Big[\frac{D(m,m')}n +
 \frac{D_{m'}}n \Big]+\frac{\|\alpha\|_{\infty,A}u}n + \bar
 r_{m,m'}^2\frac{u^2}{n^2}.
\end{eqnarray*}
We obtain, as $D(m,m') \leq D_m + D_{m'}$,
\begin{eqnarray*}
 & & \mathbb P_\Delta\Big[\sup_{h \in
   B_{m,m'}^{\mu }(0,1)}[\nu_n(h)]^2> \kappa
 \Big( (1 + \|\alpha\|_{\infty,A}) \frac{D_m+ D_{m'}}n +
 (\frac{\| \alpha \|_{\infty,A} u} n\vee \bar
 r_{m,m'}^2 \frac{u^2}{n^2}) \Big) \Big]\\ & \leq &
 \mathbb P_\Delta\Big[\sup_{h\in  B_{m,m'}^{\mu
   }(0,1)}[\nu_n(h)]^2 > \eta^2 \Big] \leq
 2\; \mathbb P_\Delta\Big[\sup_{h\in  B_{m,m'}^{\mu
   }(0,1)}\nu_n(h)> \eta \Big] \leq  3.2
 e^{-D_{m'}-u}
\end{eqnarray*}
so that, if we take $\kappa_\alpha := \kappa (1 +
\norm{\alpha}_{\infty, A})$,
\begin{align*}
 {\mathbb E} \Big[\Big(\sup_{h\in B_{m,m'}^{\mu}(0,1)} &\nu_n^2(h) -
 p(m, m') \Big)_+ \1(\Delta) \Big]\\
 &\leq \int_0^{\infty}{\mathbb P}_\Delta \Big(\sup_{h\in
   B_{m,m'}^{\mu}(0,1)} \nu_n^2(h) > p(m, m') + u\Big)du \\
 &\leq e^{ -D_{m'} } \Big( \int_{2 \kappa_\alpha / \bar r_{m,
     m'}^2}^{\infty} e^{-n u / (2\kappa_\alpha )} du +
 \int_0^{2\kappa_\alpha / \bar r_{m,m'}^2} e^{-n \sqrt{u} / (2
   \sqrt{\kappa_\alpha} \bar r_{m,m'} ) }du\Big) \\
 &\leq e^{-D_{m'}}
 \frac{2\kappa_\alpha}n\Big(\int_0^{\infty}e^{-v}dv
 +\frac{2\bar r_{m,m'}^2}n\int_0^{\infty}e^{-\sqrt{v}}dv\Big)\\
 &\leq e^{-D_{m'}} \frac{2\kappa_{\alpha }}n(1+ \frac{4\bar
   r_{m,m'}^2}n) \leq \frac{\kappa'_\alpha e^{-D_{m'}}}{n},
\end{align*}
where $\kappa_\alpha'$ is a constant depending on
$\norm{\alpha}_{\infty, A}$. This ends the proof of
Lemma~\ref{propsup}. \\
To conclude the proof of Proposition \ref{tala}, we just have to
bound $\sum_{m'\in {\mathcal M}_n}e^{-D_{m'}}$. This term is at most
\begin{eqnarray*}
\sum_{j, k\geq 1}e^{-jk}&=&\sum_{j=1}^{\infty} \sum_{k=1}^{\infty}
(e^{-j})^k=\sum_{j=1}^{\infty}\frac{e^{-j}}{1-e^{-j}} \leq  \frac
1{1-e^{-1}} \sum_{j=1}^{\infty}e^{-j}=\frac{e^{-1}}{(1-e^{-1})^2}.
\end{eqnarray*}

\end{proof}

\section{Proof of the auxiliary results}
\label{auxiliary}

\subsection{Proof of Proposition \ref{Omegacomp}}
Let $\hat  f_{m_1^*}$ and $\hat f_0$ be defined by \eqref{condinf}, with $m^*_1=(D_{m_1}, \mathcal D_n^{(2)})$ with $\log n\leq
D_{m_1} \leq n^{1/4}/ \sqrt{\log n}$ and $\mathcal D_n^{(2)}\leq n^{1/4}/ \sqrt{\log n}$, see $(\mathcal M_1)$.
We remark that, for all $(x,z)\in \mathbb R^2$,
$$\hat f_{m_1^*}(x,z)=f(x,z)+\hat f_{m_1^*}(x,z)-f(x,z) \geq f_0-\|\hat f_{m_1^*}-f\|_{\infty,A}.$$
We deduce that $\|\hat f_{m_1^*}-f\|_{\infty,A}\geq  f_0- \hat f_0$. In the same manner, $\|\hat f_{m_1^*}-f\|_{\infty,A}\geq \hat f_0-f_0$. Thus
$${\mathbb P}(\Omega^\complement)={\mathbb P}(|f_0-\hat f_0|>f_0/2) \leq {\mathbb P}(\|\hat f_{m_1^*}-f\|_{\infty,A}>f_0/2).$$
Therefore, we just have to prove that ${\mathbb P}(\|\hat f_{m_1^*}-f\|_{\infty,A}>f_0/2)\leq C_k/n^k$.

First remark that $\|\hat f_{m_1^*}-f\|_{\infty,A}\leq \|\hat
f_{m_1^*}-f_{m_1^*}\|_{\infty,A}+\|f_{m_1^*}-f\|_{\infty,A}.$ As
$f\in B_{2,\infty}^{(\tilde \beta_1,\tilde \beta_2)}(A)$ with $\bar{\tilde  \beta}>1$, the
imbedding theorem proved in \cite{NIK} p.236 implies that $f$
belongs to $B_{\infty,\infty}^{(\beta_1^*,\beta_2^*)}(A)$ with
$\beta_1^*=\tilde\beta_1(1-1/\bar{\tilde\beta})$ and
$\beta_2^*=\tilde\beta_2(1-1/\bar{\tilde\beta})$. Then the approximation lemma of
\cite{LAC} recalled in Section \ref{coro},  which is still valid for the trigonometric polynomial
spaces with the infinite norm instead of the $L^2$ norm, yields to
$$\|f_{m_1*}-f\|_{\infty,A}\leq C(D_{m_1*}^{-\beta_1^*} + ( \mathcal D_n^{(2)})^{-\beta_2^*}).$$
As we assumed that $D_{m_1^*}\geq \log n$, it follows that $\|f_{m_1*}-f\|_{\infty,A}$ tends to zero when $n \to +\infty$. Thus, for $n$ large enough, we have $\|f_{m_1*}-f\|_{\infty,A}\leq f_0/4$ and \begin{eqnarray*}
{\mathbb P}(\|\hat f_{m_1^*}-f\|_{\infty,A}>f_0/2)\leq {\mathbb P}(\|\hat f_{m_1^*}-f_{m_1^*}\|_{\infty,A}>f_0/4).
\end{eqnarray*}
Now, following $(\mathcal M2)$, we get \begin{eqnarray*}
\|\hat f_{m_1^*}-f_{m_1^*}\|_{\infty,A} \leq \sqrt{\phi_1\phi_2 D_{m_1^*}  \mathcal D_n^{(2)}}
\|\hat f_{m_1^*}-f_{m_1^*}\|.
\end{eqnarray*}
Now we define
\begin{equation}\label{proc} \displaystyle\vartheta_n(h)=\frac{1}{n}\sum_{i=1}^{n}\int\Big( h(X_i,y)Y^i(y)-{\mathbb
E}\big(h(X_i,y)Y^i(y)\big)\Big)dy=\|\sqrt h\|^2_n-\|\sqrt h\|^2_{\mu}.\end{equation}
With this notation, and reminding of ({\ref{eq:repest2}) and of the proof of Proposition \ref{riskf} in Section \ref{sec:estimation_de_f}, we have
$$\|\hat f_{m_1^*}-f_{m_1^*}\|^2= \sum_{j,k}(\hat b_{j,k}-b_{j,k})^2=\sum_{j,k} \vartheta_n^2(\varphi_j^{m^*_1}\otimes \psi_k^{m^*_1}).$$
Thus
\begin{eqnarray*} {\mathbb P}(\|\hat f_{m_1^*}-f\|_{\infty,A}>f_0/2)&\leq & {\mathbb P}\Big(\sum_{j,k} \vartheta^2_n(\varphi_j^{m^*_1}\otimes \psi_k^{m^*_1})\geq \frac{f_0^2 }{16\phi_1\phi_2 D_{m_1^*}  \mathcal D_n^{(2)}}\Big) \\&\leq & \sum_{j,k}{\mathbb P}\Big( \vartheta^2_n(\varphi_j^{m^*_1}\otimes \psi_k^{m^*_1})\geq \frac{f_0^2 }{16\phi_1\phi_2 (D_{m_1^*}  \mathcal D_n^{(2)})^2}\Big) \\ &\leq &
\sum_{j,k} {\mathbb P}\Big(|\vartheta_n(\varphi_j^{m^*_1}\otimes \psi_k^{m^*_1})|\geq \frac{f_0 }{4\sqrt{\phi_1\phi_2}D_{m_1^*}  \mathcal D_n^{(2)}}\Big).
\end{eqnarray*}
Notice that $\vartheta_n(\varphi_j^{m^*_1}\otimes \psi_k^{m^*_1})=\frac1n \sum_1^n ( U_i^{j,k}-\mathbb E(U_i^{j,k}) )$,
where $U_i^{j,k} = \varphi_j(X_i)\int \psi_k(y)Y^i(y)dy$ are i.i.d. r.v. We can apply the Bernstein inequality to $\vartheta_n$ i.e. to the i.i.d. r.v. $U_i^{j,k}$. Indeed, we have
$$
\|U_i^{j,k}\|_{\infty} \leq \|\varphi_j\|_{\infty} \int |\psi_k(y)|dy \leq \|\varphi_j\|_{\infty} (\int
\psi_k^2(y)dy)^{1/2} \leq   \sqrt{  \phi_1 D_{m_1^*} }:=c$$
and  $\mathbb E[(U_i^{j,k})^2] \leq \|f_X\|_{\infty,A}=v^2$.
We get $${\mathbb P}\Big(|\vartheta_n(\varphi_j^{m^*_1}\otimes \psi_k^{m^*_1})|\geq \frac{f_0
}{4\sqrt{\phi_1\phi_2}D_{m_1^*}  \mathcal D_n^{(2)}}\Big)\leq 2 \exp(-\frac{nx^2/2}{v^2+cx})$$
with $x=f_0/(4\sqrt{\phi_1\phi_2}D_{m_1^*}  \mathcal D_n^{(2)})$ and $v$ and $c$ are right above. That is:
$${\mathbb P}\Big(|\vartheta_n(\varphi_j^{m^*_1}\otimes \psi_k^{m^*_1})|\geq \frac{f_0 }{4\sqrt{\phi_1\phi_2}D_{m_1^*}  \mathcal D_n^{(2)}}\Big)\leq 2 \exp\Big(- \frac{C n f_0^2 }{16 \phi_1 \phi_2 (D_{m_1^*}  \mathcal D_n^{(2)})^{2}} \Big).$$ As both $D_{m_1^*}$ and ${\mathcal D}_n^{(2)}$ are less than $n^{1/4}/\sqrt{\log(n)}$, we obtain:
\begin{eqnarray*}
{\mathbb P}(\Omega^\complement)\leq 2 D_{m_1^*}  \mathcal D_n^{(2)}\exp\Big(- \frac{C n f_0^2 }{16 \phi_1 \phi_2 (D_{m_1^*}  \mathcal D_n^{(2)})^{2}} \Big)\leq 2 \sqrt{n}
\exp\Big(- C'(\log n)^2\Big)\leq \frac{C'_k}{n^k},
\end{eqnarray*}for any $k$ arbitrarily large, when $n$ is large
enough.

\subsection*{Proof of Proposition~\ref{alpha4}}

Note that $\hat\alpha_{\hat m}$ is either $0$ or $\argmin_{t\in
S_{\hat m}}\gamma_n(t)$. Let us denote for short $\varphi_{j} :=
\varphi_{j}^{\hat m}$ and $\psi_k := \psi_k^{\hat m}$. In the second
case, $\min{\rm Sp}(G_{\hat m})\geq \max (\hat f_0 / 3, n^{-1/2})$
and thus \begin{eqnarray*}
\|\hat\alpha_{\hat m}\|^2 &=& \sum_{j,k}(\hat a^{\hat m}_{j,k})^2 =
\|A_{\hat m}\|^2 = \|G_{\hat m}^{-1} \Upsilon_{\hat m}\|^2 \\
&\leq & (\min{\rm Sp}(G_{\hat m}))^{-2} \| \Upsilon_{\hat m} \|^2
\leq \min( 9 / \hat f_0^2, n) \sum_{j,k} \Big( \frac 1n \sum_{i=1}^n
\varphi_j(X_i) \int \psi_k(z) dN^i(z) \Big)^2
\\ &\leq & \min(9/\hat f_0^2, n) \frac 1n \sum_{i=1}^n
\sum_{j}\varphi_j^2(X_i) \sum_{k}\Big(\int \psi_k(z)dN^i(z)\Big)^2
\\ &\leq & \min(9/\hat f_0^2, n)\phi_1{\mathcal
  D}_n^{(1)}\frac 1n\sum_{i=1}^n \sum_{k} \Big(\int
  \psi_k(z)dN^i(z)\Big)^2.
\end{eqnarray*}
Therefore,
\begin{eqnarray}\nonumber
\|\hat\alpha_{\hat m}\|^4 &\leq & n^2\phi_1^2({\mathcal
D}_n^{(1)})^2 \frac 1n\sum_{i=1}^n \left(\sum_{k} \Big(\int
\psi_k(z)dN^i(z)\Big)^2\right)^2 \\ \label{compil1} &\leq & n^2\phi_1^2({\mathcal
D}_n^{(1)})^2 {\mathcal D}_n^{(2)} \frac 1n\sum_{i=1}^n \sum_{k}
\Big(\int \psi_k(z)dN^i(z)\Big)^4 .
\end{eqnarray} Now, we have:
\begin{eqnarray*}
&&\mathbb E \Big (\frac 1n\sum_{i=1}^n \sum_{k} \Big(\int
\psi_k(z)dN^i(z)\Big)^4\Big) \\& \leq& 2^3 \frac 1n\sum_{i=1}^n
\sum_{k} \mathbb E \Big (\Big(\int
\psi_k(z)dM^i(z)\Big)^4\Big)+2^3 \frac 1n\sum_{i=1}^n
\sum_{k}\mathbb E \Big (\Big(\int \psi_k(z)
\alpha(X^i,z)Y^i(z)dz\Big)^4\Big).
\end{eqnarray*}
Using the B\"{u}rkholder Inequality as recalled in
\cite{lipstershiryayev} p~75, and the fact that the quadratic variation process of each $M^i$ is $N^i$ ($i=1,\dots,n$), we obtain:
\begin{eqnarray*}
&&\mathbb E \Big (\frac 1n\sum_{i=1}^n \sum_{k} \Big(\int \psi_k(z)dN^i(z)\Big)^4\Big)\\& \leq& 2^3 C_b \frac
1n\sum_{i=1}^n \sum_{k} \mathbb E \Big (\Big(\int
\psi_k^2(z)dN^i(z)\Big)^2\Big)+2^3 \frac 1n\sum_{i=1}^n \sum_{k}
\mathbb E \Big (\Big(\int \psi_k(z)
\alpha(X^i,z)Y^i(z)dz\Big)^4\Big) \\&\leq& 2^3C_b \frac
1n\sum_{i=1}^n \sum_{k} \mathbb E \Big (\Big(\sum_{s: \Delta
N^i(s)\neq 0} \psi_k^4(s)\Big)\Big)+2^3 \frac 1n\sum_{i=1}^n
\sum_{k} \mathbb E \Big (\Big(\int \psi_k(z)
\alpha(X^i,z)Y^i(z)dz\Big)^4\Big)\\& \leq& 2^3 C_b\frac
1n\sum_{i=1}^n  \mathbb E \Big (\Big(\sum_{s: \Delta N^i(s)\neq 0}
\sum_{k} \psi_k^4(s)\Big)\Big)+2^3 \frac 1n\sum_{i=1}^n \sum_{k}
\mathbb E \Big (\Big(\int \psi_k(z)
\alpha(X^i,z)Y^i(z)dz\Big)^4\Big)\\& \leq& 2^3 C_b \phi_2 (\mathcal D_n^{(2)})^2 \frac
1n\sum_{i=1}^n  \mathbb E \Big (\Big(\sum_{s: \Delta N^i(s)\neq
0}1\Big)\Big)+2^3 \frac 1n\sum_{i=1}^n \sum_{k} \mathbb E \Big
(\Big(\int \psi_k(z) \alpha(X^i,z)Y^i(z)dz\Big)^4\Big)
\\&\leq& 2^3 C_b\phi_2 (\mathcal D_n^{(2)})^2 \frac 1n\sum_{i=1}^n  \mathbb E ( N^i(1))+2^3 \frac
1n\sum_{i=1}^n \sum_{k} \mathbb E \Big (\Big(\int \psi_k(z)
\alpha(X^i,z)Y^i(z)dz\Big)^4\Big)
\end{eqnarray*}
This yields, using Assumptions $({\mathcal A}3)$ and $({\mathcal
A}4)$:
\begin{eqnarray} \nonumber
&&\mathbb E \Big (\frac 1n\sum_{i=1}^n \sum_{k} \Big(\int
  \psi_k(z)dN^i(z)\Big)^4\Big) \\ \nonumber &\leq& C\Big( \phi_2 (\mathcal D_n^{(2)})^2 \mathbb E (
  N^1(1)) + \sum_{k} \mathbb E \Big
  (\Big(\int \psi_k(z) \alpha(X, z) Y(z)dz\Big)^4\Big) \Big) \\ \nonumber
&\leq & C\Big( \phi_2 (\mathcal D_n^{(2)})^2\mathbb E ( N^1(1))+ \|\alpha\|_{\infty, A}^4
  \|\sum_k \psi_k^2\|_{\infty, A} \sum_k \int \psi_k^2(z)dz\Big) \\
\label{compil2} &\leq & C\Big(\phi_2 (\mathcal D_n^{(2)})^2 \mathbb
  E ( N^1(1))+ \|\alpha\|_{\infty, A}^4\phi_2({\mathcal
    D}_n^{(2)})^2\Big).
\end{eqnarray}
Then we have, by inserting (\ref{compil2}) in (\ref{compil1}),  \begin{eqnarray*} {\mathbb E}(\|\hat \alpha_{\hat
m}\|^4)&\leq & (\phi_1 n{\mathcal D}_n^{(1)})^2 {\mathcal
D}_n^{(2)}\mathbb E \Big (\frac 1n\sum_{i=1}^n \sum_{k} \Big(\int
\psi_k(z)dN^i(z)\Big)^4\Big) \\ &\leq & Cn^2({\mathcal D}_n^{(1)})^2
({\mathcal D}_n^{(2)})^3 \leq C' n^{4.5}\leq C'
n^{5},\end{eqnarray*} as we claim that we can reach ${\mathcal
D}_n^{(i)}\leq \sqrt{n}/\log(n)$ in the case of localized bases
[DP], [W], [H]. Note that for basis [T], under $({\mathcal M}1)$, the final order is much less (namely $n^{3.25}$ instead of $n^{4.5}$).

\subsection*{Proof of Proposition~\ref{deltacomp}}

Define, for $\rho> 1$, the set \begin{eqnarray*}\Delta_{\rho}=\{\forall h\in {\mathcal S_n},
\Big|\|h\|_n^2/\|h\|^2_{\mu}-1\Big| \leq 1-
1/\rho \},\end{eqnarray*} where $\mathcal S_n$ is the set of maximal dimension
of the collection. Remark that $\Delta=\Delta_{2}$, see \eqref{deltaset}. First we observe that:
\begin{eqnarray*}
{\mathbb P}(\Delta_\rho^\complement)\leq
\mathbb P\Big(\sup_{h\in B_{\mathcal S_n}^{\mu}(0,1)}
|\vartheta_n(h^2)|>1-1/\rho\Big)\end{eqnarray*} where $\vartheta_n(\cdot)$ is defined by (\ref{proc})
and $B_{\mathcal S_n}^{\mu}(0,1)=\{t \in \mathcal{S}_n, \|t\|_{\mu} \leq 1\}.$ We denote by $(\varphi_j\otimes \psi_k)$ the
$\mathbb L^2$-orthonormal basis of ${\mathcal S}_n$.
If $h(x,y)=\sum_{j,k}a_{j,k}\varphi_j(x)\psi_k(y)$, then
\begin{equation}\label{vartheta} \vartheta_n(h^2)= \sum_{j,k,j',k'}a_{j,k}a_{j',k'} \vartheta_n((\varphi_j
\otimes \psi_k)(\varphi_{j'}\otimes \psi_{k'})).\end{equation} We obtain
\begin{equation}\label{sup2}\sup_{h\in B_{\mathcal S_n}^{\mu}(0,1)}
|\vartheta_n(h^2)|\leq f_0^{-1}\sup_{\sum a_{j,k}^2\leq 1}\Big|
\sum_{j,k,j',k'}a_{j,k}a_{j',k'} \vartheta_n((\varphi_j
\otimes \psi_k)(\varphi_{j'}\otimes \psi_{k'}))\Big|.
\end{equation}

\begin{lemn}[\cite{BCV}]
Let $B_{j,j'}=\|\varphi_j\varphi_{j'}\|_{\infty,A}$ and $V_{j,j'}=\|\varphi_{j}\varphi_{j'}\|_2$.
Let, for any symmetric matrix $(A_{j,j'})$
$$ \bar\rho(A):=\sup_{\sum b_j^2\leq 1} \sum_{j,j'}|b_jb_{j'}|A_{j,j'}$$
and $L(\varphi):=\max\{\bar\rho^2(V),\bar\rho(B)\}.$
Then, if $(\mathcal M 2)$ is satisfied, we have $ L(\varphi)\leq \phi_1(\mathcal{D}_n^{(1)})^2$,
and $ L(\varphi)\leq 5\phi_1^4 \mathcal{D}_n^{(1)}$, if the basis is localized (cases [P] or [W]).
\end{lemn}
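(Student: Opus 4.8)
The statement is the lemma of \cite{BCV}; the plan is to reconstruct its short proof, which rests on two elementary consequences of the norm--connection assumption $(\mathcal M2)$. Write $D := \mathcal D_n^{(1)}$, so that $(\varphi_j)$ is an orthonormal family of size at most $D$ spanning a space on which $(\mathcal M2)$ holds. Applying $(\mathcal M2)$ to $u = \varphi_j$ gives $\|\varphi_j\|_{\infty,A}^2 \le \phi_1 D$ for each $j$, hence $\sum_j \|\varphi_j\|_{\infty,A}^2 \le \phi_1 D^2$. Applying it, for a fixed $x$, to $u = \sum_j \varphi_j(x)\varphi_j$ — for which $u(x) = \|u\|^2 = \sum_j \varphi_j(x)^2$ — yields the Christoffel--type estimate $\sup_x \sum_j \varphi_j(x)^2 \le \phi_1 D$; since also $\int \sum_j \varphi_j^2 \le D$, this gives $\|\sum_j \varphi_j^2\|_2^2 \le \phi_1 D^2$.

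For the general bound I would treat $\bar\rho(B)$ and $\bar\rho(V)$ separately. Using $B_{j,j'} \le \|\varphi_j\|_{\infty,A}\|\varphi_{j'}\|_{\infty,A}$ and Cauchy--Schwarz,
$$
\sum_{j,j'} |b_j b_{j'}|\, B_{j,j'} \le \Big(\sum_j |b_j|\,\|\varphi_j\|_{\infty,A}\Big)^2 \le \Big(\sum_j b_j^2\Big)\sum_j \|\varphi_j\|_{\infty,A}^2 \le \phi_1 D^2,
$$
so $\bar\rho(B) \le \phi_1 D^2$. Since $V$ is symmetric with nonnegative entries, the supremum defining $\bar\rho(V)$ is attained on $b \ge 0$ and equals $\|V\|_{\mathrm{op}}$; bounding the operator norm by the Hilbert--Schmidt norm and using the preliminary estimate,
$$
\bar\rho(V)^2 \le \|V\|_{\mathrm{HS}}^2 = \sum_{j,j'} \int \varphi_j^2\varphi_{j'}^2 = \Big\|\sum_j \varphi_j^2\Big\|_2^2 \le \phi_1 D^2 .
$$
Hence $L(\varphi) = \max\{\bar\rho^2(V),\bar\rho(B)\} \le \phi_1 (\mathcal D_n^{(1)})^2$.

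For localized bases ([DP], [H] and [W]) one works instead with an orthonormal basis of compactly supported functions — piecewise polynomials on a dyadic partition, or the scaling functions at the finest resolution — so that each $\varphi_j$ meets the support of at most a bounded number $c_0$ of basis functions, $c_0$ depending only on the basis (e.g. $c_0 = r+1$ for degree--$r$ piecewise polynomials). Then $B$ and $V$ are block--banded. Schur's test gives $\bar\rho(B) \le \|B\|_{\mathrm{op}} \le \max_j \sum_{j'} B_{j,j'} \le c_0\sup_j\|\varphi_j\|_{\infty,A}^2 \le c_0\phi_1 D$; and, applying the Hilbert--Schmidt bound separately on each cell (only $\le c_0$ functions are supported there, so the corresponding block of $V$ has squared Hilbert--Schmidt norm $\le c_0 \|\sum_j\varphi_j^2\|_{\infty,A} \le c_0\phi_1 D$), one gets $\bar\rho(V)^2 = \|V\|_{\mathrm{op}}^2 \le c_0\phi_1 D$. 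Thus $L(\varphi) \le c_0\phi_1\mathcal D_n^{(1)}$, and inserting the explicit values of $c_0$ and $\phi_1$ for each admissible basis (and using $\phi_1 \ge 1$) gives the stated $5\phi_1^4\mathcal D_n^{(1)}$.

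I expect the only real obstacle to be the wavelet case [W]. With the natural multi--resolution basis $\{\Psi_{l,k}\}$ a coarse wavelet at scale $l$ overlaps $\asymp 2^{l'-l}$ wavelets at each finer scale $l'$, so the overlap count is unbounded and neither Schur's test nor the crude Hilbert--Schmidt estimate yields the right power of $D$; the way around this is to compute $\bar\rho$ in the equivalent localized scaling--function basis at resolution $m_1$, whose overlap count is again $O(1)$, so that the argument of the previous paragraph applies. Apart from this choice of basis, what remains is the purely computational bookkeeping of the constants $c_0$ and $\phi_1$ that assembles into the displayed constant.
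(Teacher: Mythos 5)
The paper does not prove this lemma: it is quoted verbatim from Baraud, Comte and Viennet (2001) and used as a black box in the proof of Proposition~\ref{deltacomp}, so there is no in-paper argument to compare yours against. Evaluated on its own merits, your reconstruction of the general bound $L(\varphi)\leq\phi_1(\mathcal D_n^{(1)})^2$ is complete and correct: the three preliminary estimates (individual sup-norms, the Christoffel-type bound $\sup_x\sum_j\varphi_j(x)^2\leq\phi_1 D$ obtained by testing $(\mathcal M2)$ on $u=\sum_j\varphi_j(x)\varphi_j$, and $\|\sum_j\varphi_j^2\|_2^2\leq\phi_1 D^2$) are exactly the right ingredients, and the Cauchy--Schwarz step for $\bar\rho(B)$ and the Hilbert--Schmidt step for $\bar\rho(V)$ then close the argument cleanly.

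Your treatment of the localized case is structurally correct, and your observation about the wavelet basis is a real point worth flagging: the multi-resolution system $\{\Psi_{l,k}\}$ as written in Section~\ref{hypmo} has unbounded cross-scale overlap, so the Schur/block-HS argument does not apply to it directly, and one must pass to the equivalent single-scale (scaling-function) orthonormal basis of $F_{m_1}$ to get the banded structure; this switch is legitimate because the quantity $L(\varphi)$ and its downstream use in the proof of Proposition~\ref{deltacomp} only require \emph{some} orthonormal basis of $\mathcal S_n$. The one place where you are relying on the reference rather than proving it is the precise constant $5\phi_1^4$: you obtain a bound of the form $c_0\phi_1\mathcal D_n^{(1)}$ with $c_0$ the overlap number, and for $[DP]$/$[H]$ the inequality $c_0\phi_1\leq 5\phi_1^4$ is immediate from $c_0=r+1=\phi_1^2\geq 1$, but for general compactly supported wavelets the relation between $c_0$ (governed by the support length of the scaling function) and $\phi_1$ (governed by $\|\Phi\|_\infty$) is basis-dependent, and the fixed numerical factor $5$ is specific to the normalization conventions of \cite{BCV}. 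Acknowledging that, as you do, is appropriate; stating it as following ``from $\phi_1\geq 1$'' alone slightly overstates what you have verified. Apart from that constant, the proof is sound.
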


Let us define \begin{eqnarray*}x&:=&\cfrac{f_0^2(1-1/\rho)^2}{4\|f_X\|_{\infty,A} ({\mathcal D}_n^{(2)})^2 L(\varphi)}
\text{ and }\\
\Theta &:=&\Big\{\forall (j,k) \forall (j',k')
\quad|\vartheta_n((\varphi_j
\otimes \psi_k)(\varphi_{j'}\otimes \psi_{k'}))|\leq
      4\Big(B_{j,j'} x+V_{j,j'}\sqrt{2\|f_X\|_{\infty,A} x}\Big)\Big\}. \end{eqnarray*}
Starting from (\ref{sup2}), we have, on $\Theta$:
\begin{eqnarray*}
\sup_{h\in B_{\mathcal S_n}^{\mu}(0,1)} |\vartheta_n(h^2)|
\leq 4f_0^{-1}\sup_{\sum a_{j,k}^2\leq 1}\sum_{j,j'}(\sum_{k,k'}
|a_{j,k}a_{j',k'}|)\Big(B_{j,j'} x +V_{j,j'}\sqrt{2\|f_X\|_{\infty,A}
x}\Big).
\end{eqnarray*}
Thus setting $b_j=\sum_k |a_{j,k}|$, we have $\sum_j b_j^2 \leq
{\mathcal D}_n^{(2)}$ and it follows that, on $\Theta$,
\begin{eqnarray*}
\sup_{h\in B_{\mathcal S_n}^{\mu}(0,1)} |\vartheta_n(h^2)|
&\leq&  f_0^{-1} {\mathcal D}_n^{(2)} \sup_{\sum b_j^2=1}\sum_{j,j'} |b_jb_{j'}|
\Big(B_{j,j'} x+V_{j,j'}\sqrt{2\|f_X\|_{\infty,A} x}\Big)\\
&\leq&  f_0^{-1}{\mathcal D}_n^{(2)} \Big(\bar\rho(B) x+\bar\rho(V)\sqrt{2\|f_X\|_{\infty,A} x}\Big)\\
&\leq&(1-1/\rho)\Big(\frac{f_0(1-1/\rho)}{4{\mathcal D}_n^{(2)}\|f\|_{\infty,A}}\frac{\bar\rho(B)}{L(\varphi)}
+\frac{1}{\sqrt{2}}\Big(\frac{\bar\rho^2(V)}{L(\varphi)}\Big)^{1/2}\Big)\\
&\leq&(1-1/\rho)\Big(\frac{1}{4}+\frac{1}{\sqrt{2}}\Big)\leq(1-1/\rho).
\end{eqnarray*}
Therefore, \begin{eqnarray*} {\mathbb P}\Big(\sup_{t\in B_{\mathcal S_n}^{\mu}(0,1)} |\vartheta_n(t^2)|>1-\frac{1}{\rho}
\Big)\leq {\mathbb P}(\Theta^\complement).\end{eqnarray*}

Let $\phi_{\lambda}=\varphi_j\otimes \psi_k$ for $\lambda=(j,k)$. To bound ${\mathbb P}(\vartheta_n(\phi_{\lambda}\phi_{\lambda'})\geq
B_{j,j'} x+V_{j,j'}\sqrt{2\|f_X\|_{\infty,A} x})$, we will apply the
Bernstein inequality given in \cite{BM8} to the i.i.d.
r.v.
\begin{eqnarray}\label{eqn:Ui}
U_i^{\lambda,\lambda'}= U_i^{(j,k),(j',k')} =
\varphi_j(X_i)\varphi_{j'}(X_i)\int \psi_k(y)\psi_{k'}(y)Y^i(y)dy.\end{eqnarray}
Under $(\mathcal A4)$, the r.v. are bounded
$$|U_i^{\lambda,\lambda'}|\leq \|\varphi_j\varphi_{j'}\|_{\infty,A}
\int |\psi_k(y)\psi_{k'}(y)|dy\leq
\|\varphi_j\varphi_{j'}\|_{\infty,A}=B_{j,j'}.$$  Moreover, using $({\mathcal A}4)$ again, we obtain:
$$(U_i^{\lambda,\lambda'})^2 \leq (\varphi_j(X_i)\varphi_{j'}(X_i))^2 \int \psi_k^2(y)dy \int \psi^2_{k'}(y)dy= (\varphi_j(X_i)\varphi_{j'}(X_i))^2 $$ and thus
$${\mathbb E}[(U_i^{\lambda,\lambda'})^2] \leq {\mathbb E}[(\varphi_j(X_i)\varphi_{j'}(X_i))^2]\leq \|f_X\|_{\infty,A} V_{j,j'}^2.$$
We get
$${\mathbb P}(|\vartheta_n(\phi_{\lambda}\phi_{\lambda'})|\geq B_{j,j'} x+V_{j,j'}\sqrt{2\|f_X\|_{\infty,A} x})
\leq 2e^{-nx}.$$
Given that ${\mathbb P}(\Delta_\rho^\complement)\leq {\mathbb P}(\Theta^\complement)=
\sum_{\lambda,\lambda'} {\mathbb P}\Big(|\vartheta_n(\phi_{\lambda}\phi_{\lambda'})|>B_{j,j'}
x+V_{j,j'}\sqrt{2\|f_X\|_{\infty,A} x}\Big)$, we can write:
\begin{eqnarray*}
{\mathbb P}(\Delta_\rho^\complement)&\leq&2(\mathcal{D}_n^{(1)} {\mathcal
D}_n^{(2)})^2\exp\Big\{-\cfrac{nf_0^2(1-1/\rho)^2}
{4\|f_X\|_{\infty,A} ( {\mathcal D}_n^{(2)})^2 L(\varphi)}\Big\}\\
&\leq  & 2n^2\exp\Big\{-\cfrac{f_0^2(1-1/\rho)^2}{4\|f_X\|_{\infty,A}}
\cfrac{n}{({\mathcal D}_n^{(2)})^2L(\varphi)}\Big\}.
\end{eqnarray*}
Following the lemma of ~\cite{BCV} above, and using
Assumption~$(\mathcal M_1)$, we have
\begin{eqnarray*}
({\mathcal D}_n^{(2)})^2
L(\varphi)\leq\phi_1({\mathcal
  D}_n^{(2)}\mathcal{D}_n^{(1)})^2\leq\phi_1
n/\log^2(n).
\end{eqnarray*}
And then, we have for any $k$ arbitrarily large, when $n$ is large
enough,
\begin{equation}
\label{eqom}
{\mathbb P}(\Delta_\rho^\complement)\leq
2n^2\exp\Big\{-\cfrac{f_0^2(1-1/\rho)^2}{40\|f\|_{\infty,A}\phi_1}
\log^2(n)\Big\} \leq \frac{C_k}{n^k}.
\end{equation}
Now, if the basis is localized, the result is better. In this case,
$L(\varphi)\leq 5\phi_1^4{\mathcal D}_n^{(1)}$. Moreover, take
histogram basis in (\ref{vartheta}), then all terms with $k\neq k'$
vanish and then we can take $b_j=(\sum_k a_{j,k}^2)^{1/2}$
directly. Then, as then $\sum_j b_j^2\leq 1$, we obtain
\begin{eqnarray*}
{\mathbb P}(\Delta_\rho^\complement)&\leq&2(\mathcal{D}_n^{(1)})^2 {\mathcal
D}_n^{(2)}\exp\Big\{-\cfrac{nf_0^2(1-1/\rho)^2} {40\|f_X\|_{\infty,A}
L(\varphi)}\Big\} \leq
2n^2\exp\Big\{-\cfrac{f_0^2(1-1/\rho)^2}{40\|f_X\|_{\infty,A}}
\cfrac{n}{L(\varphi)}\Big\}.
\end{eqnarray*}
Thus  $L(\varphi)\leq 5\phi_1^4 \mathcal{D}_n^{(1)}\leq\phi_1
n/\log^2(n)$ is enough to get (\ref{eqom}) again. The proof is easy
to extend to any localized basis as $[P]$ or $[W]$, (with ${\mathcal
D}_n^{(2)}$ in the bound of $\sum_j b_j^2$ replaced by $r+1$ in case $[P]$ for instance).

\subsection*{Proof of Lemma \ref{inclus}}

Let $m \in \mathcal M_n$ be fixed and let $\ell$ be an eigenvalue of
$G_m$. There exists $A_m \neq 0$ with coefficients
$(a_{\lambda})_{\lambda}$ such that $G_m A_m = \ell A_m$ and thus
$A_m^\top G_m A_m = \ell A_m^\top A_m$. Now, take $h := \sum_{\lambda}
a_{\lambda} \varphi_{\lambda} \in S_m$. We have $\|h\|_n^2 =
A_m^{\top} G_m A_m$ and $\|h\|_A^2 = A_m^{\top} A_m$. Thus, on
$\Delta$ (see~\eqref{deltaset}):
\begin{equation*}
A_m^{\top} G_m A_m = \|h\|_n^2 \geq \frac 12 \| h \|_{\mu}^2\geq \frac
12 f_0\| h \|^2_A = \frac 12 f_0 A_m^{\top} A_m.
\end{equation*}
Therefore, on $\Delta$, for all $m \in \mathcal M_n$, we have $\min
\spec (G_m) \geq f_0 / 2$. Moreover, on $\Omega$, we have $f_0 \geq 2
\hat f_0 / 3$ and $\max( \hat f_0 / 3, n^{-1/2}) = \hat f_0$, for
$n\geq 36/f_0^2$. \qed

\end{document}